\setlist[itemize]{leftmargin=30pt, itemsep=2pt}
\setlist[enumerate]{leftmargin=30pt, itemsep=2pt}
\definecolor{mylinkcolor}{RGB}{0,0,255}
\definecolor{mycitecolor}{RGB}{169,169,169}
\definecolor{myurlcolor}{RGB}{255,20,147}
\definecolor{mybiburlcolor}{RGB}{80,80,80}
\newtheorem{theorem}{Theorem}
\numberwithin{theorem}{section}
\numberwithin{equation}{section}
\newtheorem{proposition}[theorem]{Proposition}
\newtheorem{coro}[theorem]{Corollary}  
\newtheorem{lemma}[theorem]{Lemma}
\newtheorem{conjecture}[theorem]{Conjecture}
\theoremstyle{definition}\newtheorem{defn}[theorem]{Definition}
\theoremstyle{definition}\newtheorem{remark}[theorem]{Remark}  
\theoremstyle{definition}\newtheorem{question}[theorem]{Question}
\theoremstyle{definition}
\crefname{proposition}{Proposition}{Propositions}
\crefname{coro}{Corollary}{Corollaries}
\crefname{lemma}{Lemma}{Lemmas}
\crefname{conjecture}{Conjecture}{Conjectures}
\crefname{theorem}{Theorem}{Theorems}
\crefname{question}{Question}{Questions}
\crefname{defn}{Definition}{Definitions}
\crefname{remark}{Remark}{Remarks}
\crefname{equation}{equation}{equations}
\Crefname{equation}{Equation}{Equations}
\newcommand{\RR}{\mathbb{R}} 
\newcommand{\ZZ}{\mathbb{Z}}
\newcommand{\QQ}{\mathbb{Q}} 
\newcommand{\CC}{\mathbb{C}} 
\newcommand{\PP}{\mathbb{P}} 
\newcommand{\cE}{\mathcal{E}}
\newcommand{\cF}{\mathcal{F}} 
\newcommand{\cM}{\mathcal{M}} 
\newcommand{\cB}{\mathcal{B}} 
\newcommand{\cA}{\mathcal{A}}
\newcommand{\GG}{\mathbb{G}} 
\newcommand{\cO}{\mathcal{O}}
\newcommand{\cH}{\mathcal{H}} 
\newcommand{\ove}{\overline{e}} 
\newcommand{\ovf}{\overline{f}} 
\newcommand{\ovd}{\overline{d}} 
\newcommand{\overbar}[1]{\mkern 1.5mu\overline{\mkern-1.5mu#1\mkern-1.5mu}\mkern 1.5mu}
\DeclareMathOperator{\codim}{codim} 
\DeclareMathOperator{\rank}{rk} 
\DeclareMathOperator{\Spec}{Spec}
\DeclareMathOperator{\Proj}{Proj} 
\DeclareMathOperator{\Sym}{Sym} 
\DeclareMathOperator{\Tsch}{Tsch} 
\DeclareMathOperator{\Hom}{Hom} 
\DeclareMathOperator{\Bun}{Bun} 
\DeclareMathOperator{\polytope}{\mathcal{P}} 
\DeclareMathOperator{\qpolytope}{\mathcal{Q}} 
\DeclareMathOperator{\ev}{ev} 
\DeclareMathOperator{\End}{End}
\DeclareMathOperator{\pr}{pr}
\DeclareMathOperator{\GL}{GL}
\DeclareMathOperator{\SL}{SL}
\renewcommand{\subset}{\subseteq}
\newcommand{\mymat}[4]{\ensuremath{\left( \begin{smallmatrix} #1 & #2 \\ #3 & #4 \end{smallmatrix} \right)}}
\newcommand{\marksing}{\mathcal{U}}
\newcommand{\I}{I}
\newcommand{\J}{J}
\newcommand{\K}{K}
\renewcommand{\L}{L}
\title[Tschirnhausen Bundles of Quintic Covers of $\PP^1$]{Tschirnhausen Bundles of Quintic Covers of $\PP^1$}
\date{October 9, 2025}
\author{Sam Frengley} 
\email{samuel.frengley@inria.fr}
\urladdr{\url{https://samfrengley.github.io/}}
\author{Sameera Vemulapalli}
\address{Department of Mathematics,
  Harvard University}
\email{vemulapalli@math.harvard.edu}
\urladdr{\url{https://web.math.princeton.edu/~sameerav/}}
\begin{document}

\begin{abstract}
  A degree $d$ genus $g$ cover of the complex projective line by a smooth irreducible curve $C$ yields a vector bundle on the projective line by pushforward of the structure sheaf. We classify the bundles that arise this way when $d = 5$. Equivalently, we classify which $\PP^3$-bundles over $\PP^1$ contain smooth irreducible degree $5$ covers of $\PP^1$.

  Our main contribution is proving the existence of \emph{smooth} covers whose structure sheaf has the desired pushforward. We do this by showing that the substack of singular curves has positive codimension in the moduli stack of finite flat covers with desired pushforward. To compute the dimension of the space of singular curves, we prove a (relative) ``minimization theorem'', which is the geometric analogue of Bhargava's sieving argument when computing the densities of discriminants of quintic number fields.
\end{abstract}

\maketitle

\setcounter{tocdepth}{1}
\tableofcontents

\section{Introduction}
Let $\pi \colon C \rightarrow \PP^1$ be a degree $d$ cover of the complex projective line by a smooth irreducible curve of genus $g$. The restriction of functions gives a map $\cO_{\PP^1} \rightarrow \pi_* \cO_C$.  Define the \emph{Tschirnhausen bundle of $\pi$} to be $\mathcal{E}_{\pi} \coloneqq (\pi_* \cO_C /\cO_{\PP^1})^{\vee}$. Vector bundles on $\PP^1$ split completely, so we may write $\cE_\pi \cong \cO(e_1)\oplus \cdots \oplus \cO(e_{d-1})$ with $e_1 \leq e_2 \leq \dots \leq e_{d-1}$. Because $h^0(\PP^1, \pi_* \cO_C) = h^0(C,\cO_C) = 1$, we have $e_1 \geq 1$. Furthermore, from $h^1(C,\cO_C) = g$, we have $\sum_i e_i = g + d - 1$. The integer $e_i$ is often referred to as the \emph{$i^{\text{th}}$ scrollar invariant} of $\pi$. For the rest of this article, assume $1 \leq e_1 \leq \dots \leq e_{d-1}$.

\begin{question}[Tschirnhausen realization problem]
\label{q:main}
    Which $(d-1)$-tuples $(e_1,\dots,e_{d-1}) \in \ZZ^{d-1}$ arise as the scrollar invariants of a smooth irreducible degree $d$ cover $C \rightarrow \PP^1$?
\end{question}

Casnati and Ekedahl~\cite[Theorem 1.3]{CE} proved that $C$ admits a relative canonical embedding:
\[\begin{tikzcd}
	C && \PP(\cE) \\
	& \PP^1
	\arrow[hook, from=1-1, to=1-3]
	\arrow["\pi"', from=1-1, to=2-2]
	\arrow["\tau", from=1-3, to=2-2]
\end{tikzcd}
\]
where $\PP(\cE) = \Proj_{\PP^1} (\Sym^\bullet \cE)$.

Conversely, any such embedding of a curve $C$ into a $\PP^{d-2}$-bundle over $\PP^1$ must arise this way. Thus \Cref{q:main} is equivalent to the following.

\begin{question}
\label{q:main-alt}
Which $\PP^{d-2}$-bundles over $\PP^1$ contain smooth irreducible covers of genus $g$ and degree $d$?
\end{question}

For $d \in \{2,3,4\}$, the answer to the Tschirnhausen realization problem is known completely. The degree $2$ case is trivial and $e_1 = g+1$. In degree $3$, the pair $(e_1,e_2)$ arises as the scrollar invariants of a smooth irreducible curve if and only if $e_2 \leq 2e_1$, as shown by Maroni~\cite[Section~3]{maroni} (see also \cite[Section~2.2]{VV}). In degree $4$, it is shown in \cite[Theorem~3.1]{VV} that a triple $(e_1,e_2,e_3)$ arises as the scrollar invariants of a smooth irreducible curve if and only if $e_3 \leq e_1 + e_2$. 

For general $d$, the second named author formulated the following precise conjecture which classifies which Tschirnhausen bundles arise from \emph{primitive} covers, i.e., covers which do not factor through a nontrivial proper subcover. 

Define the polytope 
\begin{equation}\label{eq:V}
\polytope_d := \Big\{ (\overline{e}_1,\dots,\overline{e}_{d-1}) \;  : \; \sum \overline{e}_k = 1,  \;  0 \leq \overline{e}_1 \leq \cdots \leq \overline{e}_{d-1},
\text{ and }\overline{e}_{k + \ell} \leq \overline{e}_k + \overline{e}_\ell \Big\} \subset \RR^{d-1}.
\end{equation}

\begin{conjecture}[{\cite[Conjecture~1.3]{VV}}]
\label{conj:tschirnhausen}
For each $d$ and $g$, the tuple $(e_1,\dots,e_{d-1})$ is achieved as the scrollar invariants of a smooth primitive cover $C \to \PP^1$ of degree $d$ if and only if
\[
    \frac{1}{g + d-1}(e_1,\dots,e_{d-1}) \in \polytope_d.
\]
\end{conjecture}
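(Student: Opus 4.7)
My plan is to prove both directions of the conjecture. The necessary direction comes from algebraic constraints on the multiplicative structure of $\pi_*\cO_C$, while the sufficient direction requires a moduli-theoretic argument generalizing the approach of this paper from $d = 5$ to arbitrary $d$.

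\emph{Necessity.} The equality $\sum_i e_i = g + d - 1$ and positivity $e_1 \geq 1$ follow from Riemann--Roch and $h^0(C, \cO_C) = 1$. The subadditivity $e_{k+\ell} \leq e_k + e_\ell$ is the key constraint, and I would derive it from the algebra structure on $\pi_*\cO_C$. Dualizing, the multiplication map produces morphisms among the summands of $\cE^\vee \otimes \cE^\vee$ and $\cE^\vee$; for a primitive cover, certain of these components must be nonzero, and comparing the twists $\cO(-e_i)$ across these maps forces the desired inequality. This strategy is implicit in the known cases $d \leq 4$; a uniform version should be provable via a Hilbert--Burch or Koszul-type analysis of the resolution of $\cO_C$ on $\PP(\cE)$.

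\emph{Sufficiency.} Fix $\cE = \bigoplus_i \cO(e_i)$ whose normalized tuple lies in $\polytope_d$. I would define the moduli stack $\mathfrak{M}_\cE$ of finite flat degree $d$ covers of $\PP^1$ with Tschirnhausen bundle isomorphic to $\cE$, and then follow this paper's overall strategy. First, one needs a Casnati--Ekedahl-style structure theorem: the minimal free resolution of $\cO_C$ on $\PP(\cE)$ should be encoded by a finite collection of auxiliary bundles and maps between them, realizing $\mathfrak{M}_\cE$ as an open substack of a parameter space of resolution data. Second, this parameter space must be shown to be nonempty of the expected dimension by explicit construction, for instance by deforming from a cover with nearly-balanced scrollar invariants. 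Third, one proves a \emph{relative minimization theorem} showing that the substacks of singular covers $\mathfrak{M}_\cE^{\mathrm{sing}}$ and non-primitive covers $\mathfrak{M}_\cE^{\mathrm{np}}$ each have positive codimension in $\mathfrak{M}_\cE$. This is the geometric analogue of Bhargava's sieving argument: for any singular or reducible family, one constructs a modified family whose numerical invariants strictly decrease, contradicting minimality. Combining the three steps, a smooth irreducible primitive cover exists with Tschirnhausen bundle $\cE$.

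\emph{Main obstacles.} The principal difficulty is the structure theorem. Casnati--Ekedahl give explicit descriptions only through $d = 5$, where the resolution on $\PP(\cE)$ has length two. For $d = 6$ the resolution has length three, with the middle maps parametrized by auxiliary bundles whose structure is only partially understood; for larger $d$ the combinatorics rapidly becomes intractable, and it is not even clear what the right parametrizing data should be. The minimization theorem in the third step depends on this structure theorem, and its proof requires a uniform description of the possible singularities of degree-$d$ covers, which is currently available only in low degree. A secondary obstacle is that for composite $d$, the non-primitivity locus must be separated carefully from the smooth primitive locus, since factorizations through intermediate covers can dominate certain boundary strata of $\polytope_d$ and must be shown not to fill out the entirety of $\mathfrak{M}_\cE$.
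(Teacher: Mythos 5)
The statement you are addressing is \Cref{conj:tschirnhausen}, which is an open \emph{conjecture}: the paper does not prove it, and only establishes the case $d=5$ (\Cref{thm:main-thm}). Your proposal is likewise not a proof but a research program, and you candidly concede its central gap yourself: the Casnati--Ekedahl-style structure theorem needed to parametrize degree $d$ covers by resolution data is only available through $d=5$, and for larger $d$ ``it is not even clear what the right parametrizing data should be.'' Since both the minimization step and the dimension count depend entirely on having such an explicit parametrization, the sufficiency direction of your argument has no foundation beyond $d=5$. For the necessity direction, the subadditivity $e_{k+\ell}\leq e_k+e_\ell$ for primitive covers is in fact already a theorem (\cite[Theorem~1.4]{VV}), so that half is known, though your one-sentence derivation from the multiplication map on $\cE^\vee\otimes\cE^\vee$ would need the full argument of \cite{VV} to be made precise.

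Two further points where your outline diverges from what actually works in the $d=5$ case. First, your second step (``nonemptiness of the expected dimension by deforming from a nearly-balanced cover'') is not how the paper proceeds: for $d=5$ the parameter space is the affine space $H^0(\bigwedge^2\cF\otimes\cE\otimes\det\cE^\vee)$, which is trivially nonempty; the entire difficulty is showing a \emph{general} section is smooth, which is done by bounding the dimension of the singular locus, not by deformation. Second, your description of minimization as ``numerical invariants strictly decrease, contradicting minimality'' is not the mechanism used: \Cref{prop:move-stuff} shows a singular section has a $G$-translate satisfying explicit congruence conditions (a normal form), and these conditions are then fed into a codimension count (\Cref{lem:main-counting-lemma}). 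Note also that the paper's own \Cref{thm:deg-5} reveals a genuinely new phenomenon at $d=5$ --- the ``fin'' condition \ref{cond2}, a non-open condition on the syzygy bundle $\cF$ --- which suggests that for general $d$ the singular locus can fill out entire components of the parameter space for some $(\cE,\cF,\dots)$, so any general-$d$ minimization argument must contend with obstructions that are invisible in your outline. In short: the statement remains a conjecture, and your proposal does not close it.
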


\Cref{conj:tschirnhausen} is true when $d = 3$ by work of Maroni~\cite[Section~3]{maroni} and when $d = 4$ by \cite[Theorem~3.1]{VV}. Additionally, \cite[Theorem~1.4]{VV} shows that the scrollar invariants of any smooth primitive cover lie in $\polytope_d$. It remains to show the converse: that every tuple of integers which lies in $\polytope_d$ (when appropriately scaled) is achieved as the scrollar invariants of a smooth primitive curve. 

In this article, we prove \Cref{conj:tschirnhausen} when $d = 5$. Because all degree $5$ covers are primitive, we also answer the Tschirnhausen realization problem in this case. 

\begin{theorem}
\label{thm:main-thm}
Fix an integer $g \geq 0$. The tuple $(e_1,\dots,e_{4})$ is achieved as the scrollar invariants of a smooth irreducible genus $g$ cover $C \to \PP^1$ of degree $5$ if and only if
\[
    \frac{1}{g + 4}(e_1,\dots,e_{4}) \in \polytope_5.
\]
\end{theorem}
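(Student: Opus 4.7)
The \emph{only if} direction of \Cref{thm:main-thm} is \cite[Theorem~1.4]{VV}. For the converse, fix a tuple $e_1\leq\cdots\leq e_4$ with $(g+4)^{-1}(e_1,\dots,e_4)\in\polytope_5$, set $\cE=\bigoplus_i\cO(e_i)$, and consider the moduli stack $\cM_\cE$ of degree $5$ finite flat Gorenstein covers $\pi\colon C\to\PP^1$ whose Tschirnhausen bundle is $\cE$. By the Casnati--Ekedahl structure theorem for $d=5$ (a geometric incarnation of Buchsbaum--Eisenbud for codimension $3$ Gorenstein ideals), such a $C$ is cut out in $\PP(\cE)$ by the $4\times 4$ Pfaffians of a skew-symmetric map of vector bundles involving $\cE$ and an auxiliary rank-$5$ bundle $\cN$ on $\PP^1$. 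This presents $\cM_\cE$ explicitly as an open substack of a quotient of an affine scheme of admissible Pfaffian data, in which the ``smooth irreducible'' locus is open. The plan is to show this open locus is nonempty.

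I would argue in two steps: (i) exhibit at least one point of $\cM_\cE$, and (ii) show that the substack $\cM_\cE^{\mathrm{sing}}\subset\cM_\cE$ parametrizing singular covers has positive codimension. Step~(i) ought to follow by writing down an explicit (possibly very singular) Pfaffian model whose numerics work out exactly when the polytope inequalities $\overline{e}_{k+\ell}\leq\overline{e}_k+\overline{e}_\ell$ hold, since these are precisely what guarantee that the relevant bundles of skew-symmetric maps have enough global sections; alternatively, one can build a reducible test cover from degenerations of lower-degree covers. Once (i) and (ii) are in place, a generic point of $\cM_\cE$ yields a smooth cover $C$, and connectedness (hence irreducibility) is automatic because $h^0(C,\cO_C)=1+h^0(\PP^1,\cE^\vee)=1$, as $\cE^\vee$ has only strictly negative summands.

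Step~(ii) is the main obstacle and the point at which the promised \emph{relative minimization theorem} enters, mimicking Bhargava's sieve over non-maximal quintic orders. I would call a Pfaffian datum \emph{non-minimal at $p\in\PP^1$} if, after a local birational modification of the pair $(\cE,\cN)$ at $p$, one obtains a new Pfaffian datum with strictly smaller discriminant at $p$; the key claim is that every singular cover must be non-minimal at some point. The minimization theorem would then bound $\cM_\cE^{\mathrm{sing}}$ by the union, over finitely many ``smaller'' pairs $(\cE',\cN')$ and a finite amount of local gluing data, of images of the corresponding moduli stacks $\cM_{\cE'}$. A dimension count, using that $\cM_\cE$ has the expected dimension from the space of global Pfaffian sections and that each minimization move strictly drops total degree, should yield $\codim\cM_\cE^{\mathrm{sing}}\geq 1$ via a combinatorial verification from the polytope inequalities. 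The hard part of all this is that, unlike in degrees $d\leq 4$, the Casnati--Ekedahl data for $d=5$ is Pfaffian rather than determinantal, and the auxiliary bundle $\cN$ is not determined by $\cE$, so enumerating and bounding all possible local minimization moves requires substantially more care than in the previously known cases.
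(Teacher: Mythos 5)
Your high-level strategy --- parametrize quintic covers by Pfaffian data, then prove the singular locus has positive codimension by a geometric minimization/sieve --- is indeed the paper's strategy, but you miss the crucial structural decision that makes the argument tractable. The paper does \emph{not} work directly with the stack $\cM_\cE$ of covers with fixed Tschirnhausen bundle $\cE$ and varying first syzygy bundle $\cF$; instead it first proves the sharper \Cref{thm:deg-5}, which classifies exactly which \emph{pairs} $(\cE,\cF)$ are realizable, working inside the fixed affine space $H^0(\bigwedge^2\cF\otimes\cE\otimes\det\cE^\vee)$ and showing its singular locus has positive codimension there. Only afterwards is \Cref{thm:main-thm} deduced by an integer-programming projection: one shows (via Gordan's lemma and checking finitely many irreducible generators, \Cref{lem:projection}) that every integer point of $\polytope_5$ lifts to an integer point of the $(\cE,\cF)$-polytope $Q$. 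Your version, sieving directly in $\cM_\cE=\bigcup_\cF\cH_{\cE,\cF}$, would require controlling which component of this union has maximal dimension and bounding the singular locus relative to that component, which is substantially more awkward; you yourself flag that $\cN$ (the paper's $\cF$) is not determined by $\cE$ as the hard part, but you do not resolve it.

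There is also a genuine gap you cannot see from the $\cE$-only perspective: condition~\ref{cond1} of \Cref{thm:deg-5} (the linear inequalities) is \emph{not} sufficient at the $(\cE,\cF)$ level. There is an extra ``fin'' constraint~\ref{cond2} (if $d^{(1)}_{15},d^{(1)}_{24},d^{(4)}_{12}<0$ then $d^{(1)}_{25}=0$), established in \Cref{prop:cond2}, and the dimension count in \Cref{sec:dim-counting} (see \Cref{lem:satisfying-numerical-criterion,lem:extras}) must treat the boundary triples $(1,2,1)$, $(4,5,1)$, $(4,5,4)$ separately precisely because of this. Any honest sieve over pairs $(\cE,\cF)$ has to discover and accommodate this phenomenon; your proposal, which hopes for a uniform codimension bound from the polytope inequalities alone, would hit a wall here. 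Finally, your step~(i) (exhibiting one point of $\cM_\cE$) is both vague and unnecessary in the paper's framing: the ambient affine space $H^0(\bigwedge^2\cF\otimes\cE\otimes\det\cE^\vee)$ is automatically nonempty, and once the singular locus is shown to be a proper closed subset, a general section is smooth; irreducibility of the cover then follows from $e_1\geq 1$ exactly as you say. Your idea of minimizing by passing to ``smaller'' pairs $(\cE',\cN')$ corresponds to the paper's partial normalization (\Cref{lemma:normalize-quintic}), but the paper's dimension count actually uses the local congruence conditions (normal forms of type $(\K)$ and $(\I,\J,\K)$, \Cref{def:normal-form}, \Cref{prop:move-stuff}) directly rather than an induction on degree.
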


In fact, we deduce \Cref{thm:main-thm} from the following more precise result. Recall there is an embedding $C \xhookrightarrow{} \PP(\cE)$. In the case $d = 5$, the minimal resolution of $C$ inside $\PP(\cE)$ has the form
\[
    0 \rightarrow \tau^*\det(\cE)(-5)  \rightarrow \tau^*(\cF^{\vee}\otimes \det(\cE))(-3) \rightarrow \tau^*\cF(-2) \rightarrow \mathcal{I}_C \rightarrow 0
\]
where $\tau \colon \PP(\cE) \to \PP^1$ is the structure morphism, $\cF$ is a vector bundle on $\PP^1$ known as the \emph{first syzygy bundle of $\pi$}, and $\mathcal{I}_C$ is the ideal sheaf of $C$ in $\PP(\cE)$. The vector bundle $\cF$ has degree $2(g+4)$ and rank $5$. As with $\cE$, we write $\cF \cong \cO(f_1) \oplus \dots \cO(f_5)$ where $f_1 \leq f_2 \leq \dots \leq f_5$. For simplicity, set $d^{(k)}_{ij} \coloneqq f_i + f_j + e_k - (g+4)$ for all $1 \leq i < j \leq 5$ and $1 \leq k \leq 4$.

\begin{theorem}
\label{thm:deg-5} 
Fix an integer $g \geq 0$ and let $\cE$ and $\cF$ be rank $4$ and $5$ vector bundles on $\PP^1$ with $\deg(\cE) = g+4$ and $\deg(\cF) = 2(g+4)$. There exists a smooth irreducible quintic cover $\pi \colon C \rightarrow \PP^1$ with Tschirnhausen bundle $\cE$ and first syzygy bundle $\cF$ if and only if:
\begin{enumerate}[label=(\roman*)]
    \item \label{cond1} for all $1 \leq i < j \leq 5$ and $1 \leq k \leq 4$ with $i + j + k = 8$, we have $d^{(k)}_{ij} \geq 0$;
    \item \label{cond2} and if $d^{(1)}_{15} < 0$ and $d^{(1)}_{24} < 0$ and $d^{(4)}_{12} < 0$, then $d^{(1)}_{25} = 0$.
\end{enumerate} 
\end{theorem}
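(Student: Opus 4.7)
The plan is to apply the Casnati--Ekedahl structure theorem for degree $5$ covers. By the displayed resolution, $C \subset \PP(\cE)$ is cut out by the $4 \times 4$ sub-Pfaffians of a $5 \times 5$ skew-symmetric matrix $M$ whose entries are global sections on $\PP^1$ of line bundles of degree $d^{(k)}_{ij}$ for the appropriate indices $i,j,k$. Writing $\cA$ for the affine space of all such matrices and $G$ for the group of equivalences (simultaneous row/column operations and rescalings compatible with the splittings of $\cE$ and $\cF$), the moduli stack of finite flat $(\cE,\cF)$-covers is an open substack $\cM_{\cE,\cF} \subset [\cA/G]$.

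\emph{Necessity.} If $d^{(k)}_{ij} < 0$ for some triple with $i+j+k = 8$, then the corresponding entry of $M$ vanishes identically, and one verifies that the Pfaffian scheme is then forced to be either reducible or non-reduced, so no smooth irreducible cover exists. Condition~(ii) follows from a more delicate analysis of the joint vanishing pattern when $d^{(1)}_{15}$, $d^{(1)}_{24}$, and $d^{(4)}_{12}$ are all negative, which forces the additional equality $d^{(1)}_{25} = 0$.

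\emph{Sufficiency.} Assume (i) and (ii). We need to show (a) that $\cM_{\cE,\cF}$ is nonempty, and (b) that the substack $\cM^{\mathrm{sing}}_{\cE,\cF}$ of singular or reducible covers has strictly positive codimension in $\cM_{\cE,\cF}$; combining these with the openness of smoothness and irreducibility then yields a smooth irreducible $(\cE,\cF)$-cover. Non-emptiness is handled by an explicit construction, or by a degeneration from a simpler splitting type. The codimension bound in (b) is the crux, and rests on a \emph{relative minimization theorem}, the geometric analogue of Bhargava's sieving argument for discriminants of quintic number fields. The idea is to stratify $\cM^{\mathrm{sing}}_{\cE,\cF}$ by the locus of points $p \in \PP^1$ at which $M$ is not ``minimal'', and then show that each stratum fibers over a moduli stack $\cM_{\cE',\cF'}$ of covers with strictly smaller bundle invariants, via a reduction step consisting of row/column operations scaled by a uniformizer at $p$. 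The main obstacle is carrying out this minimization in families, enumerating the possible degeneracy patterns, and controlling the fiber dimensions combinatorially in terms of the $d^{(k)}_{ij}$ so that the resulting codimension estimate is strictly positive precisely under (i) and (ii); in particular, the equality $d^{(1)}_{25} = 0$ in condition~(ii) should arise to kill the one boundary stratum where the count would otherwise be tight.
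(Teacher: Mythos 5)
Your proposal captures the paper's strategy exactly: Casnati's sub-Pfaffian parametrization, necessity of the inequalities via vanishing patterns of the matrix entries (condition~\ref{cond1} from Canning--Larson, condition~\ref{cond2} from a finer local analysis), and sufficiency by bounding the codimension of the singular locus via a relative minimization theorem in the spirit of Bhargava's sieve, organized by stratifying the singular locus according to the congruence conditions achieved after a $G$-translation scaled by a uniformizer. The only small redundancy is your step (a): since the codimension bound is carried out inside the irreducible affine space $H^0\big(\bigwedge^2 \cF \otimes \cE \otimes \det \cE^{\vee}\big)$, a strictly positive codimension for the singular locus already yields non-emptiness of the smooth locus, so no separate existence argument is needed.
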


If $f_1 + f_2 + e_1 \geq g+4$, then the three conditions above are satisfied. In this case, Casnati proved a Bertini-like theorem which shows the existence of a smooth curve with prescribed $\cE$ and $\cF$ bundle \cite[Theorem~1.2]{casnati} (cf. \cite{P}). Conversely, Canning and Larson \cite[Section~5]{CL} show that every smooth irreducible quintic cover satisfies the linear inequalities stated in \ref{cond1}. Our contribution is twofold: first, we show the necessity of \ref{cond2}, and second, when all \emph{ three} conditions are satisfied, we show the existence of a smooth irreducible cover with prescribed $\cE$ and $\cF$ bundle. To deduce \Cref{thm:main-thm} from \Cref{thm:deg-5}, we simply project the linear inequalities in \Cref{thm:deg-5} onto the $e_i$. 

We conclude with one final corollary, which follows immediately from \Cref{thm:deg-5} and Casnati's parametrization of quintic covers with prescribed $\cE$ and $\cF$ bundle. Let $\cH_{\cE,\cF}$ be the moduli stack of smooth irreducible quintic covers with prescribed $\cE$ and $\cF$ bundle, and let $\cH_{5,g}$ be the moduli stack of smooth irreducible quintic covers of genus $g$. Our result shows that $\cH_{\cE,\cF}$ is not always of ``expected codimension'' $h^1(\End(\cE)) + h^1(\End(\cF))$.

\begin{coro}
\label{coro:dim}
Suppose $\cE$ and $\cF$ satisfy the conditions of \Cref{thm:deg-5}. Then $\cH_{\cE,\cF}$ is irreducible and the codimension of $\cH_{\cE,\cF}$ in $\cH_{5,g}$ is $h^1(\End(\cE)) + h^1(\End(\cF)) - h^1(\bigwedge^2 \cF \otimes \cE \otimes \det \cE^{\vee})$.
\end{coro}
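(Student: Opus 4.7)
The plan is to combine Casnati's explicit parametrization of quintic covers with prescribed Tschirnhausen and syzygy bundles with \Cref{thm:deg-5} and a Riemann--Roch calculation on $\PP^1$. Recall that Casnati's structure theorem presents a quintic cover $\pi \colon C \to \PP^1$ with bundles $\cE$ and $\cF$ as the vanishing locus inside $\PP(\cE)$ of the $4 \times 4$ Pfaffians of a skew-symmetric map $\tau^*(\cF^\vee \otimes \det \cE)(-3) \to \tau^* \cF(-2)$, and this skew map is unique up to the action of $\Aut(\cE) \times \Aut(\cF)$. Via the projection formula the space of such skew maps is identified with
\[
    V \coloneqq H^0\bigl(\PP^1,\; \textstyle\bigwedge^2 \cF \otimes \cE \otimes \det \cE^{\vee}\bigr),
\]
and the quotient stack $[U / (\Aut(\cE) \times \Aut(\cF))]$ --- where $U \subset V$ is the open locus of sections whose Pfaffians define a smooth irreducible curve --- is a $B\GG_m$-gerbe over $\cH_{\cE,\cF}$, the $\GG_m$ being the scalar subgroup of $\Aut(\cE) \times \Aut(\cF)$ acting trivially on $V$.

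For irreducibility, \Cref{thm:deg-5} ensures that under hypotheses \ref{cond1}--\ref{cond2} the open locus $U$ is non-empty. Since $V$ is an affine space, every non-empty Zariski open subset of $V$ is irreducible, hence so is $\cH_{\cE,\cF}$.

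For the codimension, I will apply Riemann--Roch on $\PP^1$: for any vector bundle $\cG$ on $\PP^1$, $h^0(\cG) - h^1(\cG) = \chi(\cG) = \deg \cG + \rank \cG$. Direct rank and degree computations give
\[
    \chi\bigl(\textstyle\bigwedge^2 \cF \otimes \cE \otimes \det \cE^{\vee}\bigr) = 2(g+4) + 40 = 2g+48, \quad \chi(\End \cE) = 16, \quad \chi(\End \cF) = 25.
\]
The stack-theoretic dimension of the quotient is $\dim U - \dim \Aut(\cE) - \dim \Aut(\cF)$, and accounting for the trivially-acting $B\GG_m$-gerbe yields
\[
    \dim \cH_{\cE,\cF} = h^0(V) - h^0(\End \cE) - h^0(\End \cF) + 1,
\]
which simplifies via Riemann--Roch to $2g+8 + h^1(V) - h^1(\End \cE) - h^1(\End \cF)$. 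Combined with the Hurwitz space dimension $\dim \cH_{5,g} = 2g+8$, subtraction yields the claimed codimension.

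The only real subtlety is bookkeeping the $+1$ coming from the scalar $\GG_m \subset \Aut(\cE) \times \Aut(\cF)$ acting trivially on $V$ --- concretely, pairs $(\lambda I_{\cE}, \mu I_{\cF})$ with $\mu^2 = \lambda^3$ act trivially on $\bigwedge^2 \cF \otimes \cE \otimes \det \cE^{\vee}$. Everything else is mechanical from Casnati's parametrization and Riemann--Roch on $\PP^1$.
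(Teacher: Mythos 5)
Your proposal is correct and carries out exactly the computation the paper has in mind (the paper dismisses the proof as ``follows immediately from \Cref{thm:deg-5} and Casnati's parametrization''). The only cosmetic difference is that the paper's \Cref{prop:moduli-qunitic} gives $\cH_{\cE,\cF} \cong U_{\cE,\cF}/G$ for the codimension-one subgroup $G \subset \GL(\cE)\times\GL(\cF)$ defined by $\det(g_4)^2 = \det(g_5)$, whereas you quotient by the full $\GL(\cE)\times\GL(\cF)$ and correct by a $\GG_m$-gerbe coming from the scalar pairs $(t^2 I,\, t^3 I)$ acting trivially on $\bigwedge^2\cF\otimes\cE\otimes\det\cE^\vee$; since $\GL(\cE)\times\GL(\cF)\cong G\times\GG_m$ (they commute, intersect trivially, and together have full dimension) the two bookkeepings are equivalent, each supplying the $+1$. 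Your Euler characteristic calculations ($2g+48$, $16$, $25$) and the Hurwitz stack dimension $\dim\cH_{5,g}=2g+2d-2=2g+8$ are all correct, and the subtraction delivers the stated formula.
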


\subsection{Proof overview}
We make use of the parametrization of quintic covers of $\PP^1$ given by Casnati \cite[Theorem~1.1]{casnati}. In particular, the moduli space of quintic covers with fixed $\cE$ and $\cF$ bundle can be presented as a quotient of a (possibly empty) Zariski open subset of the affine space $H^0( \bigwedge^2 \cF \otimes \cE \otimes \det \cE^{\vee})$. Our contribution is to use sieve techniques pioneered by Bhargava in the arithmetic setting (see e.g., \cite{B-quinticdiscs}), which translate to partial normalization in the geometric setting, to prove that a general member of $H^0(\bigwedge^2 \cF \otimes \cE \otimes \det \cE^{\vee})$ corresponds to a smooth quintic cover in the required cases.

In \Cref{sec:necessity}, we use Casnati's parametrization to show the necessity of condition \ref{cond2} in \Cref{thm:deg-5} (condition \ref{cond1} is known to be necessary by the computations in \cite[Section~5]{CL}). To finish the proof of \Cref{thm:deg-5}, it then remains to show the \emph{existence} of smooth covers with the required $\cE$ and $\cF$ bundle.

In \Cref{sec:minimization}, we prove \Cref{prop:move-stuff}, which is the key technical ingredient in the proof of \Cref{thm:deg-5}. Given a section $\cA \in H^0( \bigwedge^2 \cF \otimes \cE \otimes \det \cE^{\vee})$ which does \emph{not} correspond to a smooth quintic cover, we prove a (relative) ``minimization theorem'' which shows that there exists a $\GL(\cE) \times \GL(\cF)$-translate of $\cA$ which satisfies certain congruence conditions at the singular point. We further remark that by work of Bhargava~\cite{B-hcl4}, each section $\cA$ actually gives rise to a finite flat degree $5$ morphism $\pi \colon X \rightarrow \PP^1$, where $X$ need not be smooth, irreducible, or even Gorenstein. Our (relative) minimization theorem gives rise to a partial normalization $\widetilde{\pi} \colon \widetilde{X} \to \PP^1$ for which the corresponding Tschirnhausen and first syzygy bundles are given in \Cref{lemma:normalize-quintic}.

In \Cref{sec:dim-counting}, we use the congruence conditions provided by \Cref{prop:move-stuff} to compute the codimension in $H^0( \bigwedge^2 \cF \otimes \cE \otimes \det \cE^{\vee})$ of the space of sections which do not correspond to a smooth quintic cover. We show that this codimension is positive in the cases allowed by \Cref{thm:deg-5}, thus concluding the proof of \Cref{thm:deg-5}. We deduce \Cref{thm:main-thm} from \Cref{thm:deg-5} in \Cref{sec:finishing-proof}.

Finally, in \Cref{sec:discussion}, we address several conjectures posed in \cite[Section~8]{VV} in the case $d = 5$, using our main result. We also discuss the mysterious condition~\ref{cond2} in \Cref{thm:deg-5}.

\subsection{Conventions}
We work over $\CC$ for simplicity, but we note here that the proofs of \Cref{thm:main-thm} and \Cref{thm:deg-5} generalize to any field of sufficiently large characteristic (where sufficiently large depends only on the genus $g$). Henceforth, the phrase \emph{cover of $\PP^1$} means a finite flat morphism $C \rightarrow \PP^1$. The phrase \emph{smooth cover of $\PP^1$} means further that $C$ is smooth. Likewise, the phrase \emph{irreducible cover of $\PP^1$} means that $C$ is irreducible. 

Throughout $d$ will refer to the degree of $\pi$ and $g$ will refer to the arithmetic genus of $C$.  Throughout, $\cE$ will be a vector bundle on $\PP^1$ of rank $4$ and degree $g+4$ and we write $\cE \simeq \cO(e_1) \oplus \dots \oplus \cO(e_4)$ with $0 < e_1 \leq \dots \leq e_4$. Similarly, $\cF$ will be a vector bundle of rank $5$ and degree $2(g+4)$ and we write $\cF \simeq \cO(f_1) \oplus \dots \oplus \cO(f_5)$ with $f_1 \leq \dots \leq f_5$.

\subsection{Acknowledgements}
The authors would like to thank Ravi Vakil for useful discussions and Uthsav Chitra for suggesting the proof of \Cref{lem:projection}. SF was funded by the Royal Society through C{\'e}line Maistret's Dorothy Hodgkin Fellowship. SV was supported by the National Science Foundation under grant number DMS2303211. 

\section{The parametrization of quintic covers of \texorpdfstring{$\PP^1$}{P{\textasciicircum}1}}
\label{sec:param-algebras}
Let $\cH_{\cE, \cF}$ denote the stack parametrizing smooth quintic covers $\pi \colon C \to \PP^1$ with Tschirnhausen bundle $\cE$ and first syzygy bundle $\cF$. Here $\deg(\cF) = 2(g+4)$, $\rank(\cF) = 5$, $\deg(\cE) = g+4$, and $\rank(\cE) = 4$. It turns out that $\cH_{\cE, \cF}$ can naturally be described as the quotient of (an open subscheme of) an affine space by a group, as we now show. 

Upon choosing splittings $\cE \cong \cO(e_1) \oplus \cdots \oplus \cO(e_4)$ and $\cF \cong \cO(f_1) \oplus \cdots \oplus \cO(f_5)$, a section $\cA \in H^0( \bigwedge\nolimits^2 \cF \otimes \cE \otimes \det \cE^{\vee} )$ can be considered as a quadruple $\cA = (A_1, A_2, A_3, A_4)$ of $5 \times 5$ alternating matrices such that the $(i,j)$-th entry of $A_k$ is a homogeneous form $a_{ij}^{(k)} \in \CC[s,t]$ whose degree is given by $d_{ij}^{(k)} = \deg(a_{ij}^{(k)}) = f_i + f_j + e_k - (g+4)$. Since the matrices $A_k$ are alternating, $a_{ij}^{(k)} = -a_{ji}^{(k)}$.

Given a section $\cA \in H^0( \bigwedge\nolimits^2 \cF \otimes \cE \otimes \det \cE^{\vee} )$, consider the $5\times 5$ alternating matrix of linear forms $\cA(\mathbf{x}) = \sum_{k=1}^4 A_k x_k$ in the variables $x_1,\dots,x_4$. Consider the scheme $C \subset \PP(\cE)$ cut out by the five $4 \times 4$ principal sub-Pfaffians of the matrix $\cA(\mathbf{x})$ (which are quadratic forms in the variables $x_k$). Let $U_{\cE,\cF}$ be the (open) locus of sections cutting out a smooth curve. 

Let $G \subset \GL(\cE) \times \GL(\cF)$ be the subgroup consisting of pairs of matrices $(g_4, g_5)$ for which $\det(g_4)^2 = \det(g_5)$. The space $H^0( \bigwedge\nolimits^2 \cF \otimes \cE \otimes \det \cE^{\vee} )$ has a natural action of $G$; given a section $\cA \in H^0( \bigwedge\nolimits^2 \cF \otimes \cE \otimes \det \cE^{\vee} )$ and $g = (g_4, g_5) \in G \subset \GL(\cE) \times \GL(\cF)$ the matrix $g_4 \in \GL(\cE)$ acts linearly on the span of the matrices $A_1, A_2, A_3, A_4$ as 
\[
\begin{bmatrix}
    A_1 & A_2 & A_3 & A_4  
\end{bmatrix}^t
\mapsto 
\frac{1}{\det(g_4)} g_4 
\begin{bmatrix}
  A_1 & A_2 & A_3 & A_4
\end{bmatrix}^t
\]
and the element $g_5 \in \GL(\cF)$ acts via 
\[
A_k \mapsto g_5 \, A_k \, g_5^t. 
\]

The utility of this $G$-action is captured in the following theorem.

\begin{theorem}[{Casnati~\cite[Theorem~1.1]{casnati}}]
    \label{prop:moduli-qunitic}
    There exists an isomorphism of moduli stacks
    \[
    \cH_{\cE, \cF} \cong U_{\cE,\cF}/ G.
    \]
\end{theorem}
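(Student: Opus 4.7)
The plan is to combine the Casnati--Ekedahl relative canonical embedding with the Buchsbaum--Eisenbud structure theorem for codimension-three Gorenstein ideals, and then to identify the remaining ambiguity in the resulting data as the action of $G$.

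For the forward map $\cH_{\cE,\cF} \to [U_{\cE,\cF}/G]$, given a smooth quintic cover $\pi \colon C \to \PP^1$ I would invoke Casnati--Ekedahl to embed $C \hookrightarrow \PP(\cE)$ as a codimension-three subscheme. Smoothness makes $C$ Gorenstein, so Buchsbaum--Eisenbud guarantees that the minimal resolution of $\mathcal{I}_C$ takes the displayed form with an alternating middle map $\tau^*(\cF^\vee \otimes \det\cE)(-3) \to \tau^*\cF(-2)$. Twisting by $\cO_{\PP(\cE)}(1)$ and applying $\tau_*$ converts this middle map into a section on $\PP^1$ of $\cF \otimes \cF \otimes \cE \otimes \det\cE^\vee$; the alternating condition identifies it with a section of $\bigwedge^2 \cF \otimes \cE \otimes \det\cE^\vee$. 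Upon fixing splittings of $\cE$ and $\cF$ this is precisely the quadruple $(A_1,\dots,A_4)$ of alternating matrices, and smoothness of $C$ places the section in $U_{\cE,\cF}$.

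For the reverse direction, Buchsbaum--Eisenbud applied in the opposite direction shows that for any $\cA \in U_{\cE,\cF}$ the subscheme $C \subset \PP(\cE)$ cut out by the five $4 \times 4$ principal sub-Pfaffians of $\cA(\mathbf{x})$ is Gorenstein of codimension three with a resolution of the prescribed shape, so pushing forward reproduces $\cE$ as the Tschirnhausen bundle and $\cF$ as the first syzygy bundle. The only ambiguity in both directions is the choice of splittings of $\cE$ and $\cF$, so $\GL(\cE) \times \GL(\cF)$ acts on sections by the formulas in the excerpt. I would extract the determinant constraint $\det(g_4)^2 = \det(g_5)$ by comparing the induced automorphisms on the two ends $\tau^*\det(\cE)(-5)$ and $\tau^*\cF(-2)$ of the resolution via the self-duality of the alternating middle map, with the factor of two arising from contracting the alternating tensor against $\cF$ twice.

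The main obstacle will be upgrading this bijection on isomorphism classes to an isomorphism of stacks. For this I would carry out the entire construction relatively over an arbitrary base $B$: any $B$-family of covers produces a $G$-torsor on $B$ of compatible trivializations of $\cE$ and $\cF$ together with a section of $\bigwedge^2\cF\otimes\cE\otimes\det\cE^\vee$, and conversely any $B$-morphism to $[U_{\cE,\cF}/G]$ yields a family of covers via the Pfaffian construction. The technical heart is verifying that the Buchsbaum--Eisenbud correspondence respects base change and preserves the alternating structure in families; this is essentially the content of Casnati's original theorem and the step I would rely on most directly.
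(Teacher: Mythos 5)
The paper does not prove this statement: it is imported verbatim as Casnati's theorem, with the surrounding text only setting up the objects $U_{\cE,\cF}$, $G$, and the $G$-action, then citing \cite[Theorem~1.1]{casnati} for the isomorphism. So there is no internal proof for your proposal to be compared against; the relevant comparison is with Casnati's own argument.

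Your outline does reflect that argument's key ingredients: the Casnati--Ekedahl relative canonical embedding, the Buchsbaum--Eisenbud structure theorem for codimension-three Gorenstein ideals giving the alternating middle map, the identification of the residual ambiguity with the $\GL(\cE)\times\GL(\cF)$-action cut down by the determinant constraint, and the need to carry all of this out in families to obtain a stack isomorphism rather than a bijection on points. Two small corrections. First, the derivation of the section on $\PP^1$ is not ``twist by $\cO(1)$ and push forward''; rather, the middle map is an element of $H^0\big(\PP(\cE),\, \tau^*(\cF\otimes\cF\otimes\det\cE^\vee)(1)\big)$, and the projection formula together with $\tau_*\cO_{\PP(\cE)}(1)=\cE$ (in the $\Proj(\Sym^\bullet\cE)$ convention used here) gives $H^0(\PP^1, \cF\otimes\cF\otimes\cE\otimes\det\cE^\vee)$, with the alternating condition landing you in $\bigwedge^2\cF\otimes\cE\otimes\det\cE^\vee$. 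Second, the heuristic you give for $\det(g_4)^2=\det(g_5)$ (``contracting the alternating tensor against $\cF$ twice'') is vague; the constraint really comes from the Buchsbaum--Eisenbud self-duality of the length-three resolution, which forces $\bigwedge^5\cF \cong (\det\cE)^{\otimes 2}$ compatibly with the group action, and this is the relation you should be extracting. You are candid that the technical heart (the relative Buchsbaum--Eisenbud correspondence and its compatibility with base change) is being outsourced to Casnati; since the paper itself does exactly that, this is an acceptable stopping point, though it means your proposal is closer to an informed citation than a self-contained proof.
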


The primary contribution of this article is to prove that $\cE$ and $\cF$ satisfy the hypotheses of \Cref{thm:deg-5} precisely when the subvariety $U_{\cE,\cF}$ is nonempty.

\section{The necessity of condition \texorpdfstring{\textnormal{\ref{cond2}} in \Cref{thm:deg-5}}{(ii) in Theorem 1.5}}
\label{sec:necessity}
Given a section $\cA \in H^0( \bigwedge\nolimits^2 \cF \otimes \cE \otimes \det \cE^{\vee} )$, let $C$ denote the scheme $C \subset \PP(\cE)$ cut out by the five $4 \times 4$ sub-Pfaffians of the matrix $\cA(\mathbf{x})$. Note that $C$ may not be a curve: for example if $\cA$ is the zero section, then $C$ is equal to $\PP(\cE)$. 

If any of the linear inequalities in \Cref{thm:deg-5}\ref{cond1} are not met, then the computations in \cite[Section~5]{CL} imply that $C$ is reducible or $\dim C > 1$ (see also \cite[Lemma~10]{B-quinticdiscs} for analogous computations in the arithmetic case). In this section we show the necessity of \Cref{thm:deg-5}\ref{cond2}.

\begin{defn}
\label{smooth-defn}
Given a section $\cA$ and a point $P \in C$, we say $\cA$ is \emph{smooth at $P$} if the Zariski tangent space of $C$ at $P$ is $1$-dimensional. If $\cA$ is smooth at all points of $C$, we say $\cA$ is \emph{smooth}. Given a point $p \in \PP^1$, we say $\cA$ is \emph{smooth above $p$} if it is smooth at all points $P \in C$ above $p$. If $\cA$ is not smooth at $P$, we say $\cA$ is \emph{singular at $P$}. Similarly, if $\cA$ is not smooth, we say it is \emph{singular}. Finally, if $\cA$ is not smooth above $p$, we say $\cA$ is \emph{singular above $p$}.
\end{defn}

Observe that $\cA$ is smooth at a point $p$ if and only if $C$ is a smooth curve. 

\begin{lemma}
\label{lem:singular-shape}
Let $p \in \PP^1$ be a point with uniformizer $u$. Fix $1 \leq \I < \J \leq 5$ and $1 \leq \K \leq 4$. Suppose that:
\begin{enumerate}[label=(\roman*)]
    \item $a^{(\K)}_{\I\J} \equiv 0 \pmod{u^2}$;
    \item $a^{(k)}_{\I\J} \equiv 0 \pmod{u}$ for all $1 \leq k \leq 4$; 
    \item $a^{(\K)}_{\I j} \equiv 0 \pmod{u}$ for all $1 \leq j \leq 5$; and
    \item $a^{(\K)}_{i \J} \equiv 0 \pmod{u}$ for all $1 \leq i \leq 5$;
\end{enumerate}
Then $\cA$ is singular above $p$.
\end{lemma}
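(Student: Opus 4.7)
The plan is to exhibit a concrete point $P \in C$ above $p$ and prove that the Jacobian of the five $4\times 4$ sub-Pfaffians cutting out $C$ has rank at most $2$ at $P$, which forces $\dim T_P C \geq 2$. After trivializing $\cE$ and $\cF$ near $p$ using the chosen splittings, I work in the affine charts $s = 1$ on $\PP^1$ (with uniformizer $t = u$) and $x_\K = 1$ on $\PP(\cE)$, with local coordinates $(t, (y_k)_{k \neq \K})$ where $y_k = x_k$, and take $P$ to be the origin. The matrix $\cA(\mathbf{x}) = \sum_k A_k(t)\, x_k$ evaluates at $P$ to $A_\K(0)$. Hypotheses (iii) and (iv), combined with the alternating symmetry of $A_\K$, force the entirety of rows and columns $\I$ and $\J$ of $A_\K(0)$ to vanish; hence $A_\K(0)$ has rank at most $2$, all five $4\times 4$ sub-Pfaffians vanish at $P$, and $P \in C$.

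Let $Q_\ell$ denote the $\ell$-th sub-Pfaffian as a local function. The key claim is that for every $\ell \notin \{\I, \J\}$, both $\partial Q_\ell / \partial t$ and $\partial Q_\ell / \partial y_k$ (for all $k \neq \K$) vanish at $P$. Using the Pfaffian gradient identity
\[
\frac{\partial \mathrm{Pf}_\ell}{\partial m_{ij}}\bigg|_M = \pm\, m_{i'j'}, \qquad \{i', j'\} = \{1, \ldots, 5\} \setminus \{\ell, i, j\},
\]
the only summands that survive at $M = A_\K(0)$ require $\{i', j'\} \cap \{\I, \J\} = \emptyset$; since $\ell \notin \{\I, \J\}$, this pins $\{i, j\} = \{\I, \J\}$. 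For $\partial Q_\ell / \partial y_k$, the resulting direction entry $(A_k(0))_{\I\J} = a_{\I\J}^{(k)}(0)$ vanishes by hypothesis (ii). For $\partial Q_\ell / \partial t = \frac{d}{dt}\mathrm{Pf}_\ell(A_\K(t))|_{t=0}$, the analogous combinatorial restriction on pairings of $\{1, \ldots, 5\} \setminus \{\ell\}$ again leaves only the pairing containing $\{\I, \J\}$, whose $t$-derivative at $0$ is a linear combination of $a_{\I\J}^{(\K)}(0)$ and $(a_{\I\J}^{(\K)})'(0)$, both zero by hypothesis (i).

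Consequently only the two rows $\ell \in \{\I, \J\}$ of the Jacobian can be nonzero, its rank is at most $2$, and $\dim T_P C \geq 4 - 2 = 2$; so $P$ is a singular point of $C$ above $p$, which is exactly what it means for $\cA$ to be singular above $p$ in the sense of \Cref{smooth-defn}. The heart of the argument is the combinatorial observation that the rank-$\leq 2$ shape of $A_\K(0)$ concentrates every surviving gradient entry on the index pair $\{\I, \J\}$, at which point hypotheses (i) and (ii) are tailored precisely to kill the remaining terms. I do not expect any serious obstacle — sign bookkeeping in the Pfaffian expansion is the only mild nuisance, and it does not affect which terms vanish.
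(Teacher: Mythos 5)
Your proof is correct and takes essentially the same approach as the paper's: exhibit the point $P$ above $p$ with all $\PP^3$-coordinates zero except the $\K$-th, check it lies on $C$ via the rank-$\leq 2$ structure of $A_\K$ at $u=0$, and bound the rank of the Jacobian of the five sub-Pfaffians at $P$ by $2$. The paper leaves that last step as a ``direct calculation''; you have supplied it explicitly via the Pfaffian gradient identity, which is a clean way to organize the bookkeeping.
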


\begin{proof}
  Let $R = \CC[u]$. Since singularity may be checked locally we may pass to the affine open subset $V = \Spec R \subset \PP^1$ containing $p$. Consider the closed subvariety $D \subset \PP^3 \times V$ cut out by the vanishing of the $4 \times 4$ sub-Pfaffians of $\cA \in R^4 \otimes \bigwedge^2 R^5$, i.e. let $D$ be the restriction of $C$ to $\PP^3 \times V$. On applying an appropriate permutation matrix $g_4 \in \GL_4(\CC)$ we may assume that $\K = 4$. After applying an appropriate permutation matrix $g_5 \in \GL_5(\CC)$, we may assume that $\I = 4$ and $\J = 5$.
  
  Since $A_4$ has rank at most $2$ at $u = 0$, the point $P = ([0:0:0:1], 0)$ lies on $D$. A direct calculation shows that the Jacobian matrix of the $4 \times 4$ sub-Pfaffians of $\cA(\mathbf{x})$ evaluated at $P$ has rank at most $2$, so the dimension of the Zariski tangent space of $C$ at $P$ is at least $2$.
\end{proof}

\begin{proposition}
\label{prop:cond2}
Suppose $d^{(1)}_{15}, d^{(1)}_{24}, d^{(4)}_{12} < 0$. Then $\cA$ is singular above every point on $\PP^1$ at which $a^{(1)}_{25}$ vanishes.
\end{proposition}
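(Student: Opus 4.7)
The plan is to translate the three strict inequalities on bundle degrees into the vanishing of certain homogeneous forms, and then to invoke \Cref{lem:singular-shape} with a suitable choice of indices $(\I,\J,\K)$.

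First I would observe that because $e_1\leq\cdots\leq e_4$ and $f_1\leq\cdots\leq f_5$, the integer $d^{(k)}_{ij}=f_i+f_j+e_k-(g+4)$ is monotone nondecreasing in each of $i,j,k$ (separately). Since a form of negative degree must vanish identically, each of the hypotheses produces a batch of identically zero entries of the matrices $A_k$:
\begin{itemize}
\item From $d^{(1)}_{15}<0$: $d^{(1)}_{1j}<0$ for $j=2,3,4,5$, so $a^{(1)}_{1j}=0$ for all such $j$.
\item From $d^{(1)}_{24}<0$: $d^{(1)}_{2j}<0$ for $j=3,4$, so $a^{(1)}_{23}=a^{(1)}_{24}=0$.
\item From $d^{(4)}_{12}<0$: $d^{(k)}_{12}<0$ for $k=1,2,3,4$, so $a^{(k)}_{12}=0$ for every $k$.
\end{itemize}

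Next I would apply \Cref{lem:singular-shape} with $(\I,\J,\K)=(1,2,1)$, at any point $p\in\PP^1$ where $a^{(1)}_{25}$ vanishes, with uniformizer $u$. The conditions of that lemma translate as follows: condition (i) becomes $a^{(1)}_{12}\equiv 0\pmod{u^2}$, which holds because $a^{(1)}_{12}=0$; condition (ii) becomes $a^{(k)}_{12}\equiv 0\pmod{u}$ for $k=1,\dots,4$, which holds because $a^{(k)}_{12}=0$ identically; condition (iii) requires $a^{(1)}_{1j}\equiv 0\pmod u$ for $1\le j\le 5$, which holds since $a^{(1)}_{11}=0$ by alternation and $a^{(1)}_{1j}=0$ identically for $j=2,3,4,5$; and condition (iv) requires $a^{(1)}_{i2}\equiv 0\pmod u$ for $1\le i\le 5$. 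Using alternation, the four nontrivial entries are $a^{(1)}_{32}=-a^{(1)}_{23}=0$, $a^{(1)}_{42}=-a^{(1)}_{24}=0$, and $a^{(1)}_{52}=-a^{(1)}_{25}$, the last of which is divisible by $u$ precisely by the hypothesis that $a^{(1)}_{25}$ vanishes at $p$.

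With all four hypotheses of \Cref{lem:singular-shape} verified, the lemma directly yields that $\cA$ is singular above $p$, completing the proof. There is no real obstacle here: the only thing to be careful about is the bookkeeping that shows the three degree inequalities, combined with the monotonicity of the $d^{(k)}_{ij}$, produce exactly the vanishings needed to make column $2$ and row $1$ of $A_1$ (together with the full $(1,2)$-entry of every $A_k$) vanish, at which point $(\I,\J,\K)=(1,2,1)$ is the unique choice that reduces condition (iv) to the stated hypothesis on $a^{(1)}_{25}$.
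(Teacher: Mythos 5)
Your proposal is correct and follows exactly the paper's approach: the paper's one-line proof simply asserts that the hypotheses force $\cA$ into the shape required by \Cref{lem:singular-shape}, and you have spelled out the bookkeeping showing that the choice $(\I,\J,\K)=(1,2,1)$ works, using monotonicity of $d^{(k)}_{ij}$ in each index to deduce all the needed identical vanishings and the hypothesis on $a^{(1)}_{25}$ to supply the remaining congruence.
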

\begin{proof}
If $u$ is a uniformizer at a point at which $a^{(1)}_{25}$ vanishes, then the matrix $\cA$ is in the shape given in \Cref{lem:singular-shape}.
\end{proof}

\Cref{prop:cond2} implies that if $d^{(1)}_{15}, d^{(1)}_{24}, d^{(4)}_{12} < 0$, then for $C$ to be a smooth curve it is necessary that $d^{(1)}_{25} = 0$, as claimed in \Cref{thm:deg-5}. 

\section{Minimization for quintic covers of \texorpdfstring{$\PP^1$}{P{\textasciicircum}1}}
\label{sec:minimization}

\subsection{Normal forms and the minimization theorem}
The main contribution of this section is to prove \Cref{prop:move-stuff} which states that singular degree $5$ covers $C \to \PP^1$ arise from sections $\cA \in H^0( \bigwedge\nolimits^2 \cF \otimes \cE \otimes \det \cE^{\vee} )$ of a very specific form, which we now describe.

\begin{defn}
\label{def:normal-form}
Let $\cA \in H^0( \bigwedge\nolimits^2 \cF \otimes \cE \otimes \det \cE^{\vee} )$ be a section. Let $p \in \PP^1$ be a point and let $u$ be a uniformizer at $p$. Let $1 \leq \I < \J \leq 5$ and $1 \leq \K \leq 4$. 
\begin{enumerate}[label=(\Alph*)]
    \item 
    \label{def:k-normal-form}
    We say $\cA$ is in \emph{normal form of type $(\K)$} at $p$ if $A_{\K} \equiv 0 \pmod{u}$.
    
    \item \label{def:ijk-normal-form}
    We say $\cA$ is in \emph{normal form of type $(\I,\J,\K)$} at $p$ if:
    \begin{enumerate}[label=(\roman*)]
        \item \label{nf:2} $a^{(k)}_{\I\J} \equiv 0 \pmod{u}$ for all $1 \leq k \leq 4$; 
        \item \label{nf:3} $a^{(\K)}_{\I j} \equiv 0 \pmod{u}$ for all $1 \leq j \leq 5$; 
        \item \label{nf:4} $a^{(\K)}_{i\J} \equiv 0 \pmod{u}$ for all $1 \leq i \leq 5$; and
        \item \label{nf:1} $a^{(\K)}_{\I\J} \equiv 0 \pmod{u^2}$.
    \end{enumerate}
\end{enumerate}
\end{defn}

\begin{remark}
By a direct calculation one may verify that if $\cA$ is in either of the normal forms in \Cref{def:normal-form} then $\cA$ is singular at the point above $u = 0$ where all coordinates in $\PP^3$ are $0$ except the $\K$-th coordinate. See \Cref{lem:singular-shape} for the proof in the case of the normal form $(\I,\J,\K)$. 

In particular, if $\cA$ is in a normal form of type $(\I,\J,\K)$, then the resulting scheme has (at best) a \emph{nodal} singularity. If $\cA$ is in a normal form of type $(\K)$ then the singular point is a quintuple point. We have not computed the singularity type. 
\end{remark}

The central goal of this section is to prove the following result which puts a singular section $\cA$ into a normal form.

\begin{proposition}
  \label{prop:move-stuff}
  Suppose that a section $\cA \in H^0(\bigwedge\nolimits^2 \cF \otimes \cE \otimes \det \cE^\vee )$ is singular above a point $p \in \PP^1$. Then there exists $g \in G$ such that if $\cB = g(\cA)$, then either:
  \begin{enumerate}[label=(\Alph*)]
      \item there exists an integer $1 \leq \K \leq 4$ such that $\cB$ is in normal form of type $(\K)$; or
      \item there exists a triple $(\I, \J, \K)$ with $1 \leq \I < \J \leq 5$ and $1 \leq \K \leq 4$ such that $\cB$ is in normal form of type $(\I, \J, \K)$.
  \end{enumerate}
\end{proposition}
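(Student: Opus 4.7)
\emph{Local setup and standardization.} The statement is local at $p$, so I fix a uniformizer $u$ at $p$ and view each $a^{(k)}_{ij}$ as a polynomial in $u$. Let $P = ([x_1 : x_2 : x_3 : x_4], p)$ be a singular point of $C$ above $p$. The residues of $\GL(\cE)$ and $\GL(\cF)$ at $p$ are lower-triangular parabolic subgroups of $\GL_4$ and $\GL_5$, whose block structures depend on coincidences among the $e_i$'s and $f_j$'s. Their orbits on $\PP(\cE|_p) = \PP^3$ (respectively on rank-$2$ tensors in $\bigwedge^2 \cF|_p$) are Schubert cells, each with a coordinate-axis representative. Applying an appropriate $g_4 \in \GL(\cE)$, I would move $P$ to the coordinate axis $[\delta_{1\K} : \delta_{2\K} : \delta_{3\K} : \delta_{4\K}]$ where $\K$ is the first index with $x_{\K} \ne 0$. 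With $P$ in this form, $P \in C$ forces the $5 \times 5$ alternating matrix $A_\K|_p$ to have rank at most $2$. If the rank is $0$ then $A_\K \equiv 0 \pmod u$, which is the normal form of type $(\K)$, and we are done. Otherwise I apply $g_5 \in \GL(\cF)$ to put $A_\K|_p$ into the standard form $e_{\I'} \wedge e_{\J'}$ for some pair $\I' < \J'$, which automatically yields $a^{(\K)}_{ij} \equiv 0 \pmod u$ whenever $\{i, j\} \ne \{\I', \J'\}$, so that the ``$\I$-row'' and ``$\J$-column'' conditions for $A_\K$ in the normal form of type $(\I, \J, \K)$ hold for any pair $\{\I, \J\}$ disjoint from $\{\I', \J'\}$.

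\emph{Jacobian analysis and final reduction.} I next extract the remaining congruences from the Jacobian of the five sub-Pfaffians at $P$. A direct calculation on the affine chart $x_\K = 1$ shows that the two rows indexed by $i \in \{\I', \J'\}$ vanish identically at $P$, while the other three form a $3 \times 4$ block whose row $i$ has entries $(a^{(k)}_{j_1 j_2}|_p)_{k \ne \K}$ in the first three columns and $(a^{(\K)}_{j_1 j_2})'(0)$ in the fourth, where $\{j_1, j_2\} = \{1, \ldots, 5\} \setminus \{i, \I', \J'\}$. The hypothesis that $\cA$ is singular at $P$ is then equivalent to this $3 \times 4$ block having rank at most $2$. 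The stabilizer of $(P, A_\K|_p)$ inside the residues of $G$ at $p$ still contains a $\GL_3$ mixing $\{A_k : k \ne \K\}$ (column operations on the first three columns) and a $\GL_3$ acting on the three indices of $\cF|_p$ outside $\{\I', \J'\}$ (row operations preserving $e_{\I'} \wedge e_{\J'}$). Using these to zero out a row of the block yields a pair $\{\I, \J\}$ disjoint from $\{\I', \J'\}$ with $a^{(k)}_{\I \J} \equiv 0 \pmod u$ for all $k$ and $a^{(\K)}_{\I \J} \equiv 0 \pmod{u^2}$, completing the normal form of type $(\I, \J, \K)$.

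\emph{Main obstacle.} The principal technical difficulty is that the residues of $G$ at $p$ are parabolic subgroups rather than the full $\GL_4 \times \GL_5$: the Schubert-cell standardizations of $P$ and of $A_\K|_p$, and the availability of the final row and column operations, must each be checked against the splitting types of $\cE$ and $\cF$. Fortunately a lower-triangular $\GL_3$ already suffices to create a zero row in a $3 \times 4$ matrix of rank $\le 2$ (using the nontrivial linear relation among the three rows to eliminate the last row, or the second if the first two are already dependent, or doing nothing if the first is already zero), so I expect this to reduce to a short and somewhat finicky bookkeeping exercise, with the rank-$0$ versus rank-$2$ split and the Jacobian reduction remaining uniform across splitting types.
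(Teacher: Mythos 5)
Your approach is essentially the paper's (which splits the argument across Lemmas~\ref{lemma:normal-form-5-CC} and~\ref{lemma:divisibility}), but with a genuine streamlining: by adjoining the $u$-derivative of $A_\K$ as a fourth column of the reduced Jacobian at $P$, you obtain a single $3 \times 4$ matrix whose rank-$\le 2$ condition simultaneously encodes the mod-$u$ vanishing (\Cref{def:normal-form}\ref{nf:2}) and the mod-$u^2$ vanishing (\Cref{def:normal-form}\ref{nf:1}), so that a single unipotent row-reduction produces both. This elegantly subsumes the paper's case split in Lemma~\ref{lemma:divisibility}: its case~\ref{b-mod-u}, in which the $u^2$-condition is shown to hold \emph{automatically} after a rather fiddly $\gamma_4, \gamma_5$-matrix computation, falls out of your uniform row-reduction because the $(0,0,0,*)$ row cannot lie in the span of two rows with independent first-three-entry blocks unless $* = 0$. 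Your bookkeeping remark that $U_3$ (rather than a full $\GL_3$) suffices to zero out a row of a $3\times 4$ matrix of rank $\le 2$ is also correct, and is the key point the paper makes after its proof of \Cref{prop:move-stuff}.

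There is, however, a concrete error in the standardization step: you move $P$ to the coordinate point at the \emph{first} index $\K$ with $x_\K \ne 0$, but since the residue of $\GL(\cE)$ is the block-\emph{lower}-triangular parabolic and $g_4$ acts on the projective fiber coordinates by $\mathbf{x}\mapsto g_4^{-t}\mathbf{x}$ (which is \emph{upper}-triangular unipotent when $g_4$ is lower-triangular unipotent), the index one can reach is the \emph{largest} nonzero one, exactly as the paper chooses in the proof of \Cref{lemma:normal-form-5-CC}. The same convention should be applied with care when fixing the pair $\I'<\J'$ for $A_\K|_p$ and when setting up the $U_3$-row operations. These are conventional rather than conceptual slips and the rest of your argument goes through verbatim once they are corrected, but as written the claim ``first index'' is false and would propagate into the Schubert-cell description.
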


\subsection{Aside: partial normalization}
Before proceeding with the proof of \Cref{prop:move-stuff} we record the following corollary. Although this result is not strictly necessary to prove our theorem, it is an interesting observation and was our motivation for proving \Cref{prop:move-stuff}. 

Work of Bhargava~\cite{B-hcl4} gives a generalization of Casnati's construction of smooth quintic covers of $\Spec(\ZZ)$. Namely, given $\cA \in \ZZ^4 \otimes \bigwedge^2 \ZZ^5$, Bhargava produces a finite flat degree $5$ cover $\pi \colon X \rightarrow \Spec(\ZZ)$ (and this construction works even when $\cA$ is the zero section). The cover need not be smooth, irreducible, or even Gorenstein. Bhargava's construction is to give an explicit basis and formulae for the multiplication coefficients of the ring $\pi_* \cO_X$ (cf. \cite{evan} for an extension to Dedekind domains).  

This construction immediately extends to the base scheme $\PP^1$ (in characteristic $0$), simply by working on affine charts. The only additional subtlety is that while vector bundles over $\Spec(\ZZ)$ are all trivial, vector bundles over $\PP^1$ need not be trivial. In particular, every section $\cA \in H^0(\bigwedge\nolimits^2 \cF \otimes \cE \otimes \det \cE^\vee )$ gives rise to a finite flat degree $5$ morphism $\pi \colon X \rightarrow \PP^1$. The scheme $X$ is isomorphic to the scheme $C$ defined by the vanishing of the $4 \times 4$ sub-Pfaffians of $\cA$ precisely when $X$ is Gorenstein. 

By abuse of notation, we call the bundle $\cF$ a first syzygy bundle of $X$, even though $X$ may not necessarily embed into $\PP(\cE)$. If $X$ is not Gorenstein, it may even have multiple first syzygy bundles. The scheme $X$ is singular precisely when the section $\cA$ is singular.

\begin{coro}
\label{lemma:normalize-quintic}
    If $\cA$ is a singular section giving rise to a finite flat cover $\pi \colon X \rightarrow \PP^1$, then there exists a birational morphism $\widetilde{X} \to X$ for which the following diagram commutes:
\[
\begin{tikzcd}
	\widetilde{X} && X \\
	& \PP^1
	\arrow[from=1-1, to=1-3]
	\arrow[from=1-1, to=2-2]
	\arrow[from=1-3, to=2-2]
\end{tikzcd}
\]
Moreover $\widetilde{X}$ has Tschirnhausen bundle $\widetilde{\cE} \cong \bigoplus_{k=1}^4 \cO(\widetilde{e}_k)$ and a first syzygy bundle $\widetilde{\cF} \cong \bigoplus_{i=1}^5 \cO(\widetilde{f}_i)$ where either:
\begin{enumerate}[label=(\Alph*)]
    \item there exists an integer $1 \leq \K \leq 4$ such that 
    \begin{equation*}
        \widetilde{e}_k = \begin{cases}
            e_k - 2 & \text{if $k = \K$, and} \\
            e_k - 1 & \text{otherwise,}
        \end{cases}
        \qquad \text{and} \qquad
        \widetilde{f}_i = f_i - 2 ,
    \end{equation*}
    or
    \item there exists a triple $(\I, \J, \K)$ such that 
    \begin{equation*}
        \widetilde{e}_k = \begin{cases}
            e_k-1  & \text{if $k = \K$, and} \\
            e_k & \text{otherwise,}
        \end{cases}
        \qquad \text{and} \qquad
        \widetilde{f}_i = \begin{cases}
            f_i - 1  & \text{if $i = \I$ or $\J$, and} \\
            f_i & \text{otherwise.}
        \end{cases}        
    \end{equation*}
\end{enumerate}
\end{coro}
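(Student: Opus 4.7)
By \Cref{prop:move-stuff}, after replacing $\cA$ with a $G$-translate---which changes neither the cover $\pi \colon X \to \PP^1$ nor the bundles $\cE$ and $\cF$---we may assume $\cA$ is in one of the normal forms of \Cref{def:normal-form} at the singular point $p \in \PP^1$. Fix a uniformizer $u$ at $p$, write $R = \cO_{\PP^1, p}$, and choose local splittings $\cE_p = \bigoplus_{k=1}^{4} R\epsilon_k$ and $\cF_p = \bigoplus_{i=1}^{5} R\phi_i$.

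The plan is to construct sub-bundles $\widetilde{\cE} \subset \cE$ and $\widetilde{\cF} \subset \cF$ (agreeing with $\cE, \cF$ away from $p$) together with a regular section $\widetilde{\cA}$ of $\bigwedge\nolimits^2 \widetilde{\cF} \otimes \widetilde{\cE} \otimes \det \widetilde{\cE}^\vee$ obtained by expanding $\cA$ in the new local basis. In case (A), let $\widetilde{\cE}_p$ have basis $\{u\epsilon_k : k \neq \K\} \cup \{u^2\epsilon_\K\}$ and $\widetilde{\cF}_p = u^2\cF_p$; in case (B), let $\widetilde{\cE}_p$ have basis $\{\epsilon_k : k \neq \K\} \cup \{u\epsilon_\K\}$ and let $\widetilde{\cF}_p$ have basis $\{\phi_i : i \notin \{\I, \J\}\} \cup \{u\phi_\I, u\phi_\J\}$. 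One checks these are sub-bundles with the claimed splitting types. Writing the new basis elements as $\widetilde{\epsilon}_k = u^{a_k}\epsilon_k$ and $\widetilde{\phi}_i = u^{b_i}\phi_i$, the coefficient $\widetilde{a}^{(k)}_{ij}$ of $\widetilde{\cA}$ in the new basis relates to the coefficient $a^{(k)}_{ij}$ of $\cA$ in the original basis by
\[
\widetilde{a}^{(k)}_{ij} \;=\; u^{-(b_i + b_j + a_k - \sum_\ell a_\ell)} \cdot a^{(k)}_{ij},
\]
where the exponent arises from the determinant twist $\det \widetilde{\cE}^\vee_p = u^{-\sum_\ell a_\ell} \det \cE^\vee_p$. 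A direct case analysis then shows that each $\widetilde{a}^{(k)}_{ij}$ lies in $R$ precisely when $\cA$ satisfies the vanishing conditions of the corresponding normal form in \Cref{def:normal-form}, so $\widetilde{\cA}$ extends to a well-defined global section.

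Applying the extension of Bhargava's construction to $\PP^1$ (sketched before \Cref{lemma:normalize-quintic}) to $\widetilde{\cA}$ yields a finite flat degree-$5$ cover $\widetilde{\pi}\colon \widetilde{X} \to \PP^1$ with Tschirnhausen bundle $\widetilde{\cE}$ and first syzygy bundle $\widetilde{\cF}$. The inclusion $\widetilde{\cE} \hookrightarrow \cE$ dualizes to $\cE^\vee \hookrightarrow \widetilde{\cE}^\vee$, inducing an $\cO_{\PP^1}$-module inclusion $\pi_* \cO_X \hookrightarrow \widetilde{\pi}_* \cO_{\widetilde{X}}$. Away from $p$ this inclusion coincides with the tautological identification of $\pi_* \cO_X$ with $\widetilde{\pi}_* \cO_{\widetilde{X}}$ (as $\cA = \widetilde{\cA}$ there), hence is an algebra isomorphism on $\PP^1 \setminus \{p\}$; because both sheaves are torsion-free, the multiplicativity relations then propagate globally, making the inclusion an $\cO_{\PP^1}$-algebra homomorphism. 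Taking relative $\Spec$ produces the desired birational morphism $\widetilde{X} \to X$.

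I expect the main technical obstacle to be the exponent bookkeeping in the second paragraph: one must verify in each case that the positive values among $b_i + b_j + a_k - \sum_\ell a_\ell$ correspond exactly to the vanishing conditions of the respective normal form in \Cref{def:normal-form}. This is a direct but case-dependent calculation, which in case (B) must also confirm that the nonpositive exponents (which render certain $\widetilde{a}^{(k)}_{ij}$ automatically divisible by extra powers of $u$) do not impose additional constraints.
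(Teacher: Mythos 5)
Your proposal is correct and follows essentially the same strategy as the paper: apply \Cref{prop:move-stuff} to put $\cA$ into normal form, construct $\widetilde{\cA}$ as a meromorphic $G$-translate (equivalently, re-expand $\cA$ in $u$-scaled local bases) and check via the same degree bookkeeping that $\widetilde{\cA}$ is a regular section of $\bigwedge^2 \widetilde{\cF} \otimes \widetilde{\cE} \otimes \det\widetilde{\cE}^\vee$, then obtain the birational morphism by showing the induced inclusion $\pi_*\cO_X \hookrightarrow \widetilde{\pi}_*\cO_{\widetilde{X}}$ is an $\cO_{\PP^1}$-algebra map. The only cosmetic difference is in closing that last step: the paper notes $g_4^{-1}$ and $g_5^{-1}$ have regular entries and invokes functoriality of Bhargava's construction directly over $\PP^1$, whereas you invoke it only over $\PP^1 \setminus \{p\}$ and extend by torsion-freeness of the target; both versions rely on the same covariance of the construction, and both also implicitly use that Bhargava's recipe furnishes a canonical $\cO_{\PP^1}$-module splitting $\pi_*\cO_X \cong \cO_{\PP^1} \oplus \cE^\vee$ so that the inclusion of duals is actually well-defined.
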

\begin{proof}
Let $u \in \CC[s,t]$ be a homogeneous linear form vanishing at the image of a singular point of $X$ under the morphism $\pi$. By \Cref{prop:move-stuff} we may assume without loss of generality that $\cA$ is in a normal form of type $(\I,\J,\K)$ or type $(\K)$. In the former case, we have: 
\begin{enumerate}[label=(\roman*)]
    \item $a_{\I\J}^{(k)} = u \xi_{\I\J}^{(k)}$ for some $\xi_{\I\J}^{(k)} \in \CC[s,t]$ for each $k \neq \K$;
    \item $a_{i\J}^{(\K)} = u \xi_{i\J}^{(\K)}$ for some $\xi_{i \J}^{(\K)} \in \CC[s,t]$ for each $i \neq \J$;
    \item $a_{j\I}^{(\K)} = u \xi_{j\J}^{(\K)}$ for some $\xi_{j \I}^{(\K)} \in \CC[s,t]$ for each $j \neq \I$; and
    \item $a_{\I\J}^{(\K)} = u^2 \xi_{\I\J}^{(\K)}$ for some $\xi_{i \J}^{(\K)} \in \CC[s,t]$.
\end{enumerate}

Consider $g = (g_4, g_5) \in \GL_4(\CC(\PP^1)) \times \GL_5(\CC(\PP^1))$ to be the pair of diagonal matrices where $g_4$ has $u^{-1}$ in the $(\K, \K)$ entry and $1$ elsewhere and $g_5$ has $u^{-1}$ in the $(\I, \I)$ and $(\J, \J)$ entries and $1$ elsewhere. Consider $\widetilde{\cA} \in H^0(\bigwedge\nolimits^2 \widetilde{\cF} \otimes \widetilde{\cE} \otimes \det \widetilde{\cE}^\vee )$ given by $\widetilde{\cA} = g(\cA)$. By construction, the entries of the matrices $\widetilde{A}_k$ are regular forms on $\PP^1$. Let $\widetilde{\pi} : \widetilde{X} \rightarrow \PP^1$ be the cover associated to $\widetilde{A}$. We immediately see that $\widetilde{X}$ has the desired Tschirnhausen and first syzygy bundles. 

We now show the existence of a birational morphism $\widetilde{X} \rightarrow X$ factoring $\widetilde{\pi}$. We have already chosen a splitting of $\cE$ and $\cF$ which induces splittings of $\widetilde{\cE}$ and $\widetilde{\cF}$. Upon fixing these splittings, Bhargava's construction canonically gives splittings of the $\cO_{\PP^1}$-algebras $\pi_* \cO_X$ and $\widetilde{\pi}_* \cO_X$.

With respect to these splittings, the $5 \times 5$ diagonal matrix
\begin{equation}
\label{eqn:matrix}
    \begin{bmatrix}
        1 & 0 \\
        0 & g_{4}^{-1}
    \end{bmatrix}.
\end{equation}
induces an inclusion of sheaves
\begin{equation}
\label{eqn:inclusion}
\iota: \pi_* \cO_X \xhookrightarrow{} \widetilde{\pi}_* \cO_{\widetilde{X}}.
\end{equation}
In other words, $\iota \in \Hom(\pi_* \cO_X, \widetilde{\pi}_* \cO_{\widetilde{X}})$ is given by the matrix in \eqref{eqn:matrix}.

Note that $\deg(g_4)^2 = \det(g_5)$ and $\widetilde{\cA} = g(\cA)$. Since $g_4^{-1}$ and $g_5^{-1}$ are matrices whose entries are are regular forms on $\PP^1$, the functoriality of Bhargava's construction implies that $\iota$ is an inclusion of $\cO_{\PP^1}$-algebras. Therefore $\iota$ induces a morphism of schemes over $\PP^1$:
\[
\begin{tikzcd}
	\widetilde{X} = \underline{\Spec}_{\PP^1}(\widetilde{\pi}_* \cO_{\widetilde{X}}) && \underline{\Spec}_{\PP^1}(\pi_* \cO_X) = X \\
	& \PP^1
	\arrow[from=1-1, to=1-3]
	\arrow[from=1-1, to=2-2]
	\arrow[from=1-3, to=2-2]
\end{tikzcd}
\]
To see that the morphism $\widetilde{X} \rightarrow X$ is birational, simply observe that it is an isomorphism away from the vanishing of $u$.

When $\cA$ is in normal form of type $(\K)$ the result follows from an analogous argument with $g = (g_4, g_5)$ chosen so that $g_4$ is the diagonal matrix with $u^{-2}$ in the $(\K,\K)$ entry and $u^{-1}$ elsewhere and $g_5$ is the scalar matrix $u^{-2} I$. 
\end{proof}

\subsection{Proof of \Cref{prop:move-stuff}}

The remainder of this section is devoted to proving \Cref{prop:move-stuff}. The main technical inputs are \Cref{lemma:normal-form-5-CC,lemma:divisibility} which state that the congruence conditions on the entries of a section $\cA$ in normal form can be obtained locally (in the ring $\CC[u]/u^2$) and may be achieved after only applying unipotent transformations.

Let $U_n \subset \GL_n$ be the subgroup consisting of lower triangular unipotent matrices, i.e., lower triangular matrices with $1$s on the diagonal. We now prove that the conditions of \Cref{prop:move-stuff} can be achieved modulo $u$. A similar argument can be found in \cite[Section~4.2]{B-geomsieve}. 

Recall that if $\cA \in R^4 \otimes \bigwedge^2 R^5$ we write $\cA(\mathbf{x}) = \sum_{k=1}^4 A_i x_i$. We write $\cA(\mathbf{x})_{ij} = \sum_{k=1}^4 a_{ij}^{(k)} x_k$ for the linear form which is the $(i,j)$-entry of the matrix $\cA(\mathbf{x})$. 

\begin{lemma}
    \label{lemma:normal-form-5-CC}
       Let $\overbar{\cA} \in \CC^4 \otimes \bigwedge^2 \CC^5$ be a section such that the scheme $\overbar{C} \subset \PP^3$ cut out by the vanishing of the $4 \times 4$ sub-Pfaffians of $\overbar{\cA}$ has a positive dimensional Zariski tangent space at a point $\overline{p}$. Then there exist unipotent matrices $\overbar{g} \in U_4(\CC) \times U_5(\CC)$, an integer $1 \leq \K \leq 4$, and distinct integers $1 \leq \I,\J,\L \leq 5$ such that if $\overbar \cB = \overbar{g}(\overbar \cA)$ then either:
       \begin{enumerate}[label=(\Alph*)]
           \item \label{norm-form-CC-1} $\overbar{B}_\K = 0$; or
           \item \label{norm-form-CC-2} $\overbar{B}_{k} \neq 0$ for all $k$, and $\bar{b}_{i \I}^{(\K)} = \bar{b}_{i \J}^{(\K)} = \bar{b}_{i \L}^{(\K)} = 0$ for all $1 \leq i \leq 5$, and either:
             \begin{enumerate}[label=(\roman*)]
             \item \label{norm-form-CC-3}
               $\overbar{\mathcal{B}}(\mathbf{x})_{\I\L} = \overbar{\mathcal{B}}(\mathbf{x})_{\J\L} = 0$; or
             \item \label{norm-form-CC-4}
               $\overbar{\mathcal{B}}(\mathbf{x})_{\I\J} = 0$ and the linear forms $\overbar{\mathcal{B}}(\mathbf{x})_{\I\L}$ and $\overbar{\mathcal{B}}(\mathbf{x})_{\J\L}$ are linearly independent.
             \end{enumerate}
        \end{enumerate}
\end{lemma}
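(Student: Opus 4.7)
The plan is to build $\overbar{g} \in U_4(\CC) \times U_5(\CC)$ in three stages, using the local structure at the singular point $\overbar{p}$ together with the freedom to permute the distinct integers $(\I, \J, \L)$.

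First, I would apply $g_4 \in U_4$ to move $\overbar{p}$ to a standard basis point $e_\K$ for some $1 \leq \K \leq 4$: the group $U_4$ acts on $\PP^3$ with orbits indexed by the pivot position of the coordinates, and each orbit contains the corresponding basis vector. After this, $\overbar{\cA}(e_\K) = A_\K$ must have rank at most $2$, since $\overbar{p} \in \overbar{C}$ and the $4 \times 4$ sub-Pfaffians of a $5 \times 5$ alternating matrix cut out the rank-at-most-$2$ locus. If $A_\K = 0$ we land in conclusion~(A) and are done; otherwise $A_\K$ has rank exactly $2$.

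Next, using $g_5 \in U_5$ I would normalize the $3$-dimensional kernel $K = \ker A_\K$ to the standard form $\langle e_{i_1}, e_{i_2}, e_{i_3}\rangle$, where $i_1 < i_2 < i_3$ are the pivot indices of $K$; this makes rows and columns $i_1, i_2, i_3$ of $A_\K$ vanish. A direct computation of the Jacobian of the five sub-Pfaffians at $e_\K$ (using $\partial \mathrm{Pf}(M)/\partial M_{ab} = \pm \mathrm{Pf}(M^{\hat{a}\hat{b}})$) shows that only the three sub-Pfaffians indexed by $i_1, i_2, i_3$ have nonzero gradient, and those gradients are proportional — with the nonzero scalar $(A_\K)_{mn}$, where $\{m,n\}$ is the complement of $\{i_1, i_2, i_3\}$ — to the coefficient vectors of the three linear forms $\alpha = \cA(\mathbf{x})_{i_1 i_2}$, $\beta = \cA(\mathbf{x})_{i_1 i_3}$, $\gamma = \cA(\mathbf{x})_{i_2 i_3}$. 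By construction these forms have vanishing $x_\K$-coefficient, so they live in three variables, and the hypothesis that the Zariski tangent space at $\overbar{p}$ has positive dimension translates exactly into the linear dependence of the triple $(\alpha, \beta, \gamma)$.

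Finally, I would normalize this dependent triple. The elements of $U_5$ whose transpose preserves $K$ act on $(\alpha, \beta, \gamma)$ through the induced $U_3$ action on $\bigwedge^2 K$, which a direct calculation shows is
\[
(\alpha, \beta, \gamma) \mapsto \bigl(\alpha,\; \beta + c\alpha,\; \gamma + a\beta + (ac - b)\alpha\bigr)
\]
for free parameters $a, b, c \in \CC$. Because $(\alpha, \beta, \gamma)$ is linearly dependent, a suitable choice of $(a, b, c)$ kills $\gamma$, leaving a triple of the shape $(\alpha, \beta, 0)$ where $\alpha, \beta$ are either both zero, exactly one zero, or linearly independent. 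Combined with the freedom to relabel $(\I, \J, \L)$ as any permutation of $(i_1, i_2, i_3)$ — which permutes $(\alpha, \beta, \gamma)$ up to sign — a short case analysis places the triple into the normal form required by either (B)(i) (both entries involving $\L$ vanish) or (B)(ii) (the $(\I,\J)$ entry vanishes and the other two are linearly independent). If any $B_k$ happens to vanish along the way, we simply invoke conclusion~(A) for that index instead.

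The main obstacle is this final step. Since our induced $U_5$-action on the triple is only lower unipotent rather than a full $\GL_3$, the ability to permute the labels $(\I, \J, \L)$ is essential for reaching the normal form. Organizing the case analysis by which entries of $(\alpha, \beta, \gamma)$ remain nonzero after the unipotent reduction — and choosing the correct permutation of pivot indices in each case — is the key technical content of the proof.
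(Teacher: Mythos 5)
Your proposal is correct and follows essentially the same route as the paper's proof: move $\overbar{p}$ to a coordinate point via $U_4$, note $A_\K$ has rank $\leq 2$ (rank $0$ gives case (A)), normalize $\ker A_\K$ to a coordinate $3$-plane via $U_5$, observe the tangent space is controlled by the triple of forms $\cA(\mathbf{x})_{ij}$ for $i,j$ in the kernel indices, and finally normalize the dependent triple via the induced lower-unipotent action on $\bigwedge^2 \CC^3$ together with relabeling of $(\I,\J,\L)$. One expository slip worth fixing: you assert ``a suitable choice of $(a,b,c)$ kills $\gamma$,'' but linear dependence of $(\alpha,\beta,\gamma)$ does not guarantee $\gamma \in \operatorname{span}(\alpha,\beta)$ (e.g.\ $\alpha = \beta = 0$, $\gamma \neq 0$), so the relabeling you invoke \emph{afterward} must in general be applied \emph{before} choosing $(a,b,c)$ — your closing paragraph already recognizes that the permutation-plus-unipotent interplay carries the real content, so this is a matter of ordering rather than a missing idea.
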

\begin{proof}
Let $\K$ be be the largest integer such that the coordinate of $\overbar{p}$ is nonzero there. Then we may apply a transformation in $U_4(\CC)$ and assume without loss of generality that $\overbar{p}$ has all coordinates $0$ except at $\K$, where it has coordinate $1$. Since $\overbar{p}$ lies on $\overbar{C}$, the matrix $\overbar{A}_\K$ has rank $2$.

Let $v = (v_1, ..., v_5)$ and $w = (w_1, ..., w_5)$ be vectors chosen so that $\overbar{A}_{\K} =  v \wedge w$ and consider the matrix
\[
    M = \begin{bmatrix}
        v_1 & v_2 & v_3 & v_4 & v_5 \\
        w_1 & w_2 & w_3 & w_4 & w_5 \\
    \end{bmatrix}.
\]
Observe that $\rank M \leq 1$ if and only if $\overbar{A}_\K = 0$ which is precisely \ref{norm-form-CC-1}. 

From now on assume $\overbar{A}_k \neq 0$ for all $k$ and that $\rank M > 1$. Elements of $U_5(\CC)$ act on $M$ by column operations, so after acting by a matrix $g \in U_5(\CC)$ we may assume that $v_i = w_i = 0$ for all $i \in S \subset \{1, ..., 5\}$ where $|S| = 3$. Let $\{\ell,m\}$ be the complement of $S$.

For each $i,j \in S$ we define the linear forms
\[
\phi_{ij} = \overbar{a}_{\ell m}^{(\K)} \, \overbar{\cA}(\mathbf{x})_{ij}.
\]
A direct calculation shows that the tangent space to $\overbar{C}$ at $\overbar{p}$ is cut out in $\PP^3$ by the linear forms $\phi_{ij}$ for each pair $i,j \in S$. By abuse of notation, write $\overbar{\cA}(\mathbf{x})_{ij}$ for the column vector whose $k^{\text{th}}$ row is the coefficient of $x_k$ in $\overbar{\cA}(\mathbf{x})_{ij}$ and consider the $3 \times 4$ matrix
\[
    N = \begin{bmatrix}
        \overbar{\cA}(\mathbf{x})_{\I\J} & \overbar{\cA}(\mathbf{x})_{\I\L} & \overbar{\cA}(\mathbf{x})_{\J\L}
    \end{bmatrix}
\]
for $S = \{\I,\J,\L\}$. The condition $\rank M > 1$ implies that $\overbar{a}_{\ell m}^{(\K)} \neq 0$, and therefore, since $\overbar{p}$ has a positive-dimensional tangent space, we have $\rank N \leq 2$.

Consider the subgroup $V \subseteq U_5$ whose action on $\CC^5$ fixes the coordinates not contained in $S$. The action of $V$ on the columns of $N$ is given by the action of $V$ on $\bigwedge^2 \CC^3$. Via the isomorphism $\bigwedge^2 \CC^3 \cong \CC^3$, we obtain a map $V \rightarrow \GL_3$ whose image is $U_3$. Therefore, upon ordering the columns of $N$ appropriately, we may make unipotent transformations on the columns of $N$.

If $\rank N \leq 1$ then, upon applying a transformation $h \in U_5(\CC)$, we may assume without loss of generality that two of the columns of $N$ are zero. After possibly re-labelling $\I,\J,\L$  condition \ref{norm-form-CC-3} holds. 
Else if $\rank N = 2$ then, upon applying a transformation $h \in U_5(\CC)$ and re-ordering $\I,\J,\L$, we may assume without loss of generality that $\cA(\mathbf{x})_{\I\J} = 0$ for some pair $\I,\J \in S$. The remaining two columns of $N$ are now linearly independent and \ref{norm-form-CC-4} holds.
\end{proof}

Using \Cref{lemma:normal-form-5-CC} we prove that the conditions of \Cref{def:normal-form} can be achieved modulo $u^2$.

\begin{lemma}
  \label{lemma:divisibility}
  Let $R = \CC[u]/u^2$ and let ${\cA} \in R^4 \otimes \bigwedge^2 R^5$ be such that the scheme cut out in $\PP^3_R$ by the $4 \times 4$ sub-Pfaffians of $\cA$ has Zariski tangent space of dimension at least $2$ at a point ${p}$. Then there exists $g \in U_4(\CC) \times U_5(\CC)$ such that either:
  \begin{enumerate}[label=(\Alph*)]
    \item $g(\cA)$ is in normal form of type $(\K)$ at $p$; or 
      \item $g (\cA)$ is in normal form of type $(\I, \J, \K)$ at $p$.
  \end{enumerate}
\end{lemma}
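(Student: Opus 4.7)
Proof plan. The plan is to adapt the proof of \Cref{lemma:normal-form-5-CC} to $R=\CC[u]/u^2$, using the mod-$u^2$ tangent-space hypothesis to force the single extra $u^2$-vanishing in condition~\ref{nf:1} of \Cref{def:normal-form}.

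First, observe that the Jacobian of the defining sub-Pfaffians of $C\subset\PP^3_R$ at $p$ has four columns (indexed by $dx_1,dx_2,dx_3,du$) whose first three give the Jacobian of $\overbar{C}\subset\PP^3$ at $\overbar{p}$, so $\dim T_pC\geq 2$ forces $\dim T_{\overbar p}\overbar{C}\geq 1$. Applying \Cref{lemma:normal-form-5-CC} to $\overbar{\cA}$ and lifting the resulting $\overbar{g}\in U_4(\CC)\times U_5(\CC)$ trivially to act on $\cA$, we may assume $\overbar{\cA}$ is already in one of the normal forms there. In case~\ref{norm-form-CC-1}, $\overbar{A}_\K=0$ gives $A_\K\equiv 0\pmod u$, which is normal form of type~$(\K)$, and we are done.

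In case~\ref{norm-form-CC-2} we may assume $p=([0{:}0{:}0{:}1],0)$ with $\K$ the last coordinate, $\overbar{A}_\K$ supported on the $(\ell,m)$-block with $S^c=\{\ell,m\}$ and $S=\{\I,\J,\L\}$, and $\bar{a}^{(\K)}_{ij}=0$ whenever $i\in S$ or $j\in S$; this last condition already supplies conditions~\ref{nf:3} and~\ref{nf:4} for every pair $\{\I,\J\}\subset S$. Writing $\cA=\cA_0+u\cA_1$, a direct computation of $dP_i|_p$ (extending the computation of the linear forms $\phi_{ij}$ in the proof of \Cref{lemma:normal-form-5-CC}) gives $dP_i|_p=0$ for $i\notin S$, and for $i\in S$
\[
dP_i|_p = \pm\bar{a}^{(\K)}_{\ell m}\bigl((a_0)^{(k_1)}_{pq},(a_0)^{(k_2)}_{pq},(a_0)^{(k_3)}_{pq},(a_1)^{(\K)}_{pq}\bigr),
\]
where $\{k_1,k_2,k_3\}=\{1,\ldots,4\}\setminus\{\K\}$ and $\{p,q\}=S\setminus\{i\}$. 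Since $\bar{a}^{(\K)}_{\ell m}\neq 0$, the hypothesis $\dim T_pC\geq 2$ becomes exactly that the $3\times 4$ matrix $M_{\mathrm{tang}}$ formed from these three vectors has $\rank\leq 2$.

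Finally, the subgroup $V\subset U_5$ preserving $\CC^S\oplus\CC^{S^c}$ leaves $\overbar{A}_\K$ fixed (so conditions~\ref{nf:3} and~\ref{nf:4} are preserved) and acts on the three rows of $M_{\mathrm{tang}}$ via the standard representation $V\twoheadrightarrow U_3\hookrightarrow\GL(\bigwedge\nolimits^2\CC^S)$, which in the lex ordering of pairs is the lower-triangular unipotent action by row operations. Picking a nontrivial dependency $\alpha R_1+\beta R_2+\gamma R_3=0$ among the rows (guaranteed by $\rank M_{\mathrm{tang}}\leq 2$) and splitting on whether $\gamma\neq 0$, $\gamma=0$ but $\beta\neq 0$, or $\gamma=\beta=0$, one produces in each case a (possibly trivial) such row operation that zeroes out one of the rows of $M_{\mathrm{tang}}$. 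Relabeling $\I,\J,\L$ so this zero row is indexed by $\{\I,\J\}$ yields conditions~\ref{nf:2} and~\ref{nf:1}, placing $\cA$ in normal form of type~$(\I,\J,\K)$. The main technical input is the identity $\partial P_i/\partial u|_p=\pm\bar{a}^{(\K)}_{\ell m}(a_1)^{(\K)}_{pq}$, involving only the $(a_1)^{(\K)}$-entries (and not $(a_1)^{(k)}$ for $k\neq\K$); this is precisely what transmits the mod-$u^2$ tangent-space condition to the single extra vanishing in condition~\ref{nf:1}.
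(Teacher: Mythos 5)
Your argument is correct and reaches the same conclusion, but it takes a genuinely cleaner route than the paper's.

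The paper's proof passes through \Cref{lemma:normal-form-5-CC} in full, then splits according to the two sub-cases \ref{norm-form-CC-3} and \ref{norm-form-CC-4}. In case \ref{norm-form-CC-3} it either declares victory or applies one further explicit unipotent correction to a mod-$u^2$ term; in case \ref{norm-form-CC-4} it claims that $a_{\I\J}^{(\K)}\equiv 0 \pmod{u^2}$ holds \emph{automatically}, and verifies this by first applying permutation matrices $\tau$ and block-triangular transformations $\gamma$, then exhibiting a full-rank $3\times 3$ minor of the Jacobian whose determinant is proportional to $\partial_u c_{45}^{(4)}$. Your proof replaces both sub-cases by a single rank argument on the $3\times 4$ matrix $M_{\mathrm{tang}}$: you compute that $dP_i|_p=0$ for $i\notin S$ and that for $i\in S$ the differential is $\pm\bar a^{(\K)}_{\ell m}\bigl((a_0)^{(k_1)}_{pq},(a_0)^{(k_2)}_{pq},(a_0)^{(k_3)}_{pq},(a_1)^{(\K)}_{pq}\bigr)$, so the tangent-space hypothesis becomes $\rank M_{\mathrm{tang}}\leq 2$, and a triangular row operation preserved by $V$ zeroes out a row, which simultaneously produces conditions \ref{nf:2} and \ref{nf:1}. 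In effect you observe that the extra $u$-column of $M_{\mathrm{tang}}$ is exactly the paper's ``automatic'' $u^2$-vanishing in disguise, so the distinction between sub-cases (where the paper's case \ref{norm-form-CC-4} argument does substantial work) becomes unnecessary. One point to tighten: the direction of the triangularity on the row action (whether the induced operation on $\bigwedge^2\CC^S$ lets you add earlier rows to later ones or the reverse) needs to be pinned down before you decide which coefficient of the dependency to split on; the argument is robust to either convention, but as written you commit to one direction without justifying it. Also verify that the $V\subset U_5$ you use (block-diagonal preserving $\CC^S\oplus\CC^{S^c}$) is nontrivial for every choice of $S$; this is fine, but it differs from the $V$ the paper uses in \Cref{lemma:normal-form-5-CC}, and the two are not interchangeable without a remark.
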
 
\begin{proof}
Because $p$ has Zariski tangent space of dimension $\geq 2$, the restriction of the Zariski tangent space to the closed subvariety $u = 0$ has positive dimension and thus we may apply \Cref{lemma:normal-form-5-CC} to $\overbar{\cA} = \cA \pmod{u}$. Suppose that $\cA$ \emph{cannot} be put into normal form of type $(\K)$ so that $\overbar{\cA}$ must be in the form \ref{norm-form-CC-2} from \Cref{lemma:normal-form-5-CC}. In particular we may assume without loss of generality that there exist distinct integers $1 \leq \I,\J, \L \leq 5$ with $\I < \J$ and an integer $1 \leq \K \leq 4$ such that $a_{i \I}^{(\K)} \equiv a_{i \J}^{(\K)} \equiv a_{i \L}^{(\K)} \equiv 0 \pmod u$ for all $1 \leq i \leq 5$ and either:
\begin{enumerate}[label=(\roman*)]
    \item \label{a-mod-u} $\cA(\mathbf{x})_{\I\L} \equiv \cA(\mathbf{x})_{\J\L} \equiv 0 \pmod{u}$; or
    \item \label{b-mod-u} $\cA(\mathbf{x})_{\I\J}  \equiv 0 \pmod{u}$ and $\cA_{I\L}(\mathbf{x})$ and $\cA_{J\L}(\mathbf{x})$ are linearly independent modulo $u$.
\end{enumerate}

In case \ref{a-mod-u}, if $a_{\I\L}^{(\K)} \equiv 0 \pmod{u^2}$ then $\cA$ is already in a normal form of type $(\I, \L, \K)$. Otherwise without loss of generality suppose $\I < \J$ and let $h \in U_5(\CC)$ be the matrix whose $(\I,\J)$-entry is the value of $a_{\J\L}^{(\K)} / a_{\I\L}^{(\K)}$ at $u = 0$ and whose other non-diagonal entries are $0$. After acting by $h$ we may assume that $a_{\J\L}^{(\K)} \equiv 0 \pmod{u^2}$ in which case $\cA$ is in a normal form of type $(\J, \L, \K)$.

In case \ref{b-mod-u} we claim that the equality $a_{\I\J}^{(\K)} \equiv 0 \pmod{u^2}$ holds without any further transformation. To see this, let $\tau = (\tau_4, \tau_5) \in \SL_4(\CC) \times \SL_5(\CC)$ be a pair of permutation matrices for which $\tau_4$ swaps $\K$ and $4$, and $\tau_5$ swaps $\{\I,\J\}$ for $\{4,5\}$. Writing $\mathcal{B} = \tau(\cA)$, we have $\cB(\mathbf{x})_{45} \equiv 0 \pmod u$, and $b_{ij}^{(4)} \equiv 0 \pmod u$ for all $1 \leq i \leq 5$ and $j = 4,5$. That is, $\tau$ swaps the roles of the matrices $A_\K$ and $A_4$ and swaps the roles of the $\{\I,\J\}$ rows with the $\{4,5\}$ rows. In particular we have $a_{\I\J}^{(\K)} = \pm b_{45}^{(4)}$. Under this transformation, the point $p$ is sent to the point ${q} = ([0:0:0:1],0)$ in the scheme in $\PP^3_R$ cut out by the $4 \times 4$ sub-Pfaffians of $\cB$. 

Now consider a transformation $\gamma = (\gamma_4, \gamma_5) \in \SL_4(\CC) \times \SL_5(\CC)$ of the form
\begin{equation*}
    \gamma_4 = \begin{bmatrix}
      * & * & * & * \\
      * & * & * & * \\
      * & * & * & * \\
      0 & 0 & 0 & 1 
    \end{bmatrix}  
    \qquad
    \gamma_5 = \begin{bmatrix}
        * & * & * & 0 & 0 \\
        * & * & * & 0 & 0 \\
        * & * & * & 0 & 0 \\
        0 & 0 & 0 & 1 & 0 \\
        0 & 0 & 0 & 0 & 1 
    \end{bmatrix}.
\end{equation*}
Let $\mathcal{C} = \gamma(\mathcal{B})$ and note that $b_{45}^{(4)} = \pm c_{45}^{(4)}$ remains unchanged up to sign and $([0:0:0:1], 0)$ is a point of the scheme defined by $\mathcal{C}$ whose tangent space has dimension at least $2$. By choosing $\gamma_5$ appropriately, we may assume that modulo $u$ 
\begin{equation*}    
    C_4 \equiv 
    \begin{bmatrix}
      0 & \rho & 0 & 0 & 0 \\
      -\rho & 0 & 0 & 0 & 0 \\
      0 & 0 & 0 & 0 & 0 \\
      0 & 0 & 0 & 0 & 0 \\
      0 & 0 & 0 & 0 & 0 
    \end{bmatrix}
\end{equation*}
and by choosing $\gamma_4$ we may assume that modulo $u$ we have
\begin{equation*}
  C_1 \equiv \begin{bmatrix}
      0 & 0 & * & * & * \\
      0 & 0 & * & * & * \\
      * & * & 0 & \epsilon & 0 \\
      * & * & -\epsilon & 0 & 0 \\
      * & * & 0 & 0 & 0 
  \end{bmatrix}
  \qquad 
  C_2 \equiv \begin{bmatrix}
      0 & 0 & * & * & * \\
      0 & 0 & * & * & * \\
      * & * & 0 & 0 & \lambda \\
      * & * & 0 & 0 & 0 \\
      * & * & -\lambda & 0 & 0 
  \end{bmatrix}
  \qquad
  C_3 \equiv \begin{bmatrix}
      0 & 0 & * & * & * \\
      0 & 0 & * & * & * \\
      * & * & 0 & 0 & 0 \\
      * & * & 0 & 0 & 0 \\
      * & * & 0 & 0 & 0 
  \end{bmatrix}.
\end{equation*}
where $\rho, \epsilon,\lambda \in \{0,1\}$. Because $\rank A_{\K} = 2$ we may assume $\rho = 1$, and because $\cA_{J\L}(\textbf{x})$ and $\cA_{I\L}(\textbf{x})$ are linearly independent we may assume $\epsilon = \lambda = 1$.

Now, a direct calculation shows that on the affine chart where $x_4 \neq 0$ the Jacobian matrix of the $4 \times 4$ sub-Pfaffians of ${\mathcal{C}}(\mathbf{x})$ evaluated at $([0:0:0:1], 0)$ contains the matrix 
\[
\begin{bmatrix}
    0 & \rho \epsilon & 0 \\
    0 & 0 & \rho \lambda \\
    \rho \tfrac{\partial}{\partial u} c_{45}^{(4)} & \rho\tfrac{\partial}{\partial u} c_{34}^{(4)}  & \rho \tfrac{\partial}{\partial u} c_{35}^{(4)}  
\end{bmatrix}
\]
as a full rank submatrix. The scheme defined by $\mathcal{C}$ is assumed to have tangent space of dimension at least $2$ at the point $([0:0:0:1], 0)$ and $\rho = \lambda = \epsilon = 1$. Thus $a_{45}^{(4)} = \pm c_{45}^{(4)} \equiv 0 \pmod{u^2}$, as required.
\end{proof}

\begin{proof}[Proof of \Cref{prop:move-stuff}]
    The claim follows from \Cref{lemma:normal-form-5-CC,lemma:divisibility} upon noting that for any $p \in \PP^1$ the image of $G$ under the evaluation map 
    \[
    H^0(\GL(\cE) \times \GL(\cF)) \to ( \GL(\cE) \times \GL(\cF) ) |_p \cong \GL_4(\CC) \times \GL_5(\CC)
    \]contains $U_4(\CC) \times U_5(\CC)$.
\end{proof}

\begin{remark}
  Results similar to \Cref{lemma:divisibility} and \Cref{prop:move-stuff} can be found in several places in the literature, most notably in the arithmetic context of sieving arguments in quintic orders in number fields \cite{B-hcl4}. A new subtlety in our setting is that, in order to lift to $\GL(\cE) \times \GL(\cF)$, it is essential that the claim in \Cref{lemma:divisibility} is obtained after a transformation with $\overbar{g} \in U_4(\CC) \times U_5(\CC)$ (as opposed to $\GL_4(\CC) \times \GL_5(\CC))$. This is because whenever both $e_1 < \dots < e_4$ and $f_1 < \dots < f_5$ the matrices in the image of the evaluation map $H^0(\GL(\cE) \times \GL(\cF)) \to (\GL(\cE) \times \GL(\cF) )|_p$ have zeros above the diagonal. 
\end{remark}

\section{Dimensions of singular loci and the proof of \texorpdfstring{\Cref{thm:deg-5}}{Theorem 1.5}}
\label{sec:dim-counting}

When $\cE$ and $\cF$ satisfy the conditions of \Cref{thm:deg-5}, we prove that a general member of $\cM_{\cE,\cF}$ is smooth by dimension counting. 

\subsection{Normal forms of type $(\K)$}
Mimicking the proof of Bertini's theorem, we first deal with the simpler case where a section $\cA \in H^0 \Big(\bigwedge\nolimits^2 \cF \otimes \cE \otimes \det \cE^\vee \Big)$ can be placed in a normal form of type~$(\K)$.

\begin{proposition}
    \label{prop:case-K}
Let $\cE$ and $\cF$ be vector bundles satisfying the conditions of \Cref{thm:deg-5}. A general section $\cA \in H^0 \Big(\bigwedge\nolimits^2 \cF \otimes \cE \otimes \det \cE^\vee \Big)$ does not have a $G$-translate which is in a normal form of type $(\K)$ for any $1 \leq \K \leq 4$.
\end{proposition}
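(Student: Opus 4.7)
The plan is to reformulate the normal-form-$(\K)$ condition as a linear-dependence statement about the fiber matrices $A_k(p)$ and then to perform a direct dimension count on an appropriate incidence variety. Set $V := H^0\bigl(\bigwedge\nolimits^2 \cF \otimes \cE \otimes \det \cE^\vee\bigr)$ and let $Y \subseteq V$ denote the locus of sections $\cA$ admitting a $G$-translate in normal form of type $(\K)$ for some $1 \leq \K \leq 4$; the goal is to show $Y \subsetneq V$.

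First I would carry out the reformulation. Because $e_1 \leq \cdots \leq e_4$, the image of $\Aut(\cE)$ under evaluation at a point $p \in \PP^1$ is the block upper-triangular subgroup of $\GL(\cE|_p) \cong \GL_4(\CC)$, so acting by $\Aut(\cE)$ can replace $A_\K$ by $A_\K + \sum_{j > \K} c_j A_j$ for any scalars $c_j \in \CC$. Combined with the $\Aut(\cF)$-conjugation action (which preserves linear dependencies among matrices), this shows that $\cA$ has a $G$-translate in normal form of type $(\K)$ at $p$ if and only if $A_\K(p) \in \mathrm{span}\bigl(A_{\K+1}(p), \ldots, A_4(p)\bigr)$ in $\bigwedge\nolimits^2 \cF|_p$. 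Since these conditions are nested across $\K$, taking the union yields $Y = \{\cA \in V : (A_1(p), \ldots, A_4(p)) \text{ is linearly dependent for some } p \in \PP^1\}$.

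Next I would bound $\dim Y$ via the incidence variety
\[
\widetilde{Z} := \Bigl\{ (\cA, p, [c]) \in V \times \PP^1 \times \PP^3 : \textstyle\sum_k c_k A_k(p) = 0 \Bigr\},
\]
whose projection to $V$ is $Y$. Stratify $\PP^3$ by the support $I \subseteq \{1, 2, 3, 4\}$ of $c$, and set $m := \max I$. At any $(p, [c])$ in the stratum, the vanishing $\sum_k c_k A_k(p) = 0$ decomposes into one linear equation per pair $1 \leq i < j \leq 5$, and the $(i,j)$-equation is a nontrivial linear condition on $\cA$ precisely when some $a^{(k)}_{ij}$ with $k \in I$ has $d^{(k)}_{ij} \geq 0$. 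Using condition \ref{cond1} of \Cref{thm:deg-5} and the monotonicity of $d^{(k)}_{ij}$ in $(i,j,k)$, this occurs exactly when $(i,j) \in T_m := \{(i,j) : 1 \leq i < j \leq 5,\ i + j + m \geq 8\}$; direct enumeration gives $|T_1| = 4,\, |T_2| = 6,\, |T_3| = 8,\, |T_4| = 9$. Since the $(i,j)$-equations live in disjoint summands of $V$, they are linearly independent, so the fiber has codimension exactly $|T_m|$ in $V$.

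Assembling the contributions, the stratum indexed by $I$ contributes at most $\dim V - |T_{\max I}| + (|I| - 1) + 1 = \dim V + |I| - |T_{\max I}|$ to $\dim \widetilde{Z}$. Using $|I| \leq m$ and inspecting the four cases $m \in \{1, 2, 3, 4\}$ shows $|I| - |T_m| \leq -3$, whence $\dim Y \leq \dim \widetilde{Z} \leq \dim V - 3$ and $Y$ is a proper closed subset of $V$. The main obstacle I anticipate is the equivalence in the second paragraph: verifying carefully that the fiberwise image of $\Aut(\cE)$ is exactly the block upper-triangular subgroup, and that every span relation can therefore be realized as an honest normal form by lifting scalar coefficients to sections of the relevant line bundles. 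Once this is in hand, the remaining dimension count is an elementary combinatorial verification from condition \ref{cond1}.
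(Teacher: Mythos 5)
Your argument is essentially the same as the paper's: reduce the normal-form-$(\K)$ condition to a fiberwise linear dependence of the matrices $A_1(p),\dots,A_4(p)$, then bound the dimension of the incidence variety by stratifying the dependence vector by its support and counting the resulting linear conditions (your sets $T_m$ coincide with the paper's hyperplanes $H$ of dimension $4,6,8,9$). The paper packages the incidence variety via the filtration $V_k = \bigoplus_{i\le k}\cO(-e_i)$ of $\cE^\vee$ and bounds by $\dim\PP(V_k)=k$, which is the same computation as your $|I|\le m$ bound.

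One thing you flag as the main obstacle is in fact backwards: because $\Hom(\cO(e_l),\cO(e_k))$ has nonzero global sections only when $e_k\ge e_l$, the image of $\Aut(\cE)$ in $\GL_4(\CC)$ at a point $p$ is block \emph{lower}-triangular, not upper-triangular (the paper records exactly this observation in the remark following \Cref{prop:move-stuff}). Consequently $g_4$ replaces $A_\K$ by a combination of $A_1,\dots,A_\K$, so the per-$\K$ equivalence should read $A_\K(p)\in\operatorname{span}\bigl(A_1(p),\dots,A_{\K-1}(p)\bigr)$ rather than the span of the later matrices; the ``if'' direction of your stated equivalence fails for the same reason. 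Fortunately this does not matter for what follows: the union over $\K$ is still exactly the locus where $A_1(p),\dots,A_4(p)$ are linearly dependent (take $\K$ to be the largest index occurring in the dependence), so your description of $Y$, the stratification by $m=\max I$, and the count via $T_m$ all go through unchanged. You should also soften ``precisely when $(i,j)\in T_m$'' to ``at least when $(i,j)\in T_m$'' --- condition~\ref{cond1} gives $d^{(m)}_{ij}\ge 0$ for $i+j+m\ge 8$ but does not rule out additional nonnegative $d^{(k)}_{ij}$ when $i+j+m<8$ --- but since you only need a lower bound on codimension this inequality is in the harmless direction.
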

\begin{proof}
A section $\cA$ has a $G$-translate which is in normal form of type $(\K)$ if and only if there exists a point $p \in \PP^1$ and a point $\boldsymbol{\xi} = (\xi_1, \dots ,\xi_4) \in \cE^{\vee}_p \simeq \CC^4$ such that the evaluation $\cA(\boldsymbol{\xi})\vert_p = 0$. 

Writing $V_j = \oplus_{i=1}^j \cO(-e_i)$, the vector bundle $\cE^{\vee}$ has a natural filtration:
\[
    0 = V_0 \subseteq V_1 \subseteq V_2 \subseteq V_3 \subseteq V_4 = \cE^{\vee}.
\]
For a point $p \in \PP^1$, let $V_{k,p}$ be the restriction of $V_k$ to $p$. We show that for all $k \in \{1,\dots, 4\}$ if $\cA$ is a general section then there do not exist points $p \in \PP^1$ and $\boldsymbol{\xi} = (\xi_1 , \dots ,\xi_4) \in V_{k,p} \setminus V_{k-1,p}$ such that $\cA(\boldsymbol{\xi})\vert_p = 0$.

For each $k \in \{1,\dots, 4\}$ consider the $\CC$-vector space $Z \subseteq H^0 \Big(\bigwedge\nolimits^2 \cF \otimes \cE \otimes \det \cE^\vee \Big)$ consisting of sections with $\cA(\mathbf{x})_{ij} = 0$ for all pairs $(i,j)$ with $i +j + k < 8$. Let $H \subseteq  \bigwedge^2 \CC^5$ be the hyperplane whose elements are $5 \times 5$ alternating matrices whose $(i,j)$-entry is zero if $i +j + k < 8$. Fix a point $p \in \PP^1$ and $\boldsymbol{\xi} = (\xi_1 , \dots, \xi_4) \in V_{k,p} \setminus V_{k-1,p}$. Writing $P = (p,\boldsymbol{\xi})$ there is a natural evaluation map $\ev_P : Z \to H$ given by $\cA \mapsto \cA(\boldsymbol{\xi})|_p $.

Since $\cE$ and $\cF$ satisfy the conditions of \Cref{thm:deg-5}, the evaluation map is surjective (because for every $1 \leq i < j \leq 5$  with $i + j + k \geq 8$, we have $d_{ij}^{(k)} \geq 0$). Therefore, 
\[
    \dim(\ker (\ev_P)) = \dim(Z) - \dim(H).
\]
Moreover, we have:
\[
\dim(H) = 
\begin{cases}
  4 & \text{if } k = 1, \\
  6  & \text{if } k = 2, \\
  8  & \text{if } k = 3, \\
  9 & \text{if } k = 4. \\
\end{cases}
\]
Therefore, 
\begin{equation}
\label{eqn:dim-bound-K}
 \dim\Big (\bigcup_P \ker(\ev_P)\Big) \leq  \dim(Z) - \dim(H) + \dim(\PP(V_k)) = \dim(Z) - \dim(H) + k < \dim (Z). 
\end{equation}
\Cref{eqn:dim-bound-K} shows that a general element of $Z$ does \emph{not} have a $G$-translate in normal form of type $(\K)$.
\end{proof}

\subsection{Normal forms of type $(\I,\J,\K)$}
We now turn to the harder case. 
We first define a pair of varieties
\[
\marksing_{\I\J\K} \subset H^0 \Big(\bigwedge\nolimits^2 \cF \otimes \cE \otimes \det \cE^\vee \Big) \quad \text{and} \quad
\widetilde{\marksing_{\I\J\K}} \subseteq H^0 \Big(\bigwedge\nolimits^2 \cF \otimes \cE \otimes \det \cE^\vee \Big).
\]

\vspace{1em}
\begin{defn}\hfill
\begin{enumerate}
    \item We define $\marksing_{\I\J\K}$ to be the variety whose points are those $\cA \in H^0(\bigwedge^2 \cF \otimes \cE \otimes \det \cE^\vee )$ in normal form of type $(\I,\J,\K)$ at the point $0 \in \PP^1$.
    \item We define $\widetilde{\marksing_{\I\J\K}}$ to be the variety whose points are those $\cA \in H^0(\bigwedge^2 \cF \otimes \cE \otimes \det \cE^\vee)$ in normal form of type $(\I,\J,\K)$ at \emph{some} point $p \in \PP^1$.
\end{enumerate}
\end{defn}

Consider the natural map
\[
    \Phi \colon \widetilde{\marksing_{\I\J\K}} \times G \rightarrow H^0 \Big(\bigwedge\nolimits^2 \cF \otimes \cE \otimes \det \cE^\vee \Big).
\]
In what follows, we compute the codimension of the image of $\Phi$ and show that it is positive when $\cE$ and $\cF$ satisfy the conditions of \Cref{thm:deg-5}. For the remainder of this section we write $\codim G_{\I\J\K}$ for the codimension of $G_{\I\J\K}$ in $G$ and we write $\codim \marksing_{\I\J\K}$ for the codimension of $\marksing_{\I\J\K}$ in the space $H^0(\bigwedge^2 \cF \otimes \cE \otimes \det \cE^\vee)$. We write $\nu = \dim \widetilde{\marksing_{\I\J\K}} - \dim \marksing_{\I\J\K}$.

\begin{defn}
Let $u$ be a uniformizer for the point $0 \in \PP^1$. Define $G_{\I\J\K} \subseteq G$ to be the subgroup consisting of pairs $(g,h) \in G$ such that:
\begin{enumerate}[label=(\roman*)]
    \item $g_{k \K} \equiv 0 \pmod{u}$ for each $k \neq \K$; and 
    \item $h_{i j} \equiv 0 \pmod{u}$ for each $i \in \{1,\dots,5\}\setminus\{\I,\J\}$ and $j \in \{\I,\J\}$. 
\end{enumerate}
\end{defn}

\begin{defn}
We say that $(\I,\J,\K)$ is a triple if we have $1 \leq \I < \J \leq 5$ and $1 \leq \K \leq 4$. Given vector bundles $\cE$ and $\cF$ on $\PP^1$, we say that a triple $(\I,\J,\K)$ is \emph{maximal} if each of the following hold:
\begin{enumerate}[label=(\roman*)]
    \item for all $k > \K$, we have $e_{k} > e_{\K}$;
    \item for all $j > \J$, we have $f_{j} > f_{\J}$; and 
    \item for all $i > \I$ with $i \neq \J$, we have $f_{i} > f_{\I}$.
\end{enumerate}
\end{defn}

The following proposition provides the basis of our dimension counting argument, in order to prove \Cref{thm:deg-5}.

\begin{proposition}
\label{lem:main-counting-lemma}
Fix vector bundles $\cE$ and $\cF$. If for every maximal triple $(\I, \J, \K)$ we have 
\[
\nu + \codim G_{\I\J\K} < \codim \marksing_{\I\J\K}
\]
then a general section $\cA \in H^0\Big(\bigwedge\nolimits^2 \cF \otimes \cE \otimes \det \cE^\vee \Big)$ cannot be put in normal form of type $(\I,\J,\K)$ for any triple $(\I,\J,\K)$. 
\end{proposition}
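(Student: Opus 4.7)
The plan is to bound the dimension of each image $G \cdot \widetilde{\marksing_{\I\J\K}}$ separately and take a finite union over triples. The key inequality I aim to establish for each maximal triple $(\I,\J,\K)$ is
\[
\dim\bigl(G \cdot \widetilde{\marksing_{\I\J\K}}\bigr) \leq \dim H^0\bigl(\bigwedge\nolimits^2 \cF \otimes \cE \otimes \det \cE^\vee\bigr) - \codim \marksing_{\I\J\K} + \nu + \codim G_{\I\J\K},
\]
which by hypothesis is strictly less than $\dim H^0\bigl(\bigwedge\nolimits^2 \cF \otimes \cE \otimes \det \cE^\vee\bigr)$. Hence each $G \cdot \widetilde{\marksing_{\I\J\K}}$ is a proper closed subvariety.

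First I would reduce to maximal triples. If $(\I,\J,\K)$ is not maximal, say $e_\K = e_k$ for some $k > \K$, then the summands $\cO(e_\K)$ and $\cO(e_k)$ of $\cE$ are interchangeable by an element of $G$: take $(g_4,g_5)$ with $g_4$ the permutation swapping those two summands, rescaled by a scalar so that $\det(g_4)^2 = \det(g_5)$ using the scalar freedom in $G$. Applied to any $\cA \in \widetilde{\marksing_{\I\J\K}}$ this produces a section in $\widetilde{\marksing_{\I\J k}}$, and the analogous maneuver treats equalities $f_{\J} = f_j$ or $f_{\I} = f_i$. Iterating yields $G \cdot \widetilde{\marksing_{\I\J\K}} \subseteq G \cdot \widetilde{\marksing_{\I'\J'\K'}}$ for some maximal triple $(\I',\J',\K')$.

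Next I would check that $G_{\I\J\K}$ stabilizes $\marksing_{\I\J\K}$ by expanding the action of $(g_4, g_5)$ on the quadruple $(A_1, A_2, A_3, A_4)$ and verifying that the congruences in \Cref{def:ijk-normal-form} are preserved. The defining divisibility conditions on the entries of $(g_4, g_5) \in G_{\I\J\K}$ are designed precisely to force every cross-term that could contribute to an entry constrained by the normal form to be divisible by $u$ (respectively $u^2$ for the $(\I,\J)$-entry of $A_\K$). Consequently, for the map
\[
\Phi \colon \widetilde{\marksing_{\I\J\K}} \times G \longrightarrow H^0\bigl(\bigwedge\nolimits^2 \cF \otimes \cE \otimes \det \cE^\vee\bigr),\qquad (\cA, g) \mapsto g \cdot \cA,
\]
each fiber over an image point $g_0 \cA_0$, with $\cA_0$ in normal form at a point $p_0$, contains the coset $g_0 \cdot G_{\I\J\K}^{p_0}$, where $G_{\I\J\K}^{p_0}$ denotes the analogous subgroup defined using a uniformizer at $p_0$. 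Since $G_{\I\J\K}^{p_0}$ has the same dimension as $G_{\I\J\K}$, one obtains $\dim \Phi^{-1}(g_0 \cA_0) \geq \dim G_{\I\J\K}$ and hence the displayed dimension bound on the image.

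Taking the union over the finitely many maximal triples preserves properness, and by the reduction step this union contains $G \cdot \widetilde{\marksing_{\I\J\K}}$ for \emph{every} triple; a general section therefore lies in its complement. The main obstacle I anticipate is the reduction to maximal triples: one must carefully produce honest elements of $G$—honoring the determinant compatibility $\det(g_4)^2 = \det(g_5)$—that effect the required summand permutations whenever degrees coincide. The stabilizer verification and the fiber-dimension count are then essentially routine bookkeeping.
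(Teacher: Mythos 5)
Your proof is correct and follows essentially the same approach as the paper: reduce to maximal triples via relabeling of equal-degree summands (the content of the paper's Lemma~\ref{lem:maximality}), verify that $G_{\I\J\K}$ stabilizes $\marksing_{\I\J\K}$, then bound $\dim\Phi(\widetilde{\marksing_{\I\J\K}} \times G)$ by a fiber-dimension count using the stabilizer, and conclude properness by taking a finite union over maximal triples. The only superficial difference is presentational: you argue directly that fibers of $\Phi$ have dimension at least $\dim G_{\I\J\K}$ and apply the image-dimension formula, whereas the paper splits the bound into $\dim\Phi(\widetilde{\marksing_{\I\J\K}} \times G) \leq \nu + \dim\Phi(\marksing_{\I\J\K} \times G)$ and then estimates the second factor; these amount to the same bookkeeping. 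One small imprecision worth tidying: the fiber $\Phi^{-1}(g_0\cA_0)$ lives in $\widetilde{\marksing_{\I\J\K}}\times G$, not in $G$, so rather than ``contains the coset $g_0 \cdot G_{\I\J\K}^{p_0}$'' you should say it contains the image of the map $h \mapsto (h\cA_0,\, g_0 h^{-1})$ on $G_{\I\J\K}^{p_0}$; the dimension conclusion is unaffected.
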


Before proceeding with the proof of \Cref{lem:main-counting-lemma} we first prove the following lemma.
\begin{lemma}
\label{lem:maximality}
Fix vector bundles $\cE$ and $\cF$ and a triple $(\I,\J,\K)$. Then, there exists a maximal triple $(\I',\J',\K')$ such that $\codim G_{\I\J\K} = \codim G_{\I'\J'\K'}$ and $\codim \marksing_{\I\J\K} = \codim \marksing_{\I'\J'\K'}$.    
\end{lemma}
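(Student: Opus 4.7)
The plan is to construct $(\I', \J', \K')$ by replacing each of $\K$, $\J$, $\I$ with the largest index sharing the same scrollar (resp.\ first syzygy) invariant. Explicitly, I set
\[
\K' \coloneqq \max\{k : e_k = e_\K\}, \quad \J' \coloneqq \max\{j : f_j = f_\J\}, \quad \I' \coloneqq \max\{i : f_i = f_\I, \; i \neq \J'\}.
\]
First I check that $\I' < \J'$: if $f_\I < f_\J$ this follows from monotonicity of the $f_i$, and if $f_\I = f_\J$ then $\I', \J'$ both lie in $\{i : f_i = f_\J\}$ with $\J'$ the maximum and $\I'$ explicitly excluded. The maximality of $(\I', \J', \K')$ is then immediate: any $k > \K'$ with $e_k = e_{\K'}$ would contradict the maximality of $\K'$ in $\{k : e_k = e_\K\}$, and the arguments for $\J'$ and $\I'$ (using $i \neq \J'$ in the latter) are analogous.

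The core of the proof is to verify the two codimension equalities. My approach is to observe that both $\codim G_{\I\J\K}$ and $\codim \marksing_{\I\J\K}$ can be written as sums of indicator terms depending only on the numerical data $e_\K, f_\I, f_\J$ together with the unordered tuples $(e_1, \dots, e_4)$ and $(f_1, \dots, f_5)$. For $G_{\I\J\K}$, the condition $g_{k\K} \equiv 0 \pmod{u}$ imposes a nontrivial linear condition precisely when $e_k - e_\K \geq 0$ (so that $g_{k\K}$ is a nontrivial section of $\cO(e_k - e_\K)$), and an identical analysis applies to the entries $h_{ij}$ with $i \notin \{\I, \J\}$, $j \in \{\I, \J\}$. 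For $\marksing_{\I\J\K}$, each congruence condition in \Cref{def:normal-form} contributes $\min(m, d^{(k)}_{ij} + 1)_+$ linear conditions, where $m \in \{1, 2\}$ and $d^{(k)}_{ij} = f_i + f_j + e_k - (g+4)$ depends on $e_\K, f_\I, f_\J$ only through the corresponding values (not indices).

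The only real obstacle is careful bookkeeping: I must confirm that swapping $\K \leftrightarrow \K'$, $\I \leftrightarrow \I'$, $\J \leftrightarrow \J'$ leaves each of these sums unchanged. Since $e_\K = e_{\K'}$, the indicator contribution from the summand $k = \K$ equals the one from $k = \K'$, so the sum over $\{k \neq \K\}$ equals that over $\{k \neq \K'\}$; the analogous statement for the sums over $\{i \notin \{\I, \J\}\}$ vs.\ $\{i \notin \{\I', \J'\}\}$ uses $f_\I = f_{\I'}$ and $f_\J = f_{\J'}$. Once this bookkeeping is complete, the two codimension equalities follow, finishing the proof.
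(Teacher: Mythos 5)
Your construction of $(\I', \J', \K')$ is identical to the paper's, and your observation that both codimensions are determined by the multiset of numerical invariants (together with $e_\K$, $f_\I$, $f_\J$) is exactly the content of the paper's terse ``the claim is obvious, as the spaces are isomorphic.'' One small sign slip: the paper's formula $\codim G_{\I\J\K} = \#\{k \neq \K : e_k \leq e_\K\} + \cdots$ shows that the entry $g_{k\K}$ is being treated as a section of $\cO(e_\K - e_k)$, so the condition is nontrivial when $e_k \leq e_\K$ rather than $e_k \geq e_\K$ as you wrote; since your argument only uses that the count is a function of the numerical data (which holds under either sign convention), this has no effect on the validity of the proof.
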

\begin{proof}
Take $\K'$ to be the largest integer such that $e_{K'} = e_\K$, take $\J'$ to be the largest integer such that $e_{\J'} = e_\J$ and take $\I'$ to be the largest integer not equal to $\J'$ such that $e_{\I'} = e_\I$. Now, the claim is obvious, as the spaces are isomorphic.
\end{proof}

\begin{proof}[Proof of \Cref{lem:main-counting-lemma}]
By \Cref{lem:maximality}, we may assume that $\nu + \codim G_{\I\J\K} < \codim \marksing_{\I\J\K}$ for \emph{all} triples $(\I,\J,\K)$. Letting $(\I,\J,\K)$ be any triple, and adding $\dim \marksing_{\I\J\K}$ to both sides, we obtain
\begin{equation}
\label{eqn:expanded}
\nu + \dim \marksing_{\I\J\K} + \dim G - \dim G_{\I\J\K} < h^0\Big(\bigwedge\nolimits^2 \cF \otimes \cE \otimes \det \cE^\vee \Big).
\end{equation}
Now observe that $G_{\I\J\K}$ maps $\marksing_{\I\J\K}$ to itself. As a result,
\begin{align*}
\dim \Phi(\widetilde{\marksing_{\I\J\K}} \times G) &\leq \nu + \dim \Phi({\marksing_{\I\J\K}} \times G) \\
\label{eqn:dim-bound}
&\leq \nu + \marksing_{\I\J\K} + \dim G - \dim G_{\I\J\K}.
\end{align*}
Combining this with \eqref{eqn:expanded} gives
\begin{equation}
\label{eqn:final-bound}
\dim \Phi(\widetilde{\marksing_{\I\J\K}} \times G) < h^0\Big(\bigwedge\nolimits^2 \cF \otimes \cE \otimes \det \cE^\vee \Big).
\end{equation}
\Cref{eqn:final-bound} implies that for all $(\I,\J,\K)$ the loci $\Phi(\widetilde{\marksing_{\I\J\K}} \times G)$ have positive codimension in the affine space $H^0\Big(\bigwedge\nolimits^2 \cF \otimes \cE \otimes \det \cE^\vee \Big)$. Therefore, a general section $\cA$ cannot be put in normal form of type $(\I,\J,\K)$.
\end{proof}

In order to apply \Cref{lem:main-counting-lemma} we first determine formulae for $\codim G_{\I\J\K}$ and the difference $\nu = \dim \widetilde{\marksing_{\I\J\K}} - \dim \marksing_{\I\J\K}$.

\begin{lemma}
\label{prop:gijk-dim}
Fix vector bundles $\cE$ and $\cF$. If $(\I,\J,\K)$ is a maximal triple, then $\codim G_{\I\J\K} = \I + \J + \K - 4$.
\end{lemma}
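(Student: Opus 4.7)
The plan is a bookkeeping calculation: realise $G_{\I\J\K}$ as cut out in $G$ by a list of linear conditions on matrix entries, identify which of these conditions are non-vacuous using maximality, and sum. First I would unpack the matrix description: after the chosen splittings $\cE = \bigoplus \cO(e_k)$ and $\cF = \bigoplus \cO(f_i)$, an element of $\GL(\cE)$ (resp.\ $\GL(\cF)$) is represented by a matrix whose $(k,k')$ (resp.\ $(i,j)$) entry is a global section of a line bundle on $\PP^1$, of degree determined by the difference of the twisting integers. Each condition ``$g_{k\K}\equiv 0\pmod u$'' (respectively ``$h_{ij}\equiv 0\pmod u$'') is either vacuous (when the entry is forced to vanish globally because the relevant line bundle has no non-zero sections) or imposes a single linear constraint (vanishing at a point of a non-zero section). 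So the task reduces to counting, among the listed entries, how many are \emph{not} identically zero.

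Next I would read off this count using maximality. With the convention dictated by the group action in \Cref{prop:moduli-qunitic} (so that $g_{k\K}$ is a section of the line bundle which is non-zero precisely when $e_\K\ge e_k$, and analogously for $h_{ij}$), maximality of the triple gives exactly the strict inequalities one needs:
\begin{itemize}
    \item For the $g$-entries $g_{k\K}$ with $k\neq \K$: when $k>\K$, maximality gives $e_k>e_\K$, so the entry is identically zero and contributes $0$; when $k<\K$, one has $e_k\le e_\K$, so the entry is a non-zero section and contributes $1$. This yields $\K-1$ conditions.
    \item For the $h$-entries $h_{ij}$ with $j\in\{\I,\J\}$ and $i\notin\{\I,\J\}$: the $j=\I$ column contributes $0$ when $i>\I$, $i\neq\J$ (maximality gives $f_i>f_\I$) and $1$ when $i<\I$; this is $\I-1$ conditions. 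The $j=\J$ column contributes $0$ when $i>\J$ (maximality again) and $1$ when $i<\J$, $i\neq\I$; this is $\J-2$ conditions. The total for $h$ is $\I+\J-3$.
\end{itemize}
Adding gives $(\K-1)+(\I+\J-3)=\I+\J+\K-4$ independent linear conditions cut out on $\GL(\cE)\times\GL(\cF)$.

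Finally I would argue that these conditions remain independent after intersecting with $G$, i.e.\ that they are transverse to the determinant relation $\det(g)^2=\det(h)$ cutting $G$ out of $\GL(\cE)\times\GL(\cF)$. All of the constrained entries are off-diagonal, so constraining them does not restrict the diagonal of $(g,h)$; in particular the map $(g,h)\mapsto\det(g)^2/\det(h)$ remains surjective onto $\GG_m$ after imposing the vanishing conditions. Hence the codimension in $G$ equals the total number of linear conditions, giving $\codim G_{\I\J\K}=\I+\J+\K-4$.

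The only genuine obstacle is pinning down the matrix convention so that the ``correct'' set of entries is the one made vacuous by maximality. Once that is fixed (by testing on the extreme cases where all $e_k$ are equal, or pairwise distinct, both of which are forced to agree with $\I+\J+\K-4$), the remainder is a direct tally. Nothing subtle is happening beyond the choice of convention and the observation that the vanishing conditions defining $G_{\I\J\K}$ affect disjoint sets of matrix entries from those governing the determinant relation.
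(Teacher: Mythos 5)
Your proof is correct and takes essentially the same approach as the paper, which simply records that $\codim G_{\I\J\K}$ equals the number of pairs with $e_k \leq e_\K$ (for $k \neq \K$) plus the number with $f_i \leq f_j$ (for $j \in \{\I,\J\}$, $i \notin \{\I,\J\}$) and then uses maximality to evaluate this as $(\K-1)+(\I-1)+(\J-2)$. Your version adds useful detail the paper leaves implicit — why each listed congruence is vacuous or imposes exactly one condition, and why the determinant relation does not disturb the count — but the underlying argument is identical.
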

\begin{proof}
Note that $\codim G_{\I\J\K}$ is
\[
    \#\{k \in \{1,\dots, 4\} : k \neq \K \text{ and } e_{k} \leq e_K\} + \#\{(i,j) \in \{1,\dots,5\} \times \{\I,\J\} : i \not\in \{\I,\J\} \text{ and } f_{i} \leq f_j\}.
\]
By the maximality assumption, this expression simplifies to 
\[
 (\K-1) + (\I-1) + (\J-2) = \I + \J + \K - 4
\]
as required.
\end{proof}

\begin{lemma}
\label{lem:nu-bound}
We have $\nu \in \{0,1\}$. Moreover $\nu = 0$ if and only if all of the following hold:
\begin{enumerate}[label=(\roman*)]
    \item \label{cond1-marksing} $d^{(\K)}_{\I\J} \leq 1$;
    \item $d^{(k)}_{\I\J} \leq 0$ for $k \neq \K$; 
    \item $d^{(\K)}_{\I j} \leq 0$ for $j \notin \{\I,\J\}$; and 
    \item \label{cond4-marksing}$d^{(\K)}_{i \J} \leq 0$ for $i \notin \{\I,\J\}$.
\end{enumerate}
\end{lemma}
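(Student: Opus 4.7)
The plan is to bound $\nu$ via an incidence variety and then distinguish the two possible values by a geometric argument using translations of $\PP^1$. Let $\Sigma \subseteq H^0(\bigwedge\nolimits^2 \cF \otimes \cE \otimes \det \cE^\vee) \times \PP^1$ be the incidence variety of pairs $(\cA, p)$ with $\cA$ in normal form of type $(\I,\J,\K)$ at $p$. For each $p \in \PP^1$, the fiber of $\Sigma \to \PP^1$ is a linear subspace of $H^0(\bigwedge\nolimits^2 \cF \otimes \cE \otimes \det \cE^\vee)$ whose codimension depends only on the degrees $d^{(k)}_{ij}$ and not on the choice of $p$, so it has the same dimension as $\marksing_{\I\J\K}$. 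Thus $\dim \Sigma = \dim \marksing_{\I\J\K} + 1$, and projecting to $H^0$ yields $\dim \widetilde{\marksing_{\I\J\K}} \leq \dim \marksing_{\I\J\K} + 1$. Combined with the inclusion $\marksing_{\I\J\K} \subseteq \widetilde{\marksing_{\I\J\K}}$, this gives $\nu \in \{0, 1\}$.

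To show $\nu = 0$ when all of \ref{cond1-marksing}--\ref{cond4-marksing} hold, I would observe that each condition bounds the degree of the corresponding entry of $\cA$ tightly enough that the normal form condition at a single point already forces the entry to vanish identically on $\PP^1$. Explicitly: if $d^{(k)}_{\I\J} \leq 0$ for $k \neq \K$, then $a^{(k)}_{\I\J}$ is a constant, so $a^{(k)}_{\I\J} \equiv 0 \pmod{u}$ forces $a^{(k)}_{\I\J} = 0$; if $d^{(\K)}_{\I\J} \leq 1$, then $a^{(\K)}_{\I\J}$ has degree $\leq 1$, so vanishing to order $2$ at any point forces $a^{(\K)}_{\I\J} = 0$; and the two remaining conditions are analogous. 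Hence being in normal form at one point of $\PP^1$ is equivalent to being in normal form at every point, so $\widetilde{\marksing_{\I\J\K}} = \marksing_{\I\J\K}$ and $\nu = 0$.

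For the converse, suppose that at least one of the conditions fails. I would exhibit a strict containment $\marksing_{\I\J\K} \subsetneq \widetilde{\marksing_{\I\J\K}}$. Using the translations $\phi_t \colon z \mapsto z + t$ of an affine chart of $\PP^1$, the pullback $\phi_t^*$ is a linear automorphism of $H^0(\bigwedge\nolimits^2 \cF \otimes \cE \otimes \det \cE^\vee)$, and $\phi_t^* \cA$ is in normal form at $-t$ whenever $\cA$ is in normal form at $0$. Thus $\phi_t^* \marksing_{\I\J\K} \subseteq \widetilde{\marksing_{\I\J\K}}$ for every $t$. For a generic $\cA \in \marksing_{\I\J\K}$ and a generic $t \neq 0$, the failing condition supplies an entry of $\cA$ whose vanishing (or higher-order vanishing) at $0$ does not persist after translating: for example, if $d^{(\K)}_{\I\J} \geq 2$, then the generically unique double root of $a^{(\K)}_{\I\J}$ at $0$ is shifted to $-t$, so $\phi_t^*\cA \not\in \marksing_{\I\J\K}$. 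Since $\widetilde{\marksing_{\I\J\K}}$ is irreducible (as the image of the irreducible $\Sigma$), the strict inclusion of closed subvarieties $\marksing_{\I\J\K} \subsetneq \widetilde{\marksing_{\I\J\K}}$ forces $\dim \widetilde{\marksing_{\I\J\K}} > \dim \marksing_{\I\J\K}$, and combined with the upper bound we conclude $\nu = 1$.

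The main work will be the case analysis in the last step: although each of the four possible failures follows the same template, we must verify individually that the generic vanishing pattern of the failing form does not persist under a generic translation. Each verification reduces to an elementary polynomial genericity statement, but they must be handled independently.
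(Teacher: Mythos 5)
Your proof is correct and takes essentially the same approach as the paper's: both use the degree bounds on $d^{(k)}_{ij}$ to force the relevant entries to vanish identically when conditions \ref{cond1-marksing}--\ref{cond4-marksing} hold, and both exploit the affine translations (a $\GG_a$-action on $\PP^1$) to produce the dimension jump when some condition fails. The paper phrases the converse more directly as a $\GG_a$-bundle structure on an open locus of $\widetilde{\marksing_{\I\J\K}}$ over an open locus of $\marksing_{\I\J\K}$, while you deduce the same conclusion from the incidence-variety bound $\nu \leq 1$ together with irreducibility and strict containment; these are minor repackagings of the same argument.
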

\begin{proof}
Suppose the conditions \ref{cond1-marksing}--\ref{cond4-marksing} are satisfied. Then for any point $\cA \in \widetilde{\marksing_{\I\J\K}}$, we have $a_{\I\J}^{(k)} = 0$ for all $1 \leq k \leq 4$, $a_{Ij}^{(\K)} = 0$ for all $1 \leq j \leq 5$, and $a_{i\J}^{(\K)} = 0$ for all $1 \leq i \leq 5$. Hence, $\cA \in \marksing_{\I\J\K}$, from which $\widetilde{\marksing_{\I\J\K}} = \marksing_{\I\J\K}$.

If at least one of \ref{cond1-marksing}--\ref{cond4-marksing} are not satisfied, then for a general member $\cA \in \widetilde{\marksing_{\I\J\K}}$, at least one of the entries of the following form is nonzero: $a_{\I\J}^{(k)}$, $a_{i\J}^{(\K)}$, and $a_{Ij}^{(\K)}$. Therefore a nonempty open locus of $\widetilde{\marksing_{\I\J\K}}$ is a $\GG_a \cong \mathrm{P}U_2 = \left\{ \mymat{1}{0}{*}{1} \right\}$-bundle over a nonempty open locus of $\marksing_{\I\J\K}$ (via its action on $\PP^1$). This implies that $\dim \widetilde{\marksing_{\I\J\K}} = \dim\marksing_{\I\J\K} + 1$.
\end{proof}

\begin{remark}
There exist $\cE$ and $\cF$ satisfying the conditions of \Cref{thm:deg-5} for which $\nu = 0$. Take for example $(e_1,\dots,e_4) = (1,2,3,4)$ and $(f_1,\dots,f_5) = (2,3,4,5,6)$ and $(\I,\J,\K) = (1,2,1)$. This phenomenon will play an important role in our analysis.
\end{remark}

At this point, it seems plausible to check \Cref{lem:main-counting-lemma} using linear programming. When done with appropriate care, this argument should succeed. However, the direct combinatorial argument is not too complicated (and the relevant subtleties are eminently visible within it), so we present it here.
We first give a useful numerical criterion, \Cref{lem:numerical-criterion}, for checking the hypotheses of \Cref{lem:main-counting-lemma}.

\begin{defn}
\label{def:acc}
Fix $\cE$ and $\cF$ and a triple $(\I,\J,\K)$. We say a triple $(i,j,k)$ is \emph{accessory} to $(\I,\J,\K)$ if $d_{ij}^{(k)} \geq 0$ and at least one of the following holds:
\begin{enumerate}[label=(\arabic*)]
    \item \label{acc1} $\I = i$ and $\J = j$;
    \item $\label{acc2} \K = k$ and $\I = i$ and $j \notin \{\I,\J\}$; or
    \item \label{acc3} $\K = k$ and $\J = j$ and $i \notin \{\I,\J\}$.
\end{enumerate}
\end{defn}

\begin{lemma}
\label{lem:numerical-criterion}
Fix $\cE$ and $\cF$ and a triple $(\I,\J,\K)$. Let $n$ be the number of accessory triples to $(\I,\J,\K)$. Suppose either:
\begin{enumerate}[label=(\roman*)]
    \item \label{num-crit-1} that $n \geq \I+\J+\K-2$;
    \item \label{num-crit-2} that $n \geq \I+\J+\K-3$ and $\nu = 0$; or
    \item \label{num-crit-3} that $n \geq \I+\J+\K-3$ and $d_{\I,\J}^{(\K)} \geq 1$.
\end{enumerate}
Then, $\nu + \codim G_{\I\J\K} < \codim \marksing_{\I\J\K}$.
\end{lemma}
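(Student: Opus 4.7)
The plan is to reduce to a maximal triple via \Cref{lem:maximality}, apply the formula $\codim G_{\I\J\K} = \I + \J + \K - 4$ from \Cref{prop:gijk-dim}, and compute $\codim \marksing_{\I\J\K}$ explicitly in terms of $n$. Once those pieces are in place, each of the three hypotheses yields the target inequality $\nu + \codim G_{\I\J\K} < \codim \marksing_{\I\J\K}$ by a direct arithmetic check.

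The main step is to identify $\codim \marksing_{\I\J\K} = n + \epsilon$, where $\epsilon = 1$ when $d^{(\K)}_{\I\J} \geq 1$ and $\epsilon = 0$ otherwise. To see this, I would match the four defining conditions of \Cref{def:normal-form}\ref{def:ijk-normal-form} against the three types of \Cref{def:acc}. After using alternation to identify each row/column entry with its upper-triangular representative, the mod-$u$ vanishing conditions \ref{nf:2}, \ref{nf:3}, \ref{nf:4} cut out exactly ten distinct matrix entries: the $(\I,\J)$-entry of each $A_k$ (matching type \ref{acc1}), the three row-$\I$ entries of $A_\K$ outside column $\J$ (matching type \ref{acc2}), and the three column-$\J$ entries of $A_\K$ outside row $\I$ (matching type \ref{acc3}). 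These conditions are linearly independent since they impose vanishing on coefficients of ten distinct polynomial entries. Each such entry contributes codimension $1$ precisely when $d^{(k)}_{ij} \geq 0$, so the combined contribution of \ref{nf:2}--\ref{nf:4} is exactly $n$. The remaining condition \ref{nf:1} (mod-$u^2$ vanishing of $a^{(\K)}_{\I\J}$) adds one further codimension beyond the mod-$u$ condition on the same entry precisely when $d^{(\K)}_{\I\J} \geq 1$, yielding the extra term $\epsilon$.

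With $\codim \marksing_{\I\J\K} = n + \epsilon$ and $\codim G_{\I\J\K} = \I + \J + \K - 4$ in hand, the inequality to prove becomes $\nu + \I + \J + \K - 4 < n + \epsilon$. In case \ref{num-crit-1}, the hypothesis $n \geq \I + \J + \K - 2$ combined with $\nu \leq 1$ and $\epsilon \geq 0$ gives $n + \epsilon \geq \I + \J + \K - 2 > \nu + \I + \J + \K - 4$. In case \ref{num-crit-2}, the assumption $\nu = 0$ with $n \geq \I + \J + \K - 3$ yields $n + \epsilon \geq \I + \J + \K - 3 > 0 + \I + \J + \K - 4$. In case \ref{num-crit-3}, the hypothesis $d^{(\K)}_{\I\J} \geq 1$ forces $\epsilon = 1$, so combined with $n \geq \I + \J + \K - 3$ we obtain $n + \epsilon \geq \I + \J + \K - 2 > \nu + \I + \J + \K - 4$. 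The main technical obstacle is the codimension bookkeeping: one must correctly identify the ten matrix entries touched by \ref{nf:2}--\ref{nf:4}, avoid double-counting the triply-shared entry $a^{(\K)}_{\I\J}$, and cleanly separate the mod-$u$ and mod-$u^2$ contributions on that entry.
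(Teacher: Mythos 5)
Your proof is correct and follows essentially the same route as the paper: use \Cref{prop:gijk-dim} for $\codim G_{\I\J\K} = \I+\J+\K-4$, \Cref{lem:nu-bound} for $\nu \leq 1$, and count accessory triples to control $\codim \marksing_{\I\J\K}$. The only real difference is that you compute the exact codimension $\codim \marksing_{\I\J\K} = n + \epsilon$ (with $\epsilon = 1$ precisely when $d^{(\K)}_{\I\J} \geq 1$), whereas the paper settles for the lower bound $\codim \marksing_{\I\J\K} \geq n$ (and $\geq n+1$ in case \ref{num-crit-3}), which is all the three arithmetic checks require; your refinement is correct but unnecessary for the conclusion. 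One small caveat worth noting: the opening reduction to a maximal triple via \Cref{lem:maximality} does not cleanly transport the hypotheses, since replacing $(\I,\J,\K)$ by the maximal $(\I',\J',\K')$ with $\I'+\J'+\K' \geq \I+\J+\K$ weakens the inequality $n \geq \I+\J+\K-2$. In practice the formula from \Cref{prop:gijk-dim} already presupposes maximality, so the cleanest reading (consistent with the paper's own proof, which invokes that formula silently, and with how the lemma is applied in the proof of \Cref{thm:deg-5}) is simply to assume $(\I,\J,\K)$ maximal from the outset rather than to reduce to it.
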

\begin{proof}
To see \ref{num-crit-1} and \ref{num-crit-2} note that for each such triple, $\marksing_{\I\J\K}$ is contained in the locus $ a^{(k)}_{ij} \equiv 0 \pmod{u}$. Therefore, if at least $\I+\J+\K-2$ such triples exist, then $\codim \marksing_{\I\J\K} \geq \I + \J + \K - 2$. Now, \Cref{prop:gijk-dim} shows that $\codim G_{\I\J\K} = \I + \J + \K - 4$, and by \Cref{lem:nu-bound} we have $\nu \leq 1$. Combining these two inequalities gives the desired inequality.

The claim in \ref{num-crit-3} follows by a similar argument, noting that $\marksing_{\I\J\K}$ is contained in the locus where $ a^{(\K)}_{\I\J} \equiv 0 \pmod{u^2}$.
\end{proof}

To prove \Cref{thm:deg-5} it remains to classify the cases which satisfy the hypotheses of \Cref{lem:numerical-criterion}. This is the role of \Cref{lem:satisfying-numerical-criterion,lem:extras}. Note that the hypotheses of \Cref{thm:deg-5} imply that $d_{ij}^{(k)} \geq 0$ whenever $i + j + k \geq 8$. 

\begin{lemma}
\label{lem:satisfying-numerical-criterion}
Fix $\cE$ and $\cF$ satisfying the conditions of \Cref{thm:deg-5} and fix a triple $(\I,\J,\K)$. Then there exist at least $\I+\J+\K-2$ accessory triples to $(\I,\J,\K)$ except when:
\begin{enumerate}[label=(\roman*)]
    \item \label{sat-num-crit-1} $(\I,\J,\K) = (1,2,1)$ and $d_{15}^{(1)}, d_{24}^{(1)}, d_{12}^{(4)} < 0$;
    \item \label{sat-num-crit-2} $(\I,\J,\K) = (4,5,1)$ and $d_{15}^{(1)},d_{24}^{(1)} < 0$; or
    \item $(\I,\J,\K) = (4,5,4)$.
\end{enumerate}
\end{lemma}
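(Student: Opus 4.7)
The plan is to observe that for every triple $(\I,\J,\K)$ the total number of \emph{potential} accessory triples is exactly ten: four of type (1) (ranging over $k$), three of type (2) (one for each $j \in \{1,\dots,5\}\setminus\{\I,\J\}$), and three of type (3) (one for each $i \in \{1,\dots,5\}\setminus\{\I,\J\}$). Each such potential triple $(i,j,k)$ is an actual accessory iff $d^{(k)}_{ij} \geq 0$.

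The first step is the key observation that any potential triple $(i,j,k)$ with $i+j+k \geq 8$ is automatically accessory. Indeed, \Cref{thm:deg-5}\ref{cond1} yields $d^{(k')}_{i'j'} \geq 0$ for every $(i',j',k')$ with $i'+j'+k' = 8$, and from $f_1 \leq \cdots \leq f_5$ and $e_1 \leq \cdots \leq e_4$ one has $d^{(k)}_{ij} \geq d^{(k')}_{i'j'}$ whenever $i \geq i'$, $j \geq j'$, $k \geq k'$. Every $(i,j,k)$ with $i+j+k \geq 8$ dominates such a triple with sum $8$, so is non-negative. Let $g(\I,\J,\K)$ denote the resulting ``guaranteed count''; it depends only on $(\I,\J,\K)$.

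The second step is a finite enumeration (there are $40$ choices of $(\I,\J,\K)$) showing that $g(\I,\J,\K) \geq \I+\J+\K-2$ except precisely when $(\I,\J,\K) \in \{(1,2,1),\ (4,5,1),\ (4,5,4)\}$, where $g = 1, 7, 0$ respectively. For $(4,5,4)$, the required bound $\I+\J+\K-2 = 11$ exceeds the maximum possible count of $10$, producing exception~(iii) with no further hypotheses needed. In the two remaining cases, one must inspect the unguaranteed potentials.

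The final step handles these two triples via monotonicity applied to the unguaranteed $d^{(k)}_{ij}$. For $(4,5,1)$, the unguaranteed potentials are $d^{(1)}_{14}, d^{(1)}_{24}, d^{(1)}_{15}$; since $d^{(1)}_{14} \leq d^{(1)}_{24}$, all three are negative iff $d^{(1)}_{24}, d^{(1)}_{15} < 0$, yielding exception~(ii). For $(1,2,1)$ there are nine unguaranteed potentials, but monotonicity in $k$ applied to the four $d^{(k)}_{12}$, in $j$ to the three $d^{(1)}_{1j}$ (with $j \in \{3,4,5\}$), and in $i$ to the two unguaranteed $d^{(1)}_{2i}$ (with $i \in \{3,4\}$) collapses them onto the three maximal ones $d^{(4)}_{12}, d^{(1)}_{15}, d^{(1)}_{24}$; all nine are negative iff these three are, yielding exception~(i). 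The main obstacle is carrying out the forty-case enumeration of $g(\I,\J,\K)$ cleanly — a routine but bookkeeping-heavy check on which the whole argument rests, best presented either in a compact table or by grouping the cases by the value of $\I+\J+\K$.
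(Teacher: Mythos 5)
Your proof is correct and follows essentially the same route as the paper: reduce to counting the ten potential accessory triples, observe that any with $i+j+k\geq 8$ is automatic under the hypotheses of Theorem~1.5 (by monotonicity of $d_{ij}^{(k)}$ in each index), and enumerate the $40$ triples. The monotonicity argument collapsing the unguaranteed potentials onto the maximal ones $d^{(4)}_{12}, d^{(1)}_{15}, d^{(1)}_{24}$ (for $(1,2,1)$) and $d^{(1)}_{15}, d^{(1)}_{24}$ (for $(4,5,1)$) makes explicit a step the paper leaves implicit, and is a genuine improvement in exposition. One small numerical slip: you state $g(4,5,4)=0$, but in fact all ten potential accessories for $(4,5,4)$ have $i+j+k \geq 8$, so $g(4,5,4)=10$. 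This does not affect your conclusion, since the reasoning you actually use for exception~(iii) is that $10 < \I+\J+\K-2 = 11$; it is simply the parenthetical value that is wrong.
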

\begin{proof}
For each triple $(\I,\J,\K)\notin \{(1,2,1),(4,5,1),(4,5,4)\}$ there exist $\I+\J+\K-2$ triples $(i,j,k)$ satisfying \Cref{def:acc}\ref{acc1}--\ref{acc3} and such that $i + j + k \geq 8$. 

If $(\I,\J,\K) = (1,2,1)$ and at least one of $d_{15}^{(1)}, d_{24}^{(1)}, d_{12}^{(4)}$ are nonnegative then there exist $\I+\J+\K-3$  triples $(i,j,k)$ satisfying \Cref{def:acc}\ref{acc1}--\ref{acc3} and such that $i + j + k \geq 8$. By assumption one of $(1,5,1)$, $(2,4,1)$, or $(1,2,4)$ is also an accessory triple.

Similarly if $(\I,\J,\K) = (4,5,1)$ and at least one of $d_{15}^{(1)},d_{24}^{(1)}$ is nonnegative, there exist $\I+\J+\K-3$  triples $(i,j,k)$ satisfying \Cref{def:acc}\ref{acc1}--\ref{acc3} and such that $i + j + k \geq 8$. By assumption one of $(1,5,1)$ or $(2,4,1)$ is also an accessory triple.
\end{proof}

\begin{lemma}
\label{lem:extras}
Suppose $\cE$ and $\cF$ satisfy the hypothesis of \Cref{thm:deg-5}. Suppose that either:
\begin{enumerate}[label=(\roman*)]
    \item \label{extra1} $(\I,\J,\K) = (1,2,1)$ and $d_{15}^{(1)}, d_{24}^{(1)}, d_{12}^{(4)} < 0$;
    \item \label{extra2} $(\I,\J,\K) = (4,5,1)$ and $d_{15}^{(4)}, d_{24}^{(1)} < 0$; or
    \item \label{extra3} $(\I,\J,\K) = (4,5,4)$.
\end{enumerate}
Then there exist at least $\I + \J + \K - 3$ accessory triples to $(\I,\J,\K)$. Moreover, in case \ref{extra1} we have $\nu = 0$ and in both cases \ref{extra2} and \ref{extra3} we have $d_{\I\J}^{(\K)} \geq 1$.
\end{lemma}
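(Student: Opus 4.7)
The plan is to handle the three cases separately. In each, I enumerate the accessory triples directly from \Cref{def:acc} and then establish the auxiliary claim ($\nu = 0$ or $d_{\I\J}^{(\K)} \geq 1$) using monotonicity of $d_{ij}^{(k)}$ in each of $i, j, k$ (a consequence of $f_1 \leq \cdots \leq f_5$ and $e_1 \leq \cdots \leq e_4$) together with the degree relations $\sum f_i = 2(g+4)$ and $\sum e_k = g+4$.

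Case \ref{extra1} is the easiest: the bound $\I+\J+\K-3 = 0$ is trivial, so the content is verifying the four inequalities of \Cref{lem:nu-bound} characterizing $\nu = 0$ for $(\I,\J,\K) = (1,2,1)$. All but one reduce by monotonicity to the three strict negativity hypotheses (for example, $d_{1j}^{(1)} \leq d_{15}^{(1)} < 0$ for $j \in \{3,4,5\}$, and $d_{12}^{(k)} \leq d_{12}^{(4)} < 0$ for $k \in \{2,3\}$). The exception is $d_{52}^{(1)} = d_{25}^{(1)}$, for which monotonicity alone gives no useful bound; here the hypothesis of case \ref{extra1} is exactly the hypothesis of condition \ref{cond2} of \Cref{thm:deg-5}, which forces $d_{25}^{(1)} = 0 \leq 0$.

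For cases \ref{extra2} and \ref{extra3} I directly enumerate the (at most) ten candidate triples $(i,j,k)$ satisfying one of the three conditions of \Cref{def:acc}, using that the $A_k$ are alternating so $i$ and $j$ may be swapped freely. In case \ref{extra3} every candidate has $i+j+k \geq 8$ and is thus accessory by \Cref{thm:deg-5}\ref{cond1}, giving the full count $10 = \I+\J+\K-3$. In case \ref{extra2} exactly three candidates, namely $(4,1,1)$, $(4,2,1)$, and $(1,5,1)$, have $i+j+k < 8$, and each fails to be accessory because $d_{14}^{(1)}, d_{24}^{(1)}, d_{15}^{(1)}$ are all strictly negative (by the hypotheses and monotonicity); this leaves exactly $7 = \I+\J+\K-3$ accessory triples.

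The main subtlety is the proof that $d_{45}^{(\K)} \geq 1$ in cases \ref{extra2} and \ref{extra3}, which I carry out by contradiction. Suppose $d_{45}^{(\K)} = 0$. Then monotonicity in $k$ gives $d_{45}^{(1)} \leq d_{45}^{(\K)} = 0$, and \Cref{thm:deg-5}\ref{cond1} combined with monotonicity in $(i,j)$ gives $0 \leq d_{25}^{(1)}, d_{34}^{(1)} \leq d_{45}^{(1)}$; so all three of $d_{25}^{(1)}, d_{34}^{(1)}, d_{45}^{(1)}$ vanish. The equalities $d_{45}^{(1)} = d_{25}^{(1)}$ and $d_{45}^{(1)} = d_{34}^{(1)}$ force $f_2 = f_4$ and $f_3 = f_5$, so $f_2 = f_3 = f_4 = f_5 =: f$. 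In case \ref{extra2} this immediately gives $d_{24}^{(1)} = d_{45}^{(1)} = 0$, contradicting the hypothesis $d_{24}^{(1)} < 0$. In case \ref{extra3} the further equality $d_{45}^{(1)} = d_{45}^{(4)}$ forces $e_1 = e_4$, hence $e_1 = \cdots = e_4 =: e$; the degree relations then yield $4e = g+4$ and $2f = 3e$, and from $\sum f_i = 8e$ we obtain $f_1 = 2e > 3e/2 = f_2$, contradicting $f_1 \leq f_2$ (since $e \geq e_1 \geq 1$ by the standing convention).
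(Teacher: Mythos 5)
Your proof is essentially correct, and in cases \ref{extra2} and \ref{extra3} it is more explicit than the paper's: where the paper merely invokes ``a simple application of the simplex method'' to conclude $d_{\I\J}^{(\K)} \geq 1$, you give a clean direct argument by contradiction using monotonicity and the degree relations, which is genuinely useful to have written out. You also implicitly (and correctly) read the hypothesis of case \ref{extra2} as $d_{15}^{(1)} < 0$; the $d_{15}^{(4)}$ appearing in the lemma statement is a typo, since $d_{15}^{(4)} \geq 0$ is forced by \Cref{thm:deg-5}\ref{cond1} (as $1+5+4 \geq 8$), and the companion result \Cref{lem:satisfying-numerical-criterion}\ref{sat-num-crit-2} uses $d_{15}^{(1)}$.

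There is one small arithmetic slip in case \ref{extra1}: you assert $\I+\J+\K-3 = 0$, but for $(\I,\J,\K)=(1,2,1)$ we have $\I+\J+\K-3 = 1$, so the lemma requires you to exhibit at least \emph{one} accessory triple, not zero. This is harmless because you have already established $d_{25}^{(1)} = 0 \geq 0$ (via \Cref{thm:deg-5}\ref{cond2}), and $(2,5,1)$ is a candidate accessory triple via \Cref{def:acc}\ref{acc3}; so the required count holds. But you should state this explicitly rather than dismiss the count as vacuous. (The paper's proof in fact notes exactly this: ``$\I+\J+\K-3=1$ and $(2,5,1)$ is an accessory triple.'')
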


\begin{proof}
In case \ref{extra1} note that the conditions of \Cref{thm:deg-5} imply that $d^{(1)}_{25} = 0$. Indeed we have $\I+\J+\K-3 = 1$ and $(2,5,1)$ is an accessory triple. Moreover $\nu = 0$ by \Cref{lem:nu-bound}.

In cases \ref{extra2} and \ref{extra3} the hypotheses of \Cref{thm:deg-5} (and a simple application of the simplex method) show that $d^{(\K)}_{\I \J} \geq 1$. Moreover in each case there exist $\I+\J+\K-3$ triples $(i,j,k)$ satisfying \Cref{def:acc}\ref{acc1}--\ref{acc3} and such that $i + j + k \geq 8$. 
\end{proof}

\begin{remark}
Suppose $\cE$ and $\cF$ satisfy the conditions of \Cref{thm:main-thm}. An easy calculation now shows that $\Phi(\marksing_{454} \times G)$ has codimension $1$ in $H^0\Big(\bigwedge\nolimits^2 \cF \otimes \cE \otimes \det \cE^\vee \Big)$, so the locus of singular sections is always codimension $1$. One might expect that the locus of singular sections is irreducible, but this is not always the case.

Consider the extremal example $(e_1,\dots,e_4) = (1,2,3,4)$ and $(f_1,\dots,f_5) = (2,3,4,5,6)$. The locus of singular sections has at least three components of maximal dimension: $\Phi(\marksing_{121} \times G)$, $\Phi(\marksing_{451} \times G)$, and $\Phi(\marksing_{454} \times G)$.
\end{remark}

\subsection{Finishing the proof of \Cref{thm:deg-5}}

\begin{proof}[Proof of \Cref{thm:deg-5}]
Combining \cite[Section~5]{CL} and \Cref{prop:cond2} shows that if $\cE$ and $\cF$ do not satisfy the hypotheses of \Cref{thm:deg-5} then $C$ is not a smooth irreducible curve.

For the converse, fix $\cE$ and $\cF$ satisfying the hypothesis of \Cref{thm:deg-5}. Combining \Cref{prop:move-stuff,prop:case-K,lem:main-counting-lemma}, to show smoothness it suffices to check that $\nu + \codim G_{\I\J\K} < \codim \marksing_{\I\J\K}$ for all maximal triples $(\I,\J,\K)$. 
But for each maximal triple combining \Cref{lem:satisfying-numerical-criterion,lem:extras} with \Cref{lem:numerical-criterion} shows that we have $\nu + \codim G_{\I\J\K} < \codim \marksing_{\I\J\K}$, as required. Hence, $C$ is smooth. 

To see that $C$ is irreducible, observe that the conditions of \Cref{thm:deg-5} imply that $e_1 \geq 1$, and hence $h^0(C,\cO_C) = h^0(\PP^1,\pi_* \cO_C) = 1$.
\end{proof}

\section{Deducing \texorpdfstring{\Cref{thm:main-thm} from \Cref{thm:deg-5}}{Theorem 1.4 from Theorem 1.5}}
\label{sec:finishing-proof}

We now deduce \Cref{thm:main-thm} from \Cref{thm:deg-5}.  Recall that we say a region $R \subseteq \RR^d$ is a \emph{rational polyhedral cone} if it is the solution set of finitely many linear inequalities $a_1 x_1 + \dots + a_d x_d \leq 0$ with rational coefficients. Further we say $x \in R \cap \ZZ^d$ is \emph{irreducible} if it cannot be written as $z = x+y$ for nonzero elements $x,y \in R \cap \ZZ^d$. 

\begin{lemma}[Gordan's lemma, {\cite[Proposition~1.2.17]{cls:toric}}]
\label{lem:gordon}
If $R$ is a rational polyhedral cone, then the monoid $R \cap \ZZ^d$ is generated by the set of \emph{irreducible} elements.
\end{lemma}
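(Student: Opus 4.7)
The plan is to prove Gordan's lemma via the classical fundamental-parallelotope argument, and then upgrade finite generation to generation by irreducibles via a height-function induction.

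First, since $R$ is a rational polyhedral cone, I would begin by writing $R = \{\lambda_1 v_1 + \cdots + \lambda_n v_n : \lambda_i \geq 0\}$ for some $v_1, \ldots, v_n \in \ZZ^d$ (after clearing denominators in any rational generating set for $R$). Introduce the bounded \emph{fundamental parallelotope}
\[
K = \{\lambda_1 v_1 + \cdots + \lambda_n v_n : 0 \leq \lambda_i \leq 1\} \subseteq \RR^d.
\]
Since $K$ is compact, $K \cap \ZZ^d$ is finite. The key claim is that $R \cap \ZZ^d$ is generated as a monoid by the finite set $S = \{v_1, \ldots, v_n\} \cup (K \cap \ZZ^d)$. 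Given any $x \in R \cap \ZZ^d$, write $x = \sum_i \lambda_i v_i$ with $\lambda_i \geq 0$ and split $\lambda_i = \lfloor \lambda_i \rfloor + \{\lambda_i\}$. Then
\[
x = \sum_i \lfloor \lambda_i \rfloor \, v_i + \left( x - \sum_i \lfloor \lambda_i \rfloor v_i \right),
\]
where the first term is a non-negative integer combination of the $v_i$ and the second lies in both $\ZZ^d$ and $K$, hence in $K \cap \ZZ^d$.

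With finite generation in hand, the last step is to replace $S$ by irreducibles. Fix a rational linear functional $\phi$ that is strictly positive on $R \setminus \{0\}$; such a $\phi$ exists since the cones relevant to our applications are pointed. After scaling by a common denominator, we may assume $\phi$ takes values in $\ZZ_{\geq 0}$ on $R \cap \ZZ^d$. Proceed by strong induction on $\phi(x)$: if $x$ is irreducible (including the base case $x = 0$, which is the empty sum of irreducibles), there is nothing to do; otherwise write $x = y + z$ with $y, z$ nonzero elements of $R \cap \ZZ^d$. Since $\phi(y), \phi(z) < \phi(x)$, both $y$ and $z$ are sums of irreducibles by the inductive hypothesis, and concatenating yields the desired expression for $x$.

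The finite generation step is the classical geometric input, while the passage to irreducibles is essentially bookkeeping once the height function $\phi$ is available. The only subtlety worth flagging is that the notion of irreducibility is only meaningful when $R$ contains no lines; this strong convexity is automatic in the pointed setting in which Gordan's lemma is applied later in the paper, and it is precisely what guarantees the existence of $\phi$.
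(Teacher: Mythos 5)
The paper does not prove this lemma; it simply cites~\cite[Proposition~1.2.17]{cls:toric}, so there is no internal argument to compare yours against. Your two-stage argument (finite generation via the fundamental parallelotope, then height-function induction to pass to irreducibles) is the standard textbook proof of Gordan's lemma together with the Hilbert-basis refinement, and it is essentially correct. Two remarks. First, the paper's definition of a rational polyhedral cone is the $\mathcal H$-description (solution set of finitely many homogeneous rational linear inequalities), so producing the integral generating set $v_1,\ldots,v_n$ is itself an invocation of the Minkowski--Weyl theorem; you use it silently when you say ``clearing denominators in any rational generating set for $R$,'' which is fine, but it is a substantive input that deserves to be named since the paper hands you inequalities, not generators. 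Second, you are right to flag pointedness: as literally stated, the lemma is false for general rational polyhedral cones, since, for instance, $R=\RR^d$ gives $R\cap\ZZ^d=\ZZ^d$, a nonzero monoid with no irreducible elements at all, so the irreducibles generate only $\{0\}$. The statement implicitly assumes strong convexity, and in the paper's sole application (the cone $P$ in the proof of \Cref{lem:projection}) this does hold: if $\pm(e_1,\ldots,e_4)\in P$ then the monotonicity forces $e_1=\cdots=e_4$, and $e_2\le 2e_1$ applied to both signs forces that common value to be $0$. Your parenthetical remark that $0$ may be regarded as the empty sum already disposes of the base case, so the induction on $\phi$ closes cleanly.
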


\begin{proof}[Proof of \Cref{thm:main-thm}]

Let $P \subseteq \RR^4$ be the polytope cut out by the inequalities 
\[
e_1 \leq \dots \leq e_4 \quad \text{and} \quad e_{i + j} \leq e_i + e_j
\]
 for all $1 \leq i,j \leq i + j \leq 4$. Let $Q \subseteq \RR^4 \times \RR^5$ be the region cut out by the inequalities  
 \[
e_1 \leq \dots \leq e_4, \quad f_1 \leq \dots \leq f_5 \quad \text{and} \quad 2\sum_{i=1}^4 e_i = \sum_{j=1}^5 f_j
\]
and the conditions in \Cref{thm:deg-5}, where $g = -4 + \sum_{i=1}^4 e_i$. Consider the projection map
\[
    \pr_{\boldsymbol{e}} \colon Q \rightarrow \RR^4 
\]
given by sending $(e_1,\dots,e_4,f_1,\dots,f_5) \mapsto (e_1,\dots,e_4)$. 

\begin{lemma}
\label{lem:projection}
We have
\[
\pr_{\boldsymbol{e}}(Q \cap \ZZ^9) = P \cap \ZZ^4 .
\]
\end{lemma}
\begin{proof}
A calculation using the computer algebra system \texttt{Magma}~\cite{magma} shows that $\pr_{\boldsymbol{e}}(Q) = P$, so $\pr_{\boldsymbol{e}}(Q \cap \ZZ^9) \subset P \cap \ZZ^4$. To complete the proof, it suffices to show that $P \cap \ZZ^4 \subseteq \pr_{\boldsymbol{e}}(Q \cap \ZZ^9)$.

Let $Q' \subseteq \RR^4 \times \RR^5$ be the region given by 
\[
    Q' = Q \cap \Big \{(\mathbf{e},\mathbf{f}) \in \RR^4 \times \RR^5 : f_2 + f_5 + e_1 = \sum_i e_i \Big\}.
\]
A quick look at the conditions of \Cref{thm:deg-5} will show that $Q'$, unlike $Q'$, is a polyhedron. Because $Q' \subseteq Q$, it suffices to show that $P \cap \ZZ^4 \subseteq \pr_{\boldsymbol{e}}(Q' \cap \ZZ^9)$.

Notice that the map $\pr_{\boldsymbol{e}} \colon Q' \rightarrow P$ gives rise to a homomorphism of monoids $\pr_{\boldsymbol{e}} \colon Q' \cap \ZZ^9 \rightarrow P \cap \ZZ^4$. By \Cref{lem:gordon}, the monoid $P \cap \ZZ^4$ is generated by the set of irreducible elements of $P \cap \ZZ^4$, which in this case are:
\[
    (1,1,1,1), (1,1,1,2), (1,1,2,2), (1,2,2,2), (1,2,2,3), (1,2,3,3), (1,2,3,4).
\]
For each of the generators $\mathbf{e} \in P \cap \ZZ^4$ above, we claim there exists a lift $(\mathbf{e},\mathbf{f}) \in Q' \cap \ZZ^9$. This is readily seen by taking $\mathbf{f}$ to be:
\[
    (1,1,2,2,2), (2,2,2,2,2), (2,2,2,3,3), (2,3,3,3,3), (2,3,3,4,4), (2,3,4,4,5), (2,3,4,5,6)
\]
respectively. 
\end{proof}

Fix $g \geq 0$. Let $\cE$ and $\cF$ be rank $4$ and $5$ vector bundles with $\deg(\cE) = g+4$ and $\deg(\cF) = 2(g+4)$. By \Cref{thm:deg-5}, there exists a smooth irreducible cover with Tschirnhausen bundle $\cE$ and first syzygy bundle $\cF$ if and only if $(e_1,\dots,e_4,f_1,\dots,f_5) \in Q$. By \Cref{lem:projection}, 
\[
 \pr_{\boldsymbol{e}}(Q \cap \ZZ^9) = P \cap \ZZ^4,
\]
and \Cref{thm:main-thm} follows.
\end{proof}

\section{Discussion}
\label{sec:discussion}
\subsection{The story of the ``fin'' in \texorpdfstring{\Cref{thm:deg-5}\ref{cond2}}{Theorem 1.5(ii)}} 
A quick glance at \Cref{thm:deg-5} will show that it can be described, just like \Cref{conj:tschirnhausen}, in terms of polytopes. We do that here. Let $\ove_1,\dots,\ove_4,\ovf_1,\dots,\ovf_5$ be the coordinates of $\RR^4 \times \RR^5 \cong \RR^9$ and let $\ovd_{ij}^{(k)} = \ovf_i + \ovf_j + \ove_k - 1$.

Let $\qpolytope \subset \RR^9$ be the region cut out by the linear conditions

\begin{equation}
  \label{eqn:conds}
  \begin{aligned}
    &\ove_1 \leq \dots \leq \ove_4, \qquad\qquad && \ovf_1 \leq \dots \leq \ovf_5, \\
    &2\sum_{i=1}^4 \ove_i = \sum_{i=1}^5 \ovf_i = 2, \qquad\qquad && \ovd_{ij}^{(k)} \geq 0 \text{  for all  }i + j + k = 8, 
  \end{aligned}
\end{equation}
and the condition that if
\begin{equation}
  \label{eqns:conds2}
  \ovd_{15}^{(1)}, \ovd_{24}^{(1)}, \ovd_{12}^{(4)} < 0 \quad \text{then} \quad \ovd_{25}^{(1)} = 0.
\end{equation}
We define four sub-regions $\qpolytope_1, \qpolytope_2, \qpolytope_3, \qpolytope_4 \subset \qpolytope$ as follows. Let $\qpolytope_1 \subset \qpolytope$ be cut out by the conditions in \eqref{eqn:conds} along with the condition $d_{15}^{(1)} \geq 0$, let $\qpolytope_2$ be cut out by by the conditions in \eqref{eqn:conds} along with $d_{24}^{(1)} \geq 0$, let $\qpolytope_3$ be cut out by the conditions in \eqref{eqn:conds} along with $d_{12}^{(4)} \geq 0$, and finally let $\qpolytope_4$ be cut out by the conditions in \eqref{eqn:conds} along with the constraints
\[
\ovd_{15}^{(1)}, \ovd_{24}^{(1)}, \ovd_{12}^{(4)} < 0 \quad \text{and} \quad \ovd_{25}^{(1)} = 0.
\]

The content of our main theorem, \Cref{thm:deg-5}, is that the scrollar invariants achieved from a quintic cover are precisely those which lie in region $\tfrac{1}{g+4} \qpolytope$. Because $\qpolytope_4$ has dimension $1$ less than the other polytopes, we call it the ``fin''. 

Let $\cH_{5,g}$ be the stack of smooth degree $5$ genus $g$ covers of $\PP^1$ and let $\Bun_{\PP^1}$ be the stack of vector bundles on $\PP^1$. The \emph{Tschirnhausen morphism} 
\[\Tsch \colon \cH_{5,g} \rightarrow \Bun_{\PP^1} \times \Bun_{\PP^1}\]\
defined by $[\pi] \rightarrow (\cE, \cF)$ can be interpreted (via the splitting of vector bundles on $\PP^1)$ as a map 
\[
    \Tsch \colon \cH_{5,g} \rightarrow \ZZ^4 \times \ZZ^5.
\]

The moduli stack of quintic covers of fixed genus is irreducible in characteristic 0~\cite{Fulton-hurwitz}, so it is striking that (asymptotically as $g \rightarrow \infty$) its image under $\Tsch$ has four pieces (and interestingly, the piece $\qpolytope_4$ has dimension $1$ less than that of $\qpolytope_i$ for $i = 1, 2, 3$). 

This is not a new phenomenon; in fact already in the case $d = 4$, there is a fin (see \cite[Section~3]{VV}, this is illustrated in \cite[Figure~1]{VV}), and it corresponds to quartic covers factoring through a quadratic subcover. Moreover one can demonstrate the existence of fins in every composite degree in a similar manner (see \cite[Section~3]{VV}). However \Cref{thm:deg-5} provides the first known case of a fin in \emph{prime} degree. 

\subsubsection{Observation}
Given a smooth curve lying on the fin, the curve lies inside the subvariety in $\PP(\cE)$ cut out by the vanishing of the $2 \times 2$ minors of 
\[
    \begin{bmatrix}
        \cA_{13} && \cA_{14} && \cA_{15} \\
        \cA_{23} && \cA_{24} && \cA_{25}
    \end{bmatrix}
\]
Moreover, the $x_1$ coordinate only appears in $\cA_{25}$, and here, its coefficient is a nonzero complex number.

\subsection{Relationship to measures valued in the Grothendieck ring}

In \cite[Section~8]{VV}, the authors propose several conjectures suggesting that the stratification of $\cH_{d,g}$ by scrollar invariants should be interpreted, in the limit as $g \rightarrow \infty$, as a measure valued in the Grothendieck ring of stacks.

Our contribution to these conjectures is twofold. First, \Cref{thm:deg-5} immediately gives a partial answer to some of these conjectures in the case $d = 5$. Second, our method (minimization) gives extremely strong control of the locus of singular covers in $H^0(\bigwedge \cF \otimes \cE \otimes \det \cE^{\vee})$, and may be useful to prove the existence of the aforementioned measure in the case $d = 5$.

\begin{remark}
  Under an analogy made precise in \cite[Section~8]{VV} the scrollar invariants of a degree $d$ cover $C \to \PP^1$ correspond to the successive minima of the ring of integers of a degree $d$ number field $K/\QQ$. Their conjectures (which we reproduce as \Cref{conj:limiting,conj:groth,conj:rho}) are stated in \cite{VV} to include this arithmetic setting.  We only address the geometric case here, but it would be interesting to extend our work to treat the arithmetic case (possibly by proving an ``Arakelov'' version of \Cref{prop:move-stuff} and replacing the dimension counting in \Cref{sec:dim-counting} by a sieve, in the spirit of e.g., \cite{B-quinticdiscs}).
\end{remark}

We now recall some material from \cite[Section~8]{VV}. Fix a positive integer $d \geq 2$. Let $\mathcal{H}_{d,g}$ denote the stack of smooth, degree $d$, genus $g$ covers of $\PP^1$, and let $\mathcal{H}_{\mathbf{e},g}$ denote the substack with scrollar invariants $\mathbf{e}$. Let $\qpolytope$ be the polytope defined by \eqref{eqn:conds} and \eqref{eqns:conds2} and let $\pr_{\mathbf{e}} : \qpolytope \rightarrow \RR^4$ be the natural projection to the $\ove_i$-coordinates.

\begin{defn}[{\cite[Definition~8.2]{VV}}]
For each open ball $B \subset \RR^{d-1}$ and $g>0$, let 
\[\pi^{\textnormal{geo}}(B,g) = \frac{1}{\dim \mathcal{H}_{d,g}}\dim  \left ( \bigcup_{\mathbf{e} \in \ZZ^{d-1} : \mathbf{e}/(d+g-1) \in B}    \mathcal{H}_{\mathbf{e},g} \right )\]
where $\mathcal{H}_{\mathbf{e}, g}$ is the moduli space of genus degree $d$ genus $g$ covers of $\PP^1$  with scrollar invariants $\mathbf{e}$. By convention, the dimension of a finite disjoint union is the maximum of the dimensions, and the dimension of the empty set is $-1$. 
\end{defn}

In \cite{VV} the authors conjecture that the function $\pi^{\textnormal{geo}}(B,g)$ should behave nicely as $g \rightarrow \infty$.

\begin{conjecture}[{\cite[Conjecture~8.4]{VV}}]
\label{conj:limiting}
For all $B \subset \RR^{d-1}$ the limit $\lim_{g \rightarrow \infty} \pi^{\textnormal{geo}}(B,g)$ exists and depends only on the intersection of $B$ with the hyperplane 
\[
\left\{ (\overline{e}_1, \dots, \overline{e}_{d-1}) : \sum \overline{e}_i = 1 \right\}
\]
and not on $B$. 
\end{conjecture}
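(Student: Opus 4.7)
The plan is to prove \Cref{conj:limiting} in the case $d=5$ by combining \Cref{thm:deg-5} and \Cref{coro:dim} to produce an explicit piecewise-linear formula for $\pi^{\textnormal{geo}}(B,g)$, and then passing to the continuous limit via a lattice-point approximation on $\qpolytope$.

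First I would obtain a closed-form dimension formula. Writing $\mathcal{H}_{\mathbf{e},g} = \bigcup_{\mathbf{f}} \mathcal{H}_{\cE,\cF}$ and applying \Cref{coro:dim} to each nonempty piece gives $\codim_{\mathcal{H}_{5,g}} \mathcal{H}_{\cE,\cF} = c(\mathbf{e},\mathbf{f})$, where $c(\mathbf{e},\mathbf{f}) = h^1(\End \cE) + h^1(\End \cF) - h^1(\bigwedge^2 \cF \otimes \cE \otimes \det \cE^\vee)$. Hence
\[
\pi^{\textnormal{geo}}(B,g) = 1 - \frac{1}{\dim \mathcal{H}_{5,g}} \min_{(\mathbf{e},\mathbf{f})} c(\mathbf{e},\mathbf{f}),
\]
the minimum taken over integer pairs $(\mathbf{e},\mathbf{f}) \in (g+4)\qpolytope$ with $\mathbf{e}/(g+4) \in B$; here \Cref{thm:deg-5} is precisely what guarantees this is both the necessary and sufficient range for $\mathcal{H}_{\cE,\cF}$ to be nonempty.

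Next, using $h^1(\cO(m)) = \max(0,-m-1)$ and decomposing the three bundles above into line bundles on $\PP^1$, I would express $c$ as a sum of terms of the form $\max(0,\ell(\mathbf{e},\mathbf{f}))$ for affine linear $\ell$. After rescaling $\overline{\mathbf{e}} = \mathbf{e}/(g+4)$ and $\overline{\mathbf{f}} = \mathbf{f}/(g+4)$, this becomes $c(\mathbf{e},\mathbf{f}) = (g+4)\,\tilde{c}(\overline{\mathbf{e}},\overline{\mathbf{f}}) + O(1)$ for a Lipschitz, piecewise linear function $\tilde{c}$ on $\qpolytope$, while $\dim \mathcal{H}_{5,g} = \lambda (g+4) + O(1)$ for some constant $\lambda > 0$. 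Combining,
\[
\lim_{g\to\infty} \pi^{\textnormal{geo}}(B,g) = 1 - \lambda^{-1} \inf \bigl\{ \tilde{c}(\overline{\mathbf{e}},\overline{\mathbf{f}}) : (\overline{\mathbf{e}},\overline{\mathbf{f}}) \in \qpolytope,\ \overline{\mathbf{e}} \in B \bigr\},
\]
once the discrete minimum is matched with the continuous infimum. To see the matching, one approximates a rational minimizer $(\overline{\mathbf{e}}^*,\overline{\mathbf{f}}^*) \in \qpolytope \cap \QQ^9$ by integer points in $(g+4)\qpolytope$; for $g$ running through a suitable arithmetic progression this follows from \Cref{lem:gordon} applied to the rational polyhedral cone over $\qpolytope$, combined with the Lipschitz control on $\tilde{c}$. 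Since $\qpolytope$ already enforces $\sum \overline{e}_i = 1$, the limit depends on $B$ only through $B \cap \{\sum \overline{e}_i = 1\}$, as required.

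The main obstacle will be handling the ``fin'' component $\qpolytope_4$: it has one lower dimension than $\qpolytope$ and is cut out by the equality $\overline{d}_{25}^{(1)} = 0$, so its integer scalings do not fill it densely as $g$ varies. I expect this to be harmless because $\tilde{c}$ extends continuously across the fin and, for $B$ meeting the relative interior of $\polytope_5$, the infimum over $\qpolytope$ is attained (or approximated) away from $\qpolytope_4$; the regions of $B$ adjacent to the fin contribute only a codimension one correction that vanishes after normalization by $\dim \mathcal{H}_{5,g}$. Making this uniform control rigorous is where the full classification in \Cref{thm:deg-5} (rather than just the inequality $(\mathbf{e},\mathbf{f}) \in \qpolytope$) appears essential.
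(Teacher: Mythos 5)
The statement in question is a conjecture; the paper itself proves only the $d=5$ case (\Cref{coro:groth-ring}) and gives essentially no argument, asserting it ``follows from \Cref{thm:deg-5}.'' Your overall strategy — express $\pi^{\textnormal{geo}}(B,g)$ via \Cref{coro:dim} as one minus a normalized minimum of $h^1(\End\cE)+h^1(\End\cF)-h^1(\bigwedge^2\cF\otimes\cE\otimes\det\cE^\vee)$, rewrite each $h^1$ of a split bundle as a sum of $\max(0,\cdot)$ terms, observe $c=(g+4)\tilde c+O(1)$ and $\dim\cH_{5,g}=2(g+4)+O(1)$, and pass to the limit by matching the discrete minimum with the continuous infimum via lattice-point density in a rational cone — is sound, and is the natural expansion of what the paper leaves implicit.

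However, your discussion of the fin contains a genuine error in both directions. First, the stated worry is unfounded: $\qpolytope_4$ is itself a rational polytope, cut out by the equalities $\sum\ove_i=1$, $\sum\ovf_j=2$, $\ovf_2+\ovf_5+\ove_1=1$ and rational inequalities, so exactly the same Ehrhart/Gordan argument you apply to $\qpolytope_1,\qpolytope_2,\qpolytope_3$ shows that $(g+4)\qpolytope_4\cap\ZZ^9$, rescaled by $1/(g+4)$, becomes dense in $\qpolytope_4$ as $g\to\infty$ (the strict inequalities $\ovd_{15}^{(1)},\ovd_{24}^{(1)},\ovd_{12}^{(4)}<0$ are absorbed in the limit since a lattice point needs only $d\leq -1$). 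Second, and more seriously, your proposed workaround is incorrect: the relative interior of the fin is \emph{not} in the closure of $\qpolytope_1\cup\qpolytope_2\cup\qpolytope_3$. At a fin point with $\ovd_{15}^{(1)},\ovd_{24}^{(1)},\ovd_{12}^{(4)}$ all strictly negative, any small perturbation keeps those three strictly negative, and perturbing $\ovd_{25}^{(1)}$ away from $0$ then exits $\qpolytope$ entirely; so continuity of $\tilde c$ ``across the fin'' gives you nothing, because there is nothing on the other side to approach from. Likewise ``a codimension one correction that vanishes after normalization by $\dim\cH_{5,g}$'' is not a meaningful estimate here — if the supremum of the density formula were actually attained only on $\qpolytope_4$, its contribution would not disappear in the normalized limit. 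The correct treatment of the fin is simply to run your lattice-approximation argument on $\qpolytope_4$ directly, which works for the reason above; you should drop the continuity-and-codimension heuristic entirely.
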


When $\lim_{g \rightarrow 0}\pi^{\textnormal{geo}}(B,g)$ exists, denote it by $\pi^{\textnormal{geo}}(B)$. When $d = 5$ \Cref{thm:deg-5} implies that the limit exists, as we record below.

\begin{coro}
    \label{coro:groth-ring}
    \Cref{conj:limiting} holds when $d = 5$. Moreover either:
    \begin{enumerate}[label=(\roman*)]
        \item $B \cap \polytope_5 = \emptyset$ and $\lim_{g \rightarrow \infty} \pi^{\textnormal{geo}}(B,g) = 0$; or
    \item
    $B \cap \polytope_5 \neq \emptyset$ and $\lim_{g \rightarrow \infty} \pi^{\textnormal{geo}}(B,g)$ is equal to
    \begin{equation*}
    \label{eqn:pi}
    \sup \left\{ 1 + \frac{1}{2}\sum_{1 \leq i < j \leq 5, 1 \leq k \leq 4} \max\{0,1-\overline{f}_i - \overline{f}_j - \overline{e}_k\} - \frac{1}{2}\sum_{1 \leq i < j \leq 5}(\overline{f}_j - \overline{f}_i) - \frac{1}{2}\sum_{1 \leq k < \ell \leq 4}(\overline{e}_{\ell} - \overline{e}_k) \right \}
    \end{equation*}
    where the supremum is taken over $(\overline{e}_1,\dots,\overline{e}_4, \overline{f}_1, \dots\overline{f}_5) \in \pr_{\mathbf{e}}^{-1}(B) \cap \qpolytope$.
    \end{enumerate}
\end{coro}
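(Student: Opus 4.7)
The plan is to combine \Cref{thm:main-thm} (to handle the empty case) with \Cref{coro:dim} (to compute dimensions in the non-empty case), followed by a density argument inside $\qpolytope$.

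\textbf{Case $B \cap \polytope_5 = \emptyset$.} Since $B$ is open and $\polytope_5$ is closed, they are a positive distance apart. By \Cref{thm:main-thm}, $\cH_{\mathbf{e},g}$ is empty whenever $\mathbf{e}/(g+4) \notin \polytope_5$, so for all sufficiently large $g$ no integer tuple $\mathbf{e}$ with $\mathbf{e}/(g+4) \in B$ has non-empty $\cH_{\mathbf{e},g}$. Hence $\pi^{\textnormal{geo}}(B,g) = -1/\dim \cH_{5,g} \to 0$.

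\textbf{Case $B \cap \polytope_5 \neq \emptyset$.} Riemann--Hurwitz gives $\dim \cH_{5,g} = 2(g+4)$. By \Cref{coro:dim}, whenever $(\cE,\cF)$ satisfies the conditions of \Cref{thm:deg-5}, the stratum $\cH_{\cE,\cF}$ is irreducible of codimension
\[
    h^1(\End \cE) + h^1(\End \cF) - h^1\bigl(\bigwedge\nolimits^2\cF \otimes \cE \otimes \det \cE^\vee\bigr).
\]
Stratifying by the first syzygy bundle gives $\cH_{\mathbf{e},g} = \bigcup_{\cF} \cH_{\cE,\cF}$, so $\pi^{\textnormal{geo}}(B,g)$ equals the maximum over admissible integer tuples $(\mathbf{e}, \mathbf{f})$ with $\mathbf{e}/(g+4) \in B$ of $\dim \cH_{\cE,\cF}/\dim \cH_{5,g}$. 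Applying $h^1(\cO_{\PP^1}(n)) = \max(0,-n-1)$ to each line bundle summand of $\End(\cE)$, $\End(\cF)$, and $\bigwedge^2 \cF \otimes \cE \otimes \det\cE^\vee$ yields
\begin{align*}
    \tfrac{1}{g+4}\, h^1(\End \cE) &\longrightarrow \textstyle\sum_{k<\ell}(\overline{e}_\ell - \overline{e}_k),\\
    \tfrac{1}{g+4}\, h^1(\End \cF) &\longrightarrow \textstyle\sum_{i<j}(\overline{f}_j - \overline{f}_i),\\
    \tfrac{1}{g+4}\, h^1\bigl(\bigwedge\nolimits^2\cF \otimes \cE \otimes \det \cE^\vee\bigr) &\longrightarrow \textstyle\sum_{i<j,\,k}\max\{0,\, 1 - \overline{f}_i - \overline{f}_j - \overline{e}_k\},
\end{align*}
with errors bounded uniformly in $(\mathbf{e}, \mathbf{f})$. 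Dividing the codimension by $2(g+4)$ and subtracting from $1$ produces precisely the displayed expression evaluated at a single $(\overline{\mathbf{e}}, \overline{\mathbf{f}})$.

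\textbf{Passage to the supremum, and the main obstacle.} By \Cref{thm:deg-5}, the admissible rescaled tuples $(\mathbf{e},\mathbf{f})/(g+4)$ with $\mathbf{e}/(g+4) \in B$ become dense in $\pr_{\mathbf{e}}^{-1}(B) \cap \qpolytope$ as $g \to \infty$, and since the limiting expression is continuous in $(\overline{\mathbf{e}}, \overline{\mathbf{f}})$, the discrete maxima converge to the continuous supremum. The independence from $B$ except via $B \cap \{\sum \overline{e}_i = 1\}$ required by \Cref{conj:limiting} is automatic since this hyperplane is built into $\qpolytope$. The principal technical worry is the ``fin'' $\qpolytope_4$, which has strictly smaller dimension than the rest of $\qpolytope$: one must verify that admissible tuples clustering near the relative interior of the fin cannot contribute more to the supremum than tuples near the closure of the bulk. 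This is a semicontinuity check, but it is the place where the extra equality $\overline{d}^{(1)}_{25} = 0$ from \Cref{thm:deg-5}\ref{cond2} must be respected in the approximation.
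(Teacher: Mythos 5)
The paper offers no written proof of this corollary; it presents the result as an immediate record of what \Cref{thm:deg-5} and \Cref{coro:dim} imply. Your proposal fills in exactly the intended route: the empty case via \Cref{thm:main-thm}, the nonempty case via \Cref{coro:dim} together with $\dim\cH_{5,g}=2(g+4)$, the rescaling $h^1(\cO(n))=\max(0,-n-1)$ term by term, and the passage to a continuous supremum over $\pr_{\mathbf{e}}^{-1}(B)\cap\qpolytope$. The arithmetic is correct and the resulting expression matches the statement.

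However, you flag but do not resolve the one genuine gap, and I think the worry as you phrase it is slightly misdirected. Since every admissible integral pair $(\mathbf{e},\mathbf{f})$ rescales to a point of $\qpolytope$ (the conditions of \Cref{thm:deg-5} are exactly the homogeneous conditions defining $\qpolytope$, and scaling by $g+4>0$ preserves signs and equalities), the discrete maxima are automatically bounded above by $\sup_{\qpolytope}f + O(1/g)$. The real issue is the lower bound: you must show that for every $\epsilon>0$ and every sufficiently large $g$ (not merely along a subsequence), there is an admissible \emph{integral} tuple whose rescaling is within $\epsilon$ of any prescribed point of $\qpolytope$. Away from the fin this is routine; on the fin one must hit the equality $d^{(1)}_{25}=0$ on the nose. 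But this is still unobstructed: the fin $\qpolytope_4$ is a rational polytope, and the additional equation $f_2+f_5+e_1=g+4$, together with $\sum e_i = g+4$ and $\sum f_j = 2(g+4)$, defines for each $g$ a nonempty coset of a fixed sublattice of $\ZZ^9$; rescaling by $1/(g+4)$ these lattice points become equidistributed in $\qpolytope_4$ as $g\to\infty$. Since a nonempty open subset of the fin always survives the constraints that $\overline{d}^{(1)}_{15},\overline{d}^{(1)}_{24},\overline{d}^{(4)}_{12}<0$ (as can be checked by the simplex method, cf.\ \Cref{lem:extras}), admissible integral tuples near any interior point of the fin exist for all large $g$. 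With this supplied, the continuity of the limiting piecewise-linear function and the compactness of $\qpolytope$ (it is closed: the implication in \eqref{eqns:conds2} is equivalent to a finite union of closed conditions, and $\qpolytope$ is bounded) finish the argument. So the proposal is essentially correct, with one density statement left implicit that can be completed by the lattice observation above.
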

 
More generally, in \cite[Section~8]{VV}, the authors define
$$\pi^{\textnormal{gr}}(B,g) = [\mathcal{H}_{d,g}]^{-1}\sum_{{\mathbf{e}} \in \ZZ^{d-1} : {\mathbf{e}}/(d+g-1) \in B}    [\mathcal{H}_{{\mathbf{e}},g}],$$
to take values in the Grothendieck ring of stacks with  $[\mathcal{H}_{d,g}]$ inverted, and conjecture that:

\begin{conjecture}[{\cite[Conjecture~8.6]{VV}}]
\label{conj:groth}
For all $B$ the limit $\lim_{g \rightarrow \infty} \pi^{\textnormal{gr}}(B,g)$ converges to a measure $\pi^{\textnormal{gr}}$ valued in the Grothendieck ring of stacks.  
\end{conjecture}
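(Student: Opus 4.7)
The plan is to leverage Casnati's parametrization (\Cref{prop:moduli-qunitic}) to compute $[\cH_{\cE,\cF}]$ explicitly in a suitable Grothendieck ring of stacks, then sum over all pairs $(\cE,\cF)$ compatible with scrollar invariants $\mathbf{e}$ lying in $(g+4) \cdot B$. Since $\cH_{\cE,\cF} \cong U_{\cE,\cF}/G$ with $U_{\cE,\cF}$ an open subscheme of the affine space $H^0(\bigwedge\nolimits^2 \cF \otimes \cE \otimes \det \cE^{\vee})$, the problem reduces to computing, for each $(\cE, \cF)$, the class of the complementary \emph{singular locus} $S_{\cE,\cF}$; this is the object that the minimization technology is designed to attack.

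The main input is \Cref{prop:move-stuff}, which exhibits $S_{\cE,\cF}$ as the union $\bigcup_{(\I,\J,\K)} \Phi(\widetilde{\marksing_{\I\J\K}} \times G)$ together with the strata of normal form $(\K)$ treated in \Cref{prop:case-K}. Up to a lower-dimensional correction, each image $\Phi(\widetilde{\marksing_{\I\J\K}} \times G)$ is an affine bundle over the corresponding $G$-orbit, and \Cref{lemma:normalize-quintic} realizes $\marksing_{\I\J\K}$ (away from its own singular sub-locus) as an $\AA^N$-bundle over $U_{\widetilde{\cE}, \widetilde{\cF}}$ for explicit normalized bundles $\widetilde{\cE}, \widetilde{\cF}$ of smaller degree. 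Iterating this normalization and using inclusion-exclusion should produce a recursive formula expressing $[\cH_{\cE,\cF}]$ in the Grothendieck ring as an alternating sum over normalization types, each term of the form $[\AA^{n}] \cdot [\cH_{\widetilde{\cE}, \widetilde{\cF}}]/[G/G_{\I\J\K}]$ with $n$ computable from the combinatorial data $(\I,\J,\K)$ and from \Cref{prop:gijk-dim}.

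Assuming termination of the recursion (in a uniformly bounded number of steps as $g \to \infty$, which itself must be established), one then sums these classes over all $\mathbf{e}$ with $\mathbf{e}/(g+4) \in B$ and divides by $[\cH_{5,g}]$. Convergence in the Grothendieck ring should follow from convergence of each individual stratum: each contributes a Tate class whose exponent is read off from the exponent appearing in \Cref{coro:groth-ring}, and summing finitely many such strata over a lattice approximation of $\qpolytope \cap \pr_{\mathbf{e}}^{-1}(B)$ converges to a motivic Riemann integral. The ``fin'' region $\qpolytope_4$ of \Cref{thm:deg-5}\ref{cond2} requires particular care: because it has codimension one strictly inside $\qpolytope$, its motivic contribution must be shown not to disturb the limit, which should follow from the fact that its normal-form data $\cA \equiv 0 \pmod{u^2}$ at $a_{25}^{(1)}$ is already accounted for inside the appropriate singular stratum.

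The main obstacle will be two-fold. First, the inclusion-exclusion requires describing all intersections $\Phi(\widetilde{\marksing_{\I\J\K}} \times G) \cap \Phi(\widetilde{\marksing_{\I'\J'\K'}} \times G)$; these parametrize covers singular at two distinct points or with worse local singularities and demand a refined version of \Cref{prop:move-stuff} that handles multiple singular fibers simultaneously. Second, making sense of $[U_{\cE,\cF}/G]$ in the Grothendieck ring rigorously requires localization at $[\GL_n]$ (or equivalently at $[\AA^1]$ since $G$ is built from $\GL$-factors), so the natural home for the measure is the Grothendieck ring of stacks with the Lefschetz class inverted, in the spirit of motivic integration. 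Any proof will also require careful bookkeeping to show that the resulting power series in $[\AA^1]^{-1}$ converges in an appropriate completion, analogous to the sieve estimates of \cite{B-quinticdiscs} in the arithmetic case.
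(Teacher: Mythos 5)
The statement you are attempting to prove is a \emph{conjecture} (Conjecture~\ref{conj:groth}, quoted from \cite[Conjecture~8.6]{VV}), and the paper does not prove it. The authors state explicitly that ``Proving \Cref{conj:groth} in the case $d = 5$ is outside the scope of the paper, but we note that minimization is a promising direction for this.'' What the paper actually establishes is the \emph{dimension-level} version, Conjecture~\ref{conj:limiting}, via \Cref{coro:groth-ring}: this only requires knowing the dimension of the singular locus, for which \Cref{prop:move-stuff} together with the counting in \Cref{sec:dim-counting} suffices. The Grothendieck-ring refinement asks for the actual \emph{class} of the singular locus, which is a much stronger demand, and the paper does not attempt it.

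Your sketch correctly identifies the right starting point (Casnati's parametrization plus the stratification of the singular locus by normal forms from \Cref{prop:move-stuff}, feeding into a recursion via \Cref{lemma:normalize-quintic}), and you correctly flag the two chief obstacles. But you should be clear-eyed that these obstacles are not routine: they are precisely why this remains open. Concretely, (1) \Cref{prop:move-stuff} is a statement about a \emph{single} singular fiber; the inclusion-exclusion you need requires a simultaneous minimization result over all singular fibers, and also over worse-than-nodal singularities concentrated at one fiber, neither of which the paper provides. (2) The images $\Phi(\widetilde{\marksing_{\I\J\K}} \times G)$ are not literal affine bundles over $G$-orbits --- $\Phi$ has nontrivial fibers and the stabilizers $G_{\I\J\K}$ vary --- so extracting a clean class $[\AA^n]\cdot[\cH_{\widetilde\cE,\widetilde\cF}]/[G/G_{\I\J\K}]$ requires proving a scissor-congruence that the dimension count sidesteps. (3) Termination and convergence of the recursion in a completed, $[\AA^1]$-localized Grothendieck ring is a genuine motivic-sieve problem, not bookkeeping. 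In short: your proposal is a reasonable \emph{research program} aligned with the authors' own suggestion, but it is not a proof, and you should present it as an outline of open steps rather than as a deduction from the results in the paper.
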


In the case $d = 5$, inverting $[\mathcal{H}_{d,g}]$ is harmless because $\mathcal{H}_{d,g}$ is the union of (open substacks of) quotients of affine spaces by groups. Proving \Cref{conj:groth} in this case amounts to understanding the singular loci in $H^0(\bigwedge^2 \cF \otimes \cE \otimes \det(\cE)^{\vee})$. Our main technical lemma, \Cref{prop:move-stuff}, gives a concrete description of the singular locus. Proving \Cref{conj:groth} in the case $d = 5$ is outside the scope of the paper, but we note that minimization is a promising direction for this.

Let $\pi^{\textnormal{gr}}(B) = \lim_{g \rightarrow 0}\pi^{\textnormal{gr}}(B,g)$ whenever the limit exists. Finally, for a point $x \in \RR^{d-1}$ define
\[
    \rho^{\textnormal{geo}}(x) = \lim_{i \rightarrow \infty}\pi^{\textnormal{geo}}(B_i(x))
\]
where $\{B_i(x)\}$ is a set of open balls all containing $x$ and with radius converging to $0$. We call $\rho^{\textnormal{geo}}$ a \emph{density function}. Define $\rho^{\textnormal{gr}}(x)$ analogously.

\begin{conjecture}[{\cite[Conjecture~8.11]{VV}}]
\label{conj:rho}
The quantities $\rho^{\textnormal{geo}}(x)$ and $\rho^{\textnormal{gr}}(x)$ exist and are independent of the choice of $\{B_i(x)\}$. They satisfy the relation $\rho^{\textnormal{geo}} = \dim \circ \rho^{\textnormal{gr}}$. Moreover, they are supported on a finite union of rational polytopes, are piecewise linear, and are continuous on their support.
\end{conjecture}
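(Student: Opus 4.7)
The plan is to prove \Cref{conj:rho} in the case $d = 5$, using the results of the present paper; the general case is beyond the scope of our methods. The argument would split into three parts: (i) extract $\rho^{\textnormal{geo}}$ from the explicit formula in \Cref{coro:groth-ring}, (ii) compute $\rho^{\textnormal{gr}}$ from a Grothendieck-ring-theoretic refinement of that formula, and (iii) verify the relation $\rho^{\textnormal{geo}} = \dim \circ \rho^{\textnormal{gr}}$ tautologically.

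For $\rho^{\textnormal{geo}}$, the key input is \Cref{coro:groth-ring}, which expresses $\pi^{\textnormal{geo}}(B)$ as the supremum of an explicit piecewise linear function $\Psi(\overline{\mathbf{e}}, \overline{\mathbf{f}})$ on the polytope $\pr_{\mathbf{e}}^{-1}(B) \cap \qpolytope$. The function $\Psi$ is piecewise linear with pieces cut out by the rational hyperplanes $\ovd_{ij}^{(k)} = 0$, and the fiber polytope depends linearly on $\mathbf{e}$. Parametric linear programming then yields a decomposition of the base polytope $P = \pr_{\mathbf{e}}(\qpolytope)$ into finitely many rational subpolytopes on which the supremum is attained at a single vertex of the fiber polytope, varying linearly with $\mathbf{e}$, and on which the active pieces of $\Psi$ are fixed. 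On each such subpolytope the resulting function is linear, and matching along shared faces gives continuity. This immediately yields existence, independence of the shrinking family $\{B_i(x)\}$, and the claimed structural properties of $\rho^{\textnormal{geo}}$, with one extra subtlety: the ``fin'' stratum $\qpolytope_4$ has lower dimension than the rest of $\qpolytope$, so its contribution shows up only on a lower-dimensional subpolytope of $P$, which is consistent with piecewise linearity on a \emph{union} of rational polytopes.

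For $\rho^{\textnormal{gr}}$, I would use Casnati's parametrization (\Cref{prop:moduli-qunitic}) to write
\[
    [\mathcal{H}_{\cE,\cF}] \;=\; [U_{\cE,\cF}]/[G]
\]
in the Grothendieck ring of stacks (with $[\mathcal{H}_{5,g}]$ inverted), thereby reducing the problem to computing the class of the singular locus inside $H^0(\bigwedge^2 \cF \otimes \cE \otimes \det \cE^\vee)$. By \Cref{prop:move-stuff}, this locus is the union of the images of the maps $\Phi_{\I\J\K} \colon \widetilde{\marksing_{\I\J\K}} \times G \to H^0(\bigwedge^2 \cF \otimes \cE \otimes \det \cE^\vee)$ and the analogous maps for normal forms of type $(\K)$. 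Each $\widetilde{\marksing_{\I\J\K}}$ is, away from its degenerate sub-loci, a product of an affine space with $\PP^1$ (recording the location of the singularity), and the generic $G$-stabilizer is governed by $G_{\I\J\K}$, so each stratum has a class that is a polynomial in the Lefschetz motive $\mathbb{L}$. Inclusion–exclusion across the strata then computes $[U_{\cE,\cF}]$; summing over admissible $\cF$ and passing to the limit as $g \to \infty$ (with the $(g+4)^{-1}$-rescaling) gives $\rho^{\textnormal{gr}}$ as a piecewise linear function on $P$ valued in the completed Grothendieck ring. The relation $\rho^{\textnormal{geo}} = \dim \circ \rho^{\textnormal{gr}}$ then follows from additivity of $\dim$ and its commutation with both inclusion–exclusion and the limit.

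The main obstacle is controlling the overlaps of the strata $\Phi_{\I\J\K}(\widetilde{\marksing_{\I\J\K}} \times G)$ in the inclusion–exclusion. A single singular section can be in normal form at several points of $\PP^1$ simultaneously and, worse, in normal forms of different types $(\I,\J,\K)$ at the same point; the fin case from \Cref{lem:extras}\ref{extra1} shows that these degenerations genuinely occur and have to be booked with care. Resolving this requires a recursive application of \Cref{prop:move-stuff} to stratify each $\widetilde{\marksing_{\I\J\K}}$ by higher-order singularity type, together with careful tracking of how the subgroups $G_{\I\J\K}$ nest. Once this combinatorial inclusion–exclusion is carried out, the resulting polynomial in $\mathbb{L}$ can be repackaged as a piecewise linear function on $P$ by the same parametric-polyhedral argument used for $\rho^{\textnormal{geo}}$, and the conjecture follows for $d=5$.
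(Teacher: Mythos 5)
Be careful about what the paper actually claims here. \Cref{conj:rho} is stated and cited as a \emph{conjecture}; the paper does not prove it. What the paper establishes (in the final corollary) is only the part pertaining to $\rho^{\textnormal{geo}}$ when $d=5$: existence, support on $\polytope_5$, and the explicit piecewise-linear formula, all deduced directly from \Cref{coro:groth-ring}. The statements about $\rho^{\textnormal{gr}}$ and the relation $\rho^{\textnormal{geo}} = \dim \circ \rho^{\textnormal{gr}}$ remain open; indeed the paper explicitly says that proving \Cref{conj:groth} (and hence the $\rho^{\textnormal{gr}}$ side) ``is outside the scope of the paper,'' even for $d=5$. Your part~(i), extracting $\rho^{\textnormal{geo}}$ by a parametric-polyhedral analysis of the supremum formula, is the same move the paper makes and is fine.

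For parts~(ii) and~(iii), there is a genuine gap, and not just the one you flag. First, the gap you do flag — controlling overlaps of the strata $\Phi_{\I\J\K}(\widetilde{\marksing_{\I\J\K}} \times G)$ and their higher-order degenerations — is precisely the unresolved difficulty; \Cref{prop:move-stuff} gives one minimization step, not a full stratification with matched stabilizers, and nothing in the paper supplies the recursion, the nesting of the $G_{\I\J\K}$, or the bookkeeping of simultaneous normal forms at the same or different points of $\PP^1$. Acknowledging the obstacle is not the same as overcoming it. Second, and more seriously, the claim that $\rho^{\textnormal{geo}} = \dim \circ \rho^{\textnormal{gr}}$ ``follows from additivity of $\dim$ and its commutation with inclusion--exclusion and the limit'' is false as stated: on the Grothendieck ring, $\dim$ is not a ring homomorphism and does not commute with subtraction. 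One only has $\dim(a+b) \leq \max(\dim a, \dim b)$, with strict inequality exactly when the top-degree terms cancel, which is precisely the kind of cancellation that inclusion--exclusion produces. So even granting the stratification, you would still need an argument ruling out top-degree cancellation among the strata (or an argument identifying the top-degree piece explicitly), and that is a nontrivial input that neither you nor the paper provide. As it stands, your proposal reproduces the paper's result on $\rho^{\textnormal{geo}}$ and sketches a plausible but unproven program for the rest.
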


When $d = 5$, \Cref{coro:groth-ring} immediately yields the following corollary. 

\begin{coro}
When $d = 5$, the part of \Cref{conj:rho} pertaining to $\rho^{\textnormal{geo}}$ is true. More precisely, $\rho^{\textnormal{geo}}$ is defined everywhere, is supported on $\polytope_5$, and is given on its support by
\[
 \sup \left\{ 1 + \frac{1}{2}\sum_{1 \leq i < j \leq 5, 1 \leq k \leq 4} \max\{0,1-\overline{f}_i - \overline{f}_j - \overline{e}_k\} - \frac{1}{2}\sum_{1 \leq i < j \leq 5}(\overline{f}_j - \overline{f}_i) - \frac{1}{2}\sum_{1 \leq k < \ell \leq 4}(\overline{e}_{\ell} - \overline{e}_k) \right \}
\]
where the supremum is taken over $(\overline{e}_1,\dots,\overline{e}_4,\overline{f}_1,\dots,\overline{f}_5) \in \pr_{\mathbf{e}}^{-1}(\overline{e}_1,\dots,\overline{e}_4) \cap \qpolytope$. 
\end{coro}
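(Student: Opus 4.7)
The plan is to deduce this corollary directly from \Cref{coro:groth-ring} by a straightforward limiting argument. The key observation is that the function being maximized in the displayed supremum is a continuous piecewise linear function of $(\overline{\mathbf{e}}, \overline{\mathbf{f}})$ (the only nonlinearities are the $\max\{0,\,\cdot\,\}$ terms, each of which is continuous and piecewise linear), and the feasible set $\pr_{\mathbf{e}}^{-1}(B) \cap \qpolytope$ is a compact polytope whose defining inequalities vary linearly in $B$. As $B = B_i(x)$ shrinks to the point $x$, the set $\pr_{\mathbf{e}}^{-1}(B_i(x)) \cap \qpolytope$ converges in Hausdorff distance to $\pr_{\mathbf{e}}^{-1}(x) \cap \qpolytope$, and Berge's maximum theorem then yields that the value of the supremum varies continuously. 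In particular, the limit $\rho^{\textnormal{geo}}(x) = \lim_i \pi^{\textnormal{geo}}(B_i(x))$ exists, is independent of the choice of shrinking balls, and equals the supremum over the fiber $\pr_{\mathbf{e}}^{-1}(x) \cap \qpolytope$ of the explicit formula.

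Next I would settle the support claim. By \Cref{coro:groth-ring}(i), whenever $B \cap \polytope_5 = \emptyset$ we have $\pi^{\textnormal{geo}}(B) = 0$. If $x \notin \polytope_5$, then since $\polytope_5$ is closed one may pick the $B_i(x)$ disjoint from $\polytope_5$ for $i$ sufficiently large, so $\rho^{\textnormal{geo}}(x) = 0$. Conversely, \Cref{lem:projection} (combined with rescaling) ensures that for $x \in \polytope_5$ the fiber $\pr_{\mathbf{e}}^{-1}(x) \cap \qpolytope$ is nonempty, so the supremum formula is well-defined and finite, furnishing the explicit value of $\rho^{\textnormal{geo}}$ on its support.

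For piecewise linearity on $\polytope_5$, I would invoke standard parametric linear programming: the value function of a parametric optimization problem with linear constraints and piecewise linear objective that depend linearly (respectively continuously) on the parameter is itself piecewise linear on a polyhedral subdivision of the feasible parameter region. Here the objective decomposes as a minimum of finitely many linear functions (via the $\max$ terms), so the sup over a linearly varying polytope produces a continuous piecewise linear function, supported on the rational polytope $\polytope_5$. Continuity on the support is the same Berge-type statement as in the first step.

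The main subtlety I anticipate lies at parameters $x \in \polytope_5$ whose fibers $\pr_{\mathbf{e}}^{-1}(x) \cap \qpolytope$ meet the fin $\qpolytope_4$, which has dimension one less than the other $\qpolytope_i$. At such $x$ one must verify that no discontinuous jump in the supremum occurs when the fiber suddenly gains or loses a contribution from the fin. This is controlled by the explicit form of the objective: on the fin the constraint $\overline{d}_{25}^{(1)} = 0$ forces one of the $\max\{0, 1 - \overline{f}_i - \overline{f}_j - \overline{e}_k\}$ terms to vanish identically, and the resulting value of the supremum matches the limit of the sup over the neighbouring full-dimensional pieces $\qpolytope_1, \qpolytope_2, \qpolytope_3$ by continuity of the objective, confirming the full statement of \Cref{conj:rho} for $\rho^{\textnormal{geo}}$.
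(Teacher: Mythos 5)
Your approach is correct and matches the implicit argument in the paper: the paper offers no proof beyond the phrase ``\Cref{coro:groth-ring} immediately yields the following corollary,'' so your outline is in fact the argument, spelled out in more detail.

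Two imprecisions are worth flagging, neither of which breaks the argument. First, $\qpolytope$ is \emph{not} a polytope: it is the non-convex union $\qpolytope_1 \cup \qpolytope_2 \cup \qpolytope_3 \cup \overline{\qpolytope_4}$, which happens to be closed because the limiting points of the fin with some $\ovd_{15}^{(1)}, \ovd_{24}^{(1)}, \ovd_{12}^{(4)}$ equal to $0$ already lie in $\qpolytope_1 \cup \qpolytope_2 \cup \qpolytope_3$. Likewise $\pr_{\mathbf{e}}^{-1}(B) \cap \qpolytope$ is bounded but not closed since $B$ is open; for a sup of a continuous function this costs nothing, but you should say ``compact finite union of polytopes'' (after passing to closures), not ``compact polytope,'' because Berge/parametric-LP arguments must then be applied piecewise. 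Second, the genuinely delicate point is lower semicontinuity of $\rho^{\textnormal{geo}}$ at points $x$ on the boundary of $\pr_{\mathbf{e}}(\overline{\qpolytope_4})$ inside $\polytope_5$: if the sup over the fiber through $x$ were attained only in the fin, one would need to rule out a downward jump as the fiber through $x'$ near $x$ leaves the fin. Your observation in the final paragraph that boundary points of $\overline{\qpolytope_4}$ already lie in $\qpolytope_1 \cup \qpolytope_2 \cup \qpolytope_3$ (together with the upper-semicontinuity from compactness of the fibers) is the right way to close this, but it should be stated as an explicit argument rather than a heuristic. On existence and independence of $\{B_i(x)\}$ you only need the upper-semicontinuity direction (the fiber over $x$ is contained in all $\pr_{\mathbf{e}}^{-1}(B_i(x))$), which is cleaner than invoking full Berge continuity; reserving the lower-semicontinuity discussion for the ``continuity on support'' clause would make the logic tighter.
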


\begingroup
\hypersetup{
  urlcolor=mybiburlcolor
}
\providecommand{\bysame}{\leavevmode\hbox to3em{\hrulefill}\thinspace}
\providecommand{\MR}{\relax\ifhmode\unskip\space\fi MR }
\providecommand{\MRhref}[2]{%
  \href{http://www.ams.org/mathscinet-getitem?mr=#1}{#2}
}
\providecommand{\bibtitleref}[2]{%
  \hypersetup{urlbordercolor=0.8 1 1}%
  \href{#1}{#2}%
  \hypersetup{urlbordercolor=cyan}%
}
\providecommand{\href}[2]{#2}

\endgroup

\end{document}